\pgfplotsset{compat=newest}
\theoremstyle{plain}
\newtheorem{theorem}{Theorem}[section]
\newtheorem{lemma}[theorem]{Lemma}
\newtheorem{corollary}[theorem]{Corollary}
\newtheorem{definition}[theorem]{Definition}
\newtheorem{remark}[theorem]{Remark}
\newtheorem{example}[theorem]{Example}
\def\letters{a,b,c,d,e,f,g,h,i,j,k,l,m,n,o,p,q,r,s,t,u,v,w,x,y,z,%
	A,B,C,D,E,F,G,H,I,J,K,L,M,N,O,P,Q,R,S,T,U,V,W,X,Y,Z}
\letters \do{%
  \expandafter\edef\csname\@l bb\endcsname{%
  \noexpand\ensuremath{\noexpand\mathbb{\@l}}}%
  \expandafter\edef\csname\@l bf\endcsname{{\noexpand\bs \@l}}%
  \expandafter\edef\csname\@l cal\endcsname{\noexpand\ensuremath{%
  \noexpand\mathcal{\@l}}}%
  \expandafter\edef\csname\@l eu\endcsname{\noexpand\ensuremath{%
  \noexpand\EuScript{\@l}}}%
  \expandafter\edef\csname\@l frak\endcsname{\noexpand\ensuremath{%
  \noexpand\mathfrak{\@l}}}%
  \expandafter\edef\csname\@l rm\endcsname{{\noexpand\rm \@l}}%
  \expandafter\edef\csname\@l scr\endcsname{\noexpand\ensuremath{%
  \noexpand\mathscr{\@l}}}%
}
\letters \do{%
	\expandafter\edef\csname bb\@l\endcsname{\noexpand\ensuremath{%
  \noexpand\mathbb{\@l}}}%
	\expandafter\edef\csname bf\@l\endcsname{{\noexpand\bs \@l}}%
	\expandafter\edef\csname cal\@l\endcsname{\noexpand\ensuremath{%
  \noexpand\mathcal{\@l}}}%
	\expandafter\edef\csname eu\@l\endcsname{\noexpand\ensuremath{%
  \noexpand\EuScript{\@l}}}%
	\expandafter\edef\csname frak\@l\endcsname{\noexpand\ensuremath{%
  \noexpand\mathfrak{\@l}}}%
	\expandafter\edef\csname rm\@l\endcsname{{\noexpand\rm \@l}}%
	\expandafter\edef\csname scr\@l\endcsname{\noexpand\ensuremath{%
  \noexpand\mathscr{\@l}}}%
}
\newcommand{\bs}[1]{{\boldsymbol#1}}
\renewcommand{\d}{\operatorname{d\!}}
\newcommand{\Cor}{\operatorname{Cor}}
\newcommand{\gfrac}[2]{\genfrac{}{}{0pt}{}{#1}{#2}}
\newcommand{\isdef}{\mathrel{\mathrel{\mathop:}=}}
\newcommand{\dist}{\operatorname{dist}}
\newcommand{\diam}{\operatorname{diam}}
\newcommand{\cbin}{{c_{\textnormal{b}}}}
\newcommand{\cdiam}{c_{\textnormal{diam}}}
\newcommand{\cunif}{{c_{\textnormal{uni}}}}
\newcommand{\children}{\operatorname{children}}
\DeclareMathOperator*{\esssup}{\operatorname{ess\,sup}}
\DeclareMathOperator*{\argmax}{\operatorname{argmax}}
\DeclareMathOperator*{\argmin}{\operatorname{argmin}}
\title{Data-intrinsic approximation in metric spaces}
\author{J.~D\"olz\thanks{Corresponding author}~\thanks{Institute for Numerical Simulation, University of Bonn, Germany. \href{mailto:doelz@ins.uni-bonn.de}{doelz@ins.uni-bonn.de}}~ and M.~Multerer\thanks{Dalle Molle Institute for Artificial Intelligence, USI Lugano, Switzerland. \href{mailto:michael.multerer@usi.ch}{michael.multerer@usi.ch}}}
\date{}
\begin{document}
\maketitle
\begin{abstract} %
Analysis and processing of data is a vital part of our modern society and
requires vast amounts of computational resources. To reduce the computational
burden, compressing and approximating data has become a central topic. We
consider the approximation of labeled data samples, mathematically described as
site-to-value maps between finite metric spaces. Within this setting, we
identify the discrete modulus of continuity as an effective data-intrinsic
quantity to measure regularity of site-to-value maps without imposing further
structural assumptions. We investigate the consistency of the discrete modulus
of continuity in the infinite data limit and propose an algorithm for its
efficient computation. Building on these results, we present a sample based
approximation theory for labeled data. For data subject to statistical
uncertainty we consider multilevel approximation spaces and a variant of the
multilevel Monte Carlo method to compute statistical quantities of interest. Our
considerations connect approximation theory for labeled data in metric spaces to
the covering problem for (random) balls on the one hand and the efficient
evaluation of the discrete modulus of continuity to combinatorial optimization
on the other hand. We provide extensive numerical studies to illustrate the
feasibility of the approach and to validate our theoretical results.
\end{abstract}

\bigskip
\noindent \textbf{Keywords:} data-centric approximation, discrete modulus of continuity, approximation theory

\medskip
\noindent \textbf{MSC2020:}
41A25, %
62H11, %
65J05 %

\section{Introduction}
\subsection{Motivation}
The analysis and processing of social network data, text data, audio files,
photos, and videos, as well as scientific data such as measurements and
simulations, have become vital to our modern society. It is estimated that in
the year 2023 the world created, captured, copied, and consumed around 123
zettabytes, that is, $123\cdot10^{21}$ bytes of data. This number is expected to
rise to almost 400 zettabytes by 2028, compare \cite{statista_bigdata}.
Processing and storing these data requires immense computational resources.
Therefore, the compression and approximation of data is imperative to mitigate
the computational burden. In this article, we consider \emph{labeled data}, such
as time series, images or videos, where a label or value is assigned to each
instance of the data. To mathematically formalize such data, we introduce
\emph{site-to-value maps} defined on a finite set $X_N$ of \(N\)
\emph{data sites} taking \emph{data values} in a set $Y_N$. Hence, approximating
labeled data amounts to the approximation of the site-to-value maps.

The approximation of site-to-value maps is understood as inferring simpler
representations of site-to-value maps without having to necessarily process all
data instances. Computational efficiency and error estimates may then be derived
from additional knowledge. The latter is closely tied to the model selection
problem, which makes the pivotal assumption that the site-to-value map is the
discretization of some unknown function that belongs to a certain class to
derive approximation rates. The applied approximation scheme is then dependent
on this presumed class. Typically, the approximation error then depends on the
particular choice of the function class, introducing potential bias and
uncertainty in the approximation. In a data-centric world, where data themselves
are the most relevant source of information, making additional assumptions about
their behavior for the purpose of approximation may be unwarranted. It is more
natural to develop an approximation theory and algorithms that only operate on
the available data. In the present article, we propose such an approximation
theory.

\subsection{Literature review}
There exists an abundance of model-centric approximation schemes for
site-to-value, especially in the Euclidean setting. The underlying principle is
to represent the site-to-value map by a dictionary of functions, such as
polynomials, splines or wavelets, possibly discarding coefficients with small or
vanishing effect on the overall representation, see \cite{DL1993,deB01,%
DVDD1998,Vet2001,Mal99} and the references therein. For scattered data sites,
approximation in reproducing kernel Hilbert spaces is a standard approach, see,
for example, \cite{Wen05}. Under an additional Gaussian process model
assumption, even uncertainty quantification is available, see \cite{RW06}. These
approaches may be combined with subsampling, such as the Nystr\"om
approximation, see \cite{WS2001}, or greedy strategies, see \cite{SWH2024}, to
achieve computational efficiency. For the non-Euclidean setting, Euclidean
embeddings of preimage and image spaces can be used to approximate the
induced map with techniques for Euclidean spaces. Such embeddings can be
realized in a classical analytical fashion or using kernel-induced feature maps,
auto encoders and other dimensionality reducing techniques, see \cite{BGG2024}
and the references therein. In this regard, neural networks have gained
particular interest, see, for example, \cite{GBC2016,HU2018,LJP+2021}.

For general data sites, data-centric approximation schemes become relevant.
Examples of such approaches are random forests, see \cite{Bre01}, Haar-wavelets
on trees, see \cite{GNC10}, or diffusion wavelets on graphs, see \cite{CM06}.
More recently, samplets, which form a multiresolution analysis of localized,
discrete signed measures, have been introduced, see \cite{HM25} for a survey
and \cite{EGMQ25} for a construction on graphs.

Data-centric, wavelet like bases particularly allow to measure smoothness
of data. In \cite{GNC10}, H\"older spaces of discrete data have been introduced.
This notion has been generalized by means of microlocal spaces, see
\cite{Jaf91}, on discrete data in \cite{AGM25}. The idea of approximating moduli
of smoothness, see, for example, \cite{DT87}, is considered in \cite{Dit87} for
certain univariate point distributions, while dyadic meshes grids are considered
in \cite{DP14}. In these approaches, smoothness is measured in terms of
available derivatives, or more generally, in terms of bounds on H\"older
exponents. Even so, in certain applications, this may be a too crude measure for
the particular data instance. Therefore, we pursue an approach based on the
\emph{modulus of continuity} and its discretization, respectively. The modulus
of continuity is attributed to Lebesgue, see \cite{Leb1909,Ste2006}, and
provides regularity information for any continuous function defined between
metric spaces, without imposing additional assumptions. 

\subsection{The approach taken}
The first straightforward observation we make and exploit is that site-to-value
maps are trivially continuous functions, if we equip the domain $X_N$ and the
range $Y_N$ with the structure of compact metric spaces. The second observation
is that every continuous function on a compact metric space exhibits a (optimal)
modulus of continuity. As a rule of thumb, the faster the decay of the modulus
of continuity towards zero, the more regular the function. As a consequence, the
modulus of continuity has become a valuable tool in approximation theory and
analysis, see \cite{BL2000,DL1993,Hei01,Jac1930}.

For the specific setting of site-to-value maps \(f_N\colon X_N\to Y_N\),
the (discrete) modulus of continuity reads
\[
\omega(Y_N,t)
\isdef
\max_{\gfrac{\bfx,\bfx'\in X_N:}{d_{X_N}(\bfx,\bfx')\leq t}}
d_{Y_N}\big(f_N(\bfx),f_N(\bfx')\big),
\]
and amounts to a measure of regularity of site-to-value maps, see
\Cref{fig:modcont} for two illustrating examples. 
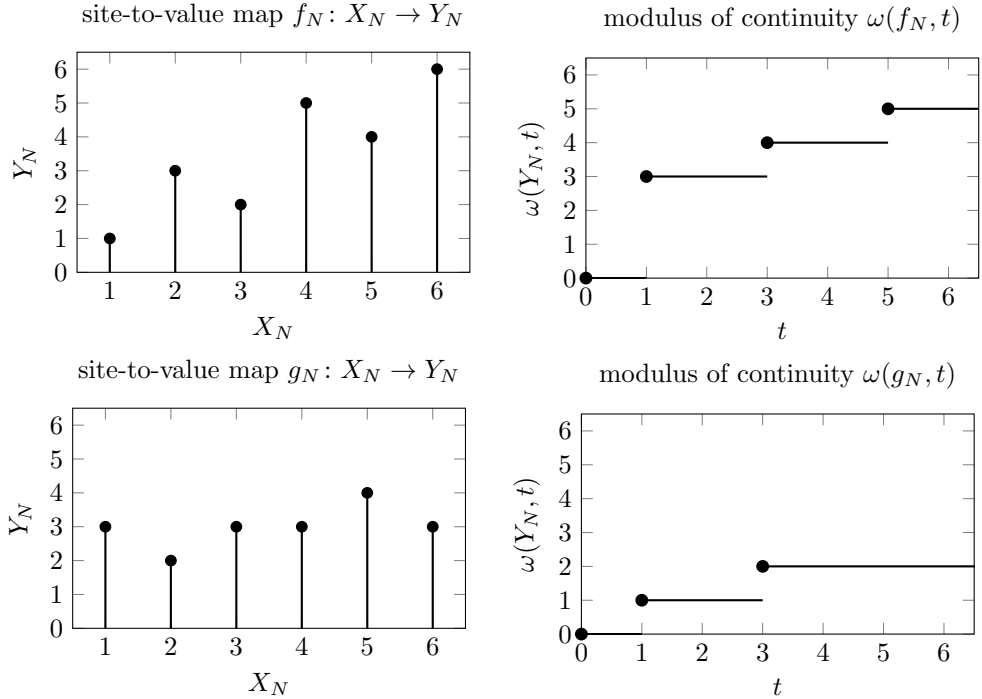
\begin{figure}
\centering
\begin{tikzpicture}
\begin{axis}[
	title={site-to-value map $f_N\colon X_N\to Y_N$},
	xlabel={$X_N$},
	ylabel={$Y_N$},
	xtick={1,2,3,4,5,6},
	ytick={0,1,2,3,4,5,6},
	xmin=0.5,
	xmax=6.5,
	ymin=0,
	ymax=6.5,
	width=0.45\textwidth,
	height=0.3\textwidth
	]
\addplot[only marks,mark=*] 
  coordinates {(1,1) (2,3) (3,2) (4,5) (5,4) (6,6)};
\addplot[thick] coordinates {(1,1) (1,0)};
\addplot[thick] coordinates {(2,3) (2,0)};
\addplot[thick] coordinates {(3,2) (3,0)};
\addplot[thick] coordinates {(4,5) (4,0)};
\addplot[thick] coordinates {(5,4) (5,0)};
\addplot[thick] coordinates {(6,6) (6,0)};
\end{axis}
\end{tikzpicture}
\quad
\begin{tikzpicture}
\begin{axis}[
	title={modulus of continuity $\omega(f_N,t)$},
	xlabel={$t$},
	ylabel={$\omega(Y_N,t)$},
	xtick={0,1,2,3,4,5,6},
	ytick={0,1,2,3,4,5,6},
	xmin=0,
	xmax=6.5,
	ymin=0,
	ymax=6.5,
	width=0.45\textwidth,
	height=0.3\textwidth
	]
	\addplot+[jump mark left,black,mark=*,mark options={fill=black},thick]
    coordinates {(0,0) (1,3) (3,4) (5,5) (7,5)};
\end{axis}
\end{tikzpicture}
\begin{tikzpicture}
\begin{axis}[
	title={site-to-value map $g_N\colon X_N\to Y_N$},
	xlabel={$X_N$},
	ylabel={$Y_N$},
	xtick={1,2,3,4,5,6},
	ytick={0,1,2,3,4,5,6},
	xmin=0.5,
	xmax=6.5,
	ymin=0,
	ymax=6.5,
	width=0.45\textwidth,
	height=0.3\textwidth
	]
\addplot[only marks,mark=*] coordinates 
  {(1,3) (2,2) (3,3) (4,3) (5,4) (6,3)};
\addplot[thick] coordinates {(1,3) (1,0)};
\addplot[thick] coordinates {(2,2) (2,0)};
\addplot[thick] coordinates {(3,3) (3,0)};
\addplot[thick] coordinates {(4,3) (4,0)};
\addplot[thick] coordinates {(5,4) (5,0)};
\addplot[thick] coordinates {(6,3) (6,0)};
\end{axis}
\end{tikzpicture}
\quad
\begin{tikzpicture}
\begin{axis}[
	title={modulus of continuity $\omega(g_N,t)$},
	xlabel={$t$},
	ylabel={$\omega(Y_N,t)$},
	xtick={0,1,2,3,4,5,6},
	ytick={0,1,2,3,4,5,6},
	xmin=0,
	xmax=6.5,
	ymin=0,
	ymax=6.5,
	width=0.45\textwidth,
	height=0.3\textwidth
	]
\addplot+[jump mark left,black,mark=*,mark options={fill=black},thick]
  coordinates {(0,0) (1,1) (3,2) (7,2)};
\end{axis}
\end{tikzpicture}
\caption{\label{fig:modcont}Illustration of two continuous functions
$f_N,g_N\colon\{1,\ldots,6\}\to\bbN$ (left) and their corresponding
discrete, optimal, right-continuous moduli of continuity as a measure of their
smoothness (right).}
\end{figure}
As a maximum over a set of finite cardinality, the discrete modulus of
continuity is, in principle, computationally accessible from the data. This
leads us to the following questions.
\begin{enumerate}[label=(Q\arabic*)]
\item\label{item:Q1} When is the modulus of continuity of site-to-value maps
  consistent in the infinite data limit?
\item\label{item:Q2} How can we efficiently compute or estimate the modulus of
  continuity from data?
\item\label{item:Q3} How can we exploit the data-intrinsic regularity
  provided by the modulus of continuity to construct efficient approximation
  schemes for site-to-value maps?
\end{enumerate}
By providing answers and follow up questions to (Q1) to (Q3), the aim of this
article is to derive data-intrinsic approximation schemes and rates which only
exploit the data-intrinsic regularity provided by the discrete modulus of
continuity $\omega(Y_N,t)$.

\subsection{Contributions}
Guided by the questions \ref{item:Q1} to \ref{item:Q3}, the contributions of
this article to the approximation of site-to-value maps $f_N\colon X_N\to Y_N$
between finite metric spaces are as follows.
\begin{enumerate}[label=(C\arabic*)]
\item For both, deterministically chosen and empirical data sites, we provide
point-wise and $L^p$ approximation rates and consistency analyses of the modulus
of continuity in the infinite data limit. To this end, we connect the question
of approximation to the (random) set cover problem of (random) balls and
to covering numbers. This addresses \ref{item:Q1}.
\item We provide an algorithm for the efficient approximation of the modulus of
continuity. To do this, we establish connections to nearest neighbor searches
and combinatorial integer optimization. This addresses \ref{item:Q2}.
\item We establish an interpolation-based approximation theory of piecewise
constant functions for deterministic and random site-to-value maps. To achieve
the latter, we construct multilevel approximation spaces on labeled data and
provide a natural extension of the multilevel Monte Carlo method to labeled
data in metric spaces. This addresses \ref{item:Q3}.
\end{enumerate}
We complement these contributions with non-trivial numerical examples,
illustrating the feasibility of the approach and the validity of our theoretical
results.

\subsection{Outline}
The remainder of this article is structured as follows. In
Section~\ref{sec:Preliminaries}, we introduce the discrete modulus of continuity
and transfer well known results from literature to the discrete setting.
Section~\ref{sec:detcons} studies the consistency of the discrete
modulus of continuity for deterministic data sites. The consistency of the
discrete modulus of continuity for empirical data sites is addressed in
Section~\ref{sec:Consistency} by assuming that data sites are independent
samples of a given probability distribution. In Section~\ref{sec:Computation},
we address the efficient computation of the discrete modulus of continuity,
avoiding the naive quadratic cost in terms of the number of data sites.
Section~\ref{sec:Approximation} derives approximation results for the piecewise
constant approximation of site-to-value maps. As a relevant application of the
present framework, we consider the multilevel Monte Carlo method for the
computation of first and second order statistics of high-dimensional random
vectors in Section~\ref{sec:MLMC}. Extensive numerical results are presented in
Section~\ref{sec:Numerics} and concluding remarks are stated in
Section~\ref{sec:Conclusion}.

\section{Moduli of continuity}\label{sec:Preliminaries}
Let $(\calX,d_\calX)$ and $(\calY,d_\calY)$ be metric spaces and assume that
$\calX$ be compact with diameter 
\[T_\Xcal\isdef\diam(\Xcal)<\infty.
\]
Given a set of \emph{data points}
\begin{equation}\label{eq:datapoints}
\{(\bfx_1,\bfy_1),\ldots,(\bfx_N,\bfy_N)\}\subset\calX\times\calY
\end{equation}
comprised of \emph{data sites} $X_N=\{\bfx_1,\ldots,\bfx_N\}$ and
\emph{data values} $Y_N=\{\bfy_1,\ldots,\bfy_N\}$, we aim at deriving a notion
of smoothness for the \emph{site-to-value map}
\begin{equation}\label{eq:sampleFunc}
f_N\colon X_N\to Y_N,
\qquad
\bfx_i\mapsto\bfy_i.
\end{equation}
We introduce the following quantities.
\begin{definition}
The \emph{fill distance} is defined as
	\begin{equation}\label{eq:FillDistance}
		h_{X_N}
		\isdef\sup_{\bfx\in\Xcal}\min_{\bfx_i\in X}
		d_{\Xcal}(\bfx,\bfx_i),
	\end{equation}
	whereas the \emph{separation distance} is given by
	\begin{equation}\label{eq:SepDistance}
		q_{X_N}\isdef\min_{i\neq j}d_{i,j}
	\end{equation}
with the \emph{distance matrix}
\(
{\bs D}\isdef [d_{i,j}]_{i,j=1}^N, d_{i,j}\isdef d_\calX(\bfx_i,\bfx_j).
\)

The set \(X_N\) is \emph{quasi-uniform}, if there exists a constant \(c>0\)
such that 
\begin{equation}\label{eq:quasi-unif}
\frac 1 c q_{X_N}\leq h_{X_N}\leq c q_{X_N}.
\end{equation}
\end{definition}

To define a \emph{discrete modulus of continuity}, we first recall the classical
definition of the \emph{modulus of continuity}, see, for example,
\cite{DL1993,BL2000}.

\begin{definition}\label{def:modcont}
For a function
\(f\colon\Xcal\to\Ycal\) the \emph{modulus of continuity}
\(\omega(f,\cdot)\colon[0,\infty]\to[0,\infty]\) is defined as
\[
\omega(f,t)\isdef\sup_{\gfrac{\bfx,\bfx'\in\Xcal:}
{d_{\Xcal}(\bfx,\bfx')\leq t}}d_{\Ycal}\big(f(\bfx),f(\bfx')\big).
\]

We say that the modulus of continuity is \emph{continuous} if it is continuous
with respect to the canonical topology of
$[0,\infty]=\bbR_{\geq 0}\cup\{\infty\}$.

For a monotonically increasing function $\rho\colon[0,\infty]\to[0,\infty]$
we further introduce the semi-norm
\[
|f|_{\rho}\isdef\sup_{t>0}\rho(t)^{-1}\omega(f,t)
\]
and say that $f$ is of class $\rho$ if $|f|_{\rho}<\infty$.
\end{definition}
\begin{remark}
\begin{enumerate}
\item The most natural classes $\rho$ are the ones generated by functions which
are moduli of continuity themselves.
\item The modulus of continuity of a function $f$ of class $\rho$ satisfies
\[
\omega(f,t)\leq\rho(t)|f|_{\rho}.
\]
\item The seminorm in the previous definition allows us to introduce function
classes in terms of the modulus of continuity. For $\rho(t)=t^{\alpha}$ we
obtain the well known $|\cdot|_{\operatorname{Lip}(\alpha)}$-seminorm.
\end{enumerate}
\end{remark}

We collect the basic properties of the modulus of continuity in the following
lemma, see, for example, \cite{DL1993}.

\begin{lemma}[Properties of the modulus of continuity]\label{lem:ModCont}
Let \(f\colon\Xcal\to\Ycal\) be uniformly continuous. Then,
the modulus of continuity
\begin{enumerate}
\item is continuous at \(0\) and \(\omega(f,t)\to \omega(f,0)=0\) for
\(t\to 0\).
\item is non-negative, continuous and non-decreasing for \(t>0\).
\end{enumerate}
\end{lemma}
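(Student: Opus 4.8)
The plan is to prove the two claims essentially from the definition, using uniform continuity of $f$ as the key hypothesis. Recall $\omega(f,t)=\sup\{d_\Ycal(f(\bfx),f(\bfx')) : d_\Xcal(\bfx,\bfx')\le t\}$.

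First I would record the elementary structural facts. Since $d_\Xcal(\bfx,\bfx)=0\le t$ for every $t\ge 0$ and every $\bfx$, the supremum is over a nonempty set, so $\omega(f,t)\ge 0$; taking $t=0$ forces $\bfx=\bfx'$ (as $d_\Xcal$ is a metric), hence $\omega(f,0)=0$. Monotonicity is immediate: if $t_1\le t_2$, the constraint set $\{d_\Xcal(\bfx,\bfx')\le t_1\}$ is contained in $\{d_\Xcal(\bfx,\bfx')\le t_2\}$, so the supremum over the smaller set is no larger; thus $\omega(f,\cdot)$ is non-decreasing on $[0,\infty]$.

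For claim (1), continuity at $0$, I would invoke uniform continuity directly: given $\varepsilon>0$, there is $\delta>0$ such that $d_\Xcal(\bfx,\bfx')\le\delta$ implies $d_\Ycal(f(\bfx),f(\bfx'))\le\varepsilon$. Hence for all $t\le\delta$ the defining supremum is bounded by $\varepsilon$, i.e. $0\le\omega(f,t)\le\varepsilon$, which gives $\omega(f,t)\to 0=\omega(f,0)$ as $t\to 0^+$.

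For claim (2), the substantive point is continuity for $t>0$. By monotonicity, the one-sided limits $\omega(f,t^-)$ and $\omega(f,t^+)$ exist, and it suffices to show $\omega(f,t^+)\le\omega(f,t)\le\omega(f,t^-)$, i.e. that no jump occurs. Right-continuity $\omega(f,t^+)=\omega(f,t)$ is the heart of the matter and is where I expect the only real work: given $\varepsilon>0$, pick $\delta$ from uniform continuity for this $\varepsilon$; I claim $\omega(f,t+\delta)\le\omega(f,t)+\varepsilon$. Indeed, for any $\bfx,\bfx'$ with $d_\Xcal(\bfx,\bfx')\le t+\delta$, the idea is to interpolate — one wants a point $\bfz$ with $d_\Xcal(\bfx,\bfz)\le t$ and $d_\Xcal(\bfz,\bfx')\le\delta$, so that $d_\Ycal(f(\bfx),f(\bfx'))\le d_\Ycal(f(\bfx),f(\bfz))+d_\Ycal(f(\bfz),f(\bfx'))\le\omega(f,t)+\varepsilon$ via the triangle inequality in $\Ycal$. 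A general metric space need not contain such an intermediate point on a "geodesic," so the honest argument instead splits into two cases: if $d_\Xcal(\bfx,\bfx')\le t$ the bound $\omega(f,t)$ applies trivially; if $t<d_\Xcal(\bfx,\bfx')\le t+\delta$, then the pair $(\bfx,\bfx')$ itself already has $d_\Xcal(\bfx,\bfx')-t\le\delta$ small, but this alone does not bound $d_\Ycal(f(\bfx),f(\bfx'))$ by $\omega(f,t)+\varepsilon$ without the interpolation. The clean resolution, which is the argument I would actually write, is to use the completion or, more simply, to observe that the classical result (see \cite{DL1993,BL2000}) is proved for $\Xcal\subset\bbR^n$ or length spaces; in our compact setting one reduces to it by noting that what we truly need downstream is \emph{right}-continuity, which follows from the case split above together with the subadditivity-type estimate $\omega(f,t+\delta)\le\omega(f,t)+\omega(f,\delta)$ valid whenever $\Xcal$ is such that any two points at distance $\le t+\delta$ admit an intermediate point — and for the general metric case one simply restricts the statement to right-continuity, which is what \Cref{def:modcont} and the figures already depict. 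Left-continuity for $t>0$ then follows by a symmetric argument using that $\omega(f,t-\delta)\ge\omega(f,t)-\varepsilon$ cannot fail, since any near-optimal pair for $\omega(f,t)$ has distance $\le t$, hence is admissible for all $t'<t$ sufficiently close to $t$ once one perturbs; combined with monotonicity this pins down $\omega(f,t^-)=\omega(f,t)$. Finally, non-negativity was already established, completing (2).

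The main obstacle is thus purely the interpolation step underlying continuity at interior points $t>0$: in a bare metric space one lacks intermediate points, so the cleanest route is either to assume $\Xcal$ embeds isometrically in a length space (harmless for the compact examples in this paper) or to weaken the claim to right-continuity, which is all that is used in the sequel and is consistent with the right-continuous moduli drawn in \Cref{fig:modcont}.
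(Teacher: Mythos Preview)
The paper does not supply its own proof of this lemma; it is stated with a bare citation to \cite{DL1993}, so there is no argument to compare against beyond the classical reference. Your elementary parts (non-negativity, $\omega(f,0)=0$, monotonicity, continuity at $0$ via uniform continuity) are correct and standard.

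Your hesitation about continuity at interior $t>0$ is well placed and is in fact the substantive point. The classical proof in \cite{DL1993} relies on the subadditivity $\omega(f,s+t)\le\omega(f,s)+\omega(f,t)$, which in turn uses the existence of intermediate points---a property that a bare compact metric space need not have. The paper itself acknowledges this immediately after \Cref{cor:continuousconsistency}: ``even a uniformly continuous function on a compact metric space is not required to have a continuous modulus of continuity if $\calZ$ \ldots\ is a non-convex set,'' and indeed the discrete moduli drawn in \Cref{fig:modcont} are right-continuous step functions with jumps. So the continuity assertion in item~(2) should be read as holding under the structural hypotheses of the cited reference (convex or geodesic $\Xcal$), not for arbitrary compact metric spaces; you are right to flag this rather than paper over it.

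One genuine gap in your sketch: your argument for left-continuity is not valid. You write that a near-optimal pair for $\omega(f,t)$ ``is admissible for all $t'<t$ sufficiently close to $t$ once one perturbs,'' but if the near-optimal pair $(\bfx,\bfx')$ has $d_\Xcal(\bfx,\bfx')=t$ exactly, it is \emph{not} admissible for any $t'<t$, and in a general metric space there may be nothing nearby to perturb to. This is precisely how left-discontinuities arise in the discrete examples of \Cref{fig:modcont}. Without a length-space assumption (or subadditivity derived from one), neither one-sided continuity at interior points can be salvaged by the argument you give; the honest statement in the general setting is monotonicity plus continuity at $0$, which is what the paper actually uses downstream.
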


\Needspace{2\baselineskip}
\begin{remark}{~}\label{rem:MOC}
\begin{enumerate}
\item Since \(\Xcal\) is compact, we note that any continuous function
\(f\colon\Xcal\to\Ycal\) is also \emph{uniformly continuous}, that is,
for each \(\varepsilon>0\) there exists \(\delta>0\) such that for all
\(\bfx,\bfx'\in\Xcal\) with \(d_{\Xcal}(\bfx,\bfx')<\delta\) there holds
\(d_{\Ycal}\big(f(\bfx),f(\bfx')\big)<\varepsilon\).

\item In the literature, the modulus of continuity from \Cref{def:modcont} is
often referred to as the \emph{optimal} modulus of continuity. 

\item Vice versa, any function which satisfies the assertions of
\Cref{lem:ModCont} and is a majorant to the (optimal) modulus of continuity
is considered as a modulus of continuity. Throughout this article we do not
rely on this distinction and always refer to the optimal modulus of continuity
from \Cref{def:modcont}, whenever we refer to the modulus of continuity.
\end{enumerate}
\end{remark}

Every site-to-value map \(f_N\colon X_N\to Y_N\subset\Ycal\),
see \eqref{eq:sampleFunc}, is uniformly continuous, which is immediate by
choosing \(0<\delta < q_X\), compare the first item of \Cref{rem:MOC}.
Moreover, \((X_N,d_{\Xcal})\) is a metric space by restricting the metric of
\(\Xcal\) to \(X_N\). This stipulates the definition of the \emph{discrete
modulus of continuity} for \(f\colon\Xcal\to\Ycal\) according to
\begin{equation}\label{eq:discmodcont}
\omega_N(f,t)=\max_{\gfrac{\bfx_i,\bfx_j\in X_N:}{d_{i,j}\leq t}}
d_\calY\big(f(\bfx_i),f(\bfx_j)\big).
\end{equation}

\begin{remark}{~}
\begin{enumerate}
\item The properties from Lemma~\ref{lem:ModCont} directly carry over to
\(\omega_N(f,t)\).
\item The definition \eqref{eq:discmodcont} does not require the knowledge of
a function $f$ defined on all of \(\Xcal\).
\item Vice versa, one can always find a function
\(f\colon\Xcal\to\Ycal\) with \(f({\bfx}_i)={\bfy}_i\) for \(i=1,\ldots,N\).
Therefore, it is possible to define the discrete modulus of continuity
exclusively using the set of data points. 
To emphasize this fact, we may also write
\begin{equation}\label{eq:discretemodulusdata}
\omega_N(Y_N,t)\isdef\max_{\gfrac{\bfx_i,\bfx_j\in X_N:}{d_{i,j}\leq t}}
d_\calY(\bfy_i,\bfy_j).
\end{equation}
\item We will later adopt the perspective that $\calX$ is a finite set of data
sites and $X_N$ a subset thereof. In this case the continuous modulus of
continuity from \Cref{def:modcont} and the discrete modulus of continuity from
\Cref{eq:discretemodulusdata} coincide on $\calX$.
\end{enumerate}
\end{remark}

We have the following result for the discrete modulus of continuity with respect
to the set of data sites.

\begin{lemma}[Monotonicity of discrete modulus of continuity]%
\label{lem:monotonicity}
Let \(X_N\subset X_{N'}\) for \(N'\geq N\). Then there holds
\[
\omega_N(f,t)\leq \omega_{N'}(f,t)\leq \omega(f,t),\quad t\geq 0,
\]
for any function \(f\colon\Xcal\to\Ycal\).
\end{lemma}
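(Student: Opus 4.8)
The plan is to prove the two inequalities separately, both directly from the definitions \eqref{eq:discmodcont} and \eqref{def:modcont}, using only that enlarging the index set over which a maximum is taken cannot decrease the maximum. The key structural observation is that each of the three quantities $\omega_N(f,t)$, $\omega_{N'}(f,t)$, $\omega(f,t)$ is a supremum (a maximum in the finite cases) of the same functional $(\bfx,\bfx')\mapsto d_\calY(f(\bfx),f(\bfx'))$ over nested constraint sets.

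\medskip
\noindent\textbf{Step 1: $\omega_N(f,t)\leq\omega_{N'}(f,t)$.} Fix $t\geq0$. By definition,
\[
\omega_N(f,t)=\max_{\substack{\bfx_i,\bfx_j\in X_N\\ d_\calX(\bfx_i,\bfx_j)\leq t}}d_\calY\big(f(\bfx_i),f(\bfx_j)\big).
\]
Since $X_N\subseteq X_{N'}$, every admissible pair $(\bfx_i,\bfx_j)$ with $\bfx_i,\bfx_j\in X_N$ and $d_\calX(\bfx_i,\bfx_j)\leq t$ is also an admissible pair for $\omega_{N'}(f,t)$. Hence the maximum defining $\omega_{N'}(f,t)$ is taken over a superset of pairs, and the value cannot decrease; if the admissible set for $\omega_N$ is empty (which happens only for $t<q_{X_N}$, in which case $\omega_N(f,t)=0$ since the diagonal pairs $\bfx_i=\bfx_i$ always satisfy $d_\calX(\bfx_i,\bfx_i)=0\leq t$, so actually the set is never empty for $t\geq0$), the inequality is trivial because $\omega_{N'}(f,t)\geq0$ by \Cref{lem:ModCont}. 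Either way $\omega_N(f,t)\leq\omega_{N'}(f,t)$.

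\medskip
\noindent\textbf{Step 2: $\omega_{N'}(f,t)\leq\omega(f,t)$.} Again fix $t\geq0$. Every pair $\bfx_i,\bfx_j\in X_{N'}\subseteq\Xcal$ with $d_\calX(\bfx_i,\bfx_j)\leq t$ is in particular a pair $\bfx,\bfx'\in\Xcal$ with $d_\calX(\bfx,\bfx')\leq t$, so the (finite) maximum defining $\omega_{N'}(f,t)$ is bounded above by the supremum over all such pairs in $\Xcal$, which is exactly $\omega(f,t)$. This uses only $X_{N'}\subseteq\Xcal$ and that a maximum over a subset is at most the supremum over the whole set.

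\medskip
I do not expect any genuine obstacle here: the statement is a monotonicity property that follows purely from set inclusion of the constraint sets, and $f$ need not even be continuous on $\Xcal$ for the argument to go through (the right-hand $\omega(f,t)$ may be $+\infty$, but the inequalities still hold in $[0,\infty]$). The only point requiring a word of care is the degenerate regime of small $t$, where the discrete moduli vanish while $\omega(f,t)$ need not, but this is consistent with the claimed inequalities and is handled by the trivial lower bound $\omega_N(f,t)\geq0$. Chaining Steps 1 and 2 gives $\omega_N(f,t)\leq\omega_{N'}(f,t)\leq\omega(f,t)$ for all $t\geq0$, as asserted.
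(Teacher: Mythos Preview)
Your proof is correct and follows essentially the same approach as the paper: both arguments exploit that $\omega_N$, $\omega_{N'}$, and $\omega$ are maxima/suprema of the same functional over nested constraint sets, so monotonicity of the supremum under set inclusion gives the chain of inequalities. The paper phrases this by introducing auxiliary index sets $I_{N,t}\subset I_{N',t}$ and value sets $F_{N,t}\subset F_{N',t}$, but the content is identical to your Steps~1 and~2.
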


\begin{proof} Without loss of generality, let
\(X_{N'}=X_{N}\cup\{{\bs x}_{N+1},\ldots,{\bs x}_{N'}\}\).
We introduce the index sets
\[
I_{N,t}\isdef\big\{(i,j)\in\{1,\ldots N\}^2: d_{i,j}\leq t\big\}
\]
and observe
\(I_{N,t}\subset I_{N',t}\) for any \(N'\geq N\) and any \(t\geq 0\). 
Analogously, we define the sets
\begin{equation}\label{eq:FuncDist}
F_{N,t}\isdef\big\{d_{\Ycal}\big(f(\bfx_i),
f(\bfx_j)\big):(i,j)\in I_{N,t}\big\},
\end{equation}
which satisfy \(F_{N,t}\subset F_{N',t}\).
Therefore, we infer
\[
\omega_N(f,t)=\max F_{N,t}\leq \max F_{N',t}= \omega_{N'}(f,t)
\quad\text{for any }t\geq 0.
\]
The second bound follows in complete analogy.
\end{proof}

The discrete modulus of continuity gives rise to a discrete version of the
\(|\cdot|_{\rho}\)-seminorm. We define
\begin{equation}\label{eq:discLipNorm}
|f|_{\rho,N}
\isdef\max_{i,j\in\{1,\ldots,N\}: i\neq j}\rho(d_{i,j})^{-1}\omega_N(f,d_{i,j}).
\end{equation}
Again, in complete analogy, if only a site-to-value map \(f_N\colon X_N\to Y_N\)
is known, we set
\begin{equation}\label{eq:dataLipNorm}
|f_N|_{\rho,N}
\isdef\max_{i,j\in\{1,\ldots,N\}: i\neq j}
\rho(d_{i,j})^{-1}\omega_N(Y_N,d_{i,j}).
\end{equation}
\begin{lemma}[Monotonicity of the discrete seminorm]
Let \(X_N\subset X_{N'}\) for \(N'\geq N\). Then there holds
\[
|f|_{\rho,N}\leq |f|_{\rho,N'}\leq|f|_{\rho}
\]
for any function \(f\colon\Xcal\to\Ycal\).
\end{lemma}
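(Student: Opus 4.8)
The plan is to reduce both estimates to the monotonicity of the discrete modulus of continuity, \Cref{lem:monotonicity}, which already provides $\omega_N(f,t)\leq\omega_{N'}(f,t)\leq\omega(f,t)$ for every $t\geq 0$. As in the proof of that lemma, I would write $X_{N'}=X_N\cup\{\bfx_{N+1},\ldots,\bfx_{N'}\}$, so that the indices $1,\ldots,N$, and in particular the distances $d_{i,j}$ with $i,j\leq N$, are shared by both configurations and appear in the index sets of the respective maxima.

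For the first inequality, fix a pair $(i,j)$ with $i\neq j$ and $i,j\in\{1,\ldots,N\}$. The weight $\rho(d_{i,j})^{-1}$ is a fixed element of $[0,\infty]$, so applying it to $\omega_N(f,d_{i,j})\leq\omega_{N'}(f,d_{i,j})$ yields $\rho(d_{i,j})^{-1}\omega_N(f,d_{i,j})\leq\rho(d_{i,j})^{-1}\omega_{N'}(f,d_{i,j})$. Since $(i,j)$ is also an admissible pair in the maximum defining $|f|_{\rho,N'}$, the right-hand side is bounded by $|f|_{\rho,N'}$, and taking the maximum over all such $(i,j)$ gives $|f|_{\rho,N}\leq|f|_{\rho,N'}$. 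The second inequality follows in complete analogy: for $(i,j)$ with $i\neq j$ and $i,j\in\{1,\ldots,N'\}$, the bound $\omega_{N'}(f,d_{i,j})\leq\omega(f,d_{i,j})$ gives $\rho(d_{i,j})^{-1}\omega_{N'}(f,d_{i,j})\leq\rho(d_{i,j})^{-1}\omega(f,d_{i,j})\leq\sup_{t>0}\rho(t)^{-1}\omega(f,t)=|f|_{\rho}$, where the middle step uses that $d_{i,j}>0$; maximizing over $(i,j)$ finishes the argument.

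The only point needing mild care — and arguably the ``main obstacle'', albeit a minor one — is the handling of the weight $\rho(d_{i,j})^{-1}$, so that the inequalities between moduli of continuity transfer term by term without a case distinction on whether $\rho(d_{i,j})$ vanishes. This is unproblematic: the data sites are distinct, hence $d_{i,j}\geq q_{X_N}>0$, and in the extended arithmetic of $[0,\infty]$ one has $ca\leq cb$ whenever $a\leq b$ and $c\in[0,\infty]$, with the convention $0\cdot\infty=0$, so the term-wise comparison is valid in all cases.
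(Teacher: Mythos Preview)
Your proof is correct and follows exactly the route the paper takes: the paper's entire argument is the one-line remark that the assertion is an immediate consequence of \Cref{lem:monotonicity}, and you have simply spelled out the straightforward details of that reduction. Your added care about the weight $\rho(d_{i,j})^{-1}$ in extended arithmetic is fine but not something the paper bothers to mention.
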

\begin{proof}
The assertion is an immediate consequence of \Cref{lem:monotonicity}.
\end{proof}

\section{Consistency for deterministic data sites}\label{sec:detcons}
In this section, we derive consistency estimates for the discrete modulus of
continuity for deterministic data sites that meet specific properties. More
precisely, we assume that the site-to-value map $f_N\colon X_N\to Y_N$ is the
restriction of some function $f\colon\calX\to\calY$, that is, $f_N=f|_{X_N}$.
We derive approximation estimates of the discrete modulus of continuity 
$\omega_N(Y_N,\cdot)=\omega_N(f,\cdot)$ to the modulus of continuity
$\omega(f,\cdot)$ and discuss the consistency behavior for $N\to\infty$. To this
end, we require no further assumptions on $\calX$ despite of it being a compact
metric space. For example, $\calX$ may be a finite but larger set than \(X_N\)
or a metric space of infinite cardinality in which the data sites are embedded.

\subsection{Covering numbers}\label{sec:coveringnumbers}
\begin{definition}\label{def:CoveringNumbers}
An \emph{$\varepsilon$-net} for $(\calX,d_{\calX})$
is a subset \(\Kcal\subset\Xcal\) such that for each
\({\bs x}\in\Xcal\) there exists \({\bs x}'\in\Kcal\)
with 
\({\bs x}\in B_{\varepsilon}({\bs x}')
\isdef\{{\bs x}\in\Xcal:d_\Xcal(\bfx,\bfx')\leq\varepsilon\}\).
The smallest cardinality of an $\varepsilon$-net
for \(\Xcal\) is the 
\emph{covering number} \(\calN(\calX,\varepsilon)\),
that is,
\begin{equation}\label{eq:CoveringNumber}
	\calN(\calX,\varepsilon)\isdef\min\{|\Kcal|:
	\Kcal\text{ is an $\varepsilon$-net for }(\calX,d_{\calX})\}.
\end{equation}
\end{definition}

\begin{remark}Sometimes, it is assumed that an \(\varepsilon\)-net
\(\Kcal\subset\Xcal\)
is also an \emph{\(\varepsilon/2\)-packing} in the sense that
\({\bs x}\not\in B_{\varepsilon/2}({\bs x}')\) for all
\({\bs x},{\bs x}'\in\Kcal\)
with \({\bs x}\neq{\bs x}'\), see, for example, \cite{Cla2006,KL2004,BKL2006}.
In this case, the covering number can be shown to be equivalent to the
\emph{packing number}
\[
\Pcal(\Xcal,\varepsilon)\isdef\max\{|\Kcal|:
	\Kcal\text{ is an $\varepsilon$-packing for }(\calX,d_{\calX})\}
\]
meaning that
\(
\Pcal(\Xcal,\varepsilon)\leq\calN(\calX,\varepsilon)
\leq\Pcal(\Xcal,\varepsilon/2),
\)
see \cite[Theorem 4]{Kol1956}.
\end{remark}
It follows directly from Definition~\ref{def:CoveringNumbers} that
\begin{equation}\label{eq:coveringnumbermonotonic}
\calN(\calX,\varepsilon)
\geq
\calN(\calX,\varepsilon')
\end{equation}
for all $0\leq\varepsilon<\varepsilon'$.

While the covering number is rarely accessible in closed form, 
upper and lower bounds to it have been the subject of several
investigations in the literature. 
In the following, we give three intuitive examples.
\begin{example}\label{ex:Assouad}
\begin{enumerate}
\item If $\calX\subset\bbR^d$ is equipped with the Euclidean metric, 
then it is a standard result that
\[
\frac{\lambda(\calX)}{\lambda\big(B_r(0)\big)}
\leq
\calN(\calX,r)
\leq
3^d\frac{\lambda(\calX)}{\lambda\big(B_r(0)\big)},
\]
where $\lambda$ is the Lebesgue measure on $\bbR^d$.
\item More generally, a metric space $\calX$ is \emph{Ahlfors $n$-regular},
if there exists a Borel regular measure $\mu$ and constants $C\geq 1$,
$n>0$ such that $C^{-1}R^n\leq\mu(B_R(\bfx))\leq CR^n$ for all $\bfx\in\calX$,
$R>0$, see \cite{Hei01}. By the same arguments as for $\bbR^d$, the covering
number of an Ahlfors $n$-regular space satisfies
\[
c^{-1}r^{-n}
\leq
\calN(\calX,r)
\leq
cr^{-n}
\]
for all $r>0$ and some constant $c>0$. Especially, the Euclidean space
$\bbR^d$ is Ahlfors $d$-regular.
\item The metric space \((\Xcal,d_\Xcal)\) is \emph{\((C,s)\)-homogeneous}, if 
\[
\Ncal\big(B_R({\bs x}),r\big)\leq C\bigg(\frac R r\bigg)^s
\]
for all \({\bs x}\in\Xcal\) and all \(0<r<R<\infty\). 
The infimum of the set of all \(s\) such that 
\((\Xcal,d_\Xcal)\) is \((C,s)\)-homogeneous is 
the \emph{Assouad dimension} of \((\Xcal,d_\Xcal)\), see \cite{Assouad}.
For a metric space with Assouad dimension \(s\) there holds
\begin{align}\label{eq:assouadcover}
\Ncal(\Xcal,r)\leq cr^{-s},
\end{align}
with a constant \(c>0\) depending on \(\Xcal\). In particular, an
Ahlfors $n$-regular metric space has Assouad dimension $n$. 
Finally, we remark that the Assouad dimension of a finite metric space
is $0$.
\end{enumerate}
\end{example}

An important property of $\varepsilon$-nets is that their covering property is
preserved in the image of a given metric.
\begin{lemma}[Metric maps $\varepsilon/2$-nets to $\varepsilon$-nets]%
\label{lem:metricmapscovering}
Let $(\calX,d)$ be a compact metric space, $\varepsilon>0$, and
$X_N=\{\bfx_1,\ldots,\bfx_N\}$ an $\varepsilon/2$-net. Let 
$Z_N\subset\calZ\subset[0,\infty]$ be defined as
\begin{align}\label{eq:Z}
Z_N=\{d_\calX(\bfx,\bfx')\colon\bfx,\bfx'\in X_N\},\qquad
\calZ=\{d_\calX(\bfx,\bfx')\colon\bfx,\bfx'\in \calX\}.
\end{align}
Then $Z_N$ is an $\varepsilon$-net for $\calZ$, that is, there holds
\[
\bigcup_{\bfz\in Z_N}B_\varepsilon(\bfz)=\calZ.
\]
\end{lemma}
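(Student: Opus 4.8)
The plan is to check the two defining properties of an $\varepsilon$-net for $\calZ$ from \Cref{def:CoveringNumbers} directly. First, $Z_N\subset\calZ$ is immediate: if $\bfx_i,\bfx_j\in X_N\subset\calX$, then $d_\calX(\bfx_i,\bfx_j)$ is in particular of the form $d_\calX(\bfx,\bfx')$ with $\bfx,\bfx'\in\calX$, so $Z_N\subset\calZ$. Note also that $\diam(\calX)<\infty$ forces $\calZ\subset[0,\diam(\calX)]$, so the metric on $\calZ$ inherited from $[0,\infty]$ is just the restriction of the Euclidean one and $B_\varepsilon(\bfz)$ in the statement is the closed ball $\{\bfz''\in\calZ:|\bfz-\bfz''|\le\varepsilon\}$ taken within $\calZ$; consequently $\bigcup_{\bfz\in Z_N}B_\varepsilon(\bfz)\subset\calZ$ automatically, and it remains only to prove the reverse inclusion.

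For that, I would fix an arbitrary $\bfz\in\calZ$ and write $\bfz=d_\calX(\bfx,\bfx')$ for suitable $\bfx,\bfx'\in\calX$. Since $X_N$ is an $\varepsilon/2$-net, there exist $\bfx_i,\bfx_j\in X_N$ with $d_\calX(\bfx,\bfx_i)\le\varepsilon/2$ and $d_\calX(\bfx',\bfx_j)\le\varepsilon/2$. The key step is the observation that the distance function is $1$-Lipschitz in each of its arguments, which, via two applications of the triangle inequality, yields
\[
\big| d_\calX(\bfx,\bfx') - d_\calX(\bfx_i,\bfx_j) \big|
\le d_\calX(\bfx,\bfx_i) + d_\calX(\bfx',\bfx_j) \le \varepsilon .
\]
Setting $\bfz'\isdef d_\calX(\bfx_i,\bfx_j)\in Z_N$, this reads $\bfz\in B_\varepsilon(\bfz')$. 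As $\bfz\in\calZ$ was arbitrary, $\calZ\subset\bigcup_{\bfz\in Z_N}B_\varepsilon(\bfz)$, and combined with the first paragraph this gives equality, which also shows $Z_N$ is an $\varepsilon$-net for $\calZ$ in the sense of \Cref{def:CoveringNumbers}.

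There is no substantial obstacle; the only point deserving a line of care is deriving the estimate $|d_\calX(\bfx,\bfx')-d_\calX(\bfx_i,\bfx_j)|\le d_\calX(\bfx,\bfx_i)+d_\calX(\bfx',\bfx_j)$ from the ordinary triangle inequality and its symmetric counterpart, equivalently phrasing it as the $1$-Lipschitz continuity of $(\bfx,\bfx')\mapsto d_\calX(\bfx,\bfx')$ with respect to the sum metric on $\calX\times\calX$. I would also state explicitly, as above, that the balls in the assertion are understood within $\calZ$, so that the claimed set equality (rather than a mere inclusion) is the natural reading.
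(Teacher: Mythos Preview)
Your proposal is correct and follows essentially the same approach as the paper: pick $\bfx_i,\bfx_j\in X_N$ within $\varepsilon/2$ of $\bfx,\bfx'$ and bound $|d_\calX(\bfx,\bfx')-d_\calX(\bfx_i,\bfx_j)|$ by $d_\calX(\bfx,\bfx_i)+d_\calX(\bfx',\bfx_j)\le\varepsilon$. The only cosmetic difference is that the paper writes out the two cases $t\ge s$ and $t<s$ separately, whereas you invoke the reverse triangle inequality (the $1$-Lipschitz property of the metric) directly.
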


\begin{proof}
We show that for all $t\in Z$ there is $s\in Z_N$ such that
$|t-s|\leq \varepsilon$. Let $t\in\calZ$. Hence, there exist
$\bfx,\tilde{\bfx}\in\calX$ such that $t=d_\calX(\bfx,\bfx')$. Moreover,
since $X_N$ is a $\varepsilon/2$-net for $\calX$, there further exist
$\bfx_i,\bfx_j\in X_N$ such that
\[
d_\calX(\bfx,\bfx_i)\leq\frac{\varepsilon}{2},
\qquad
d_\calX(\bfx',\bfx_j)\leq\frac{\varepsilon}{2}.
\]
Thus, we have $s=d_\calX(\bfx_i,\bfx_j)\in Z_N$ and it remains to bound $|t-s|$.

First let \(t\geq s\). By the triangle inequality, we derive 
\begin{align*}
t-s
&=
d_\calX(\bfx,\bfx')-d_\calX(\bfx_i,\bfx_j)\\
&\leq
d_\calX(\bfx,\bfx_i)+d_\calX(\bfx_i,\bfx_j)
+d_\calX(\bfx_j,\bfx')-d_\calX(\bfx_i,\bfx_j)\\
&=
d_\calX(\bfx,\bfx_i)+d_\calX(\bfx',\bfx_j)\\
&\leq
\varepsilon.
\end{align*}
Now, let \(t<s\). Analogously, we infer
\begin{align*}
s-t&=d_\calX(\bfx_i,\bfx_j)-d_\calX(\bfx,\bfx')\\
&\leq d_\calX(\bfx_i,\bfx)+d_\calX(\bfx,\bfx')
+d_\calX(\bfx',\bfx_j)-d_\calX(\bfx,\bfx')\\
&=d_\calX(\bfx,\bfx_i)+d_\calX(\bfx',\bfx_j)\\
&\leq
\varepsilon.
\end{align*}
Combining the two estimates proves the assertion.
\end{proof}

An immediate consequence of this lemma is the following consistency result for
continuous moduli of continuity.
\begin{corollary}[Consistency for continuous moduli of continuity]%
\label{cor:continuousconsistency}
Let $f\colon\calX\to\calY$ have a continuous modulus of continuity. Moreover, let
$\{X_N\}_{N\in\bbN}$ be a sequence of $r_N/2$-nets with $r_N\to 0$. Then
\[
\omega(f,t)-\omega_N(f,t)\to 0
\qquad
\text{as}~N\to\infty.
\]
The convergence is uniform in $t$.
\end{corollary}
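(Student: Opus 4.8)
The plan is to deduce the corollary from \Cref{lem:metricmapscovering} together with the uniform continuity of the modulus of continuity, which in turn follows from continuity on the compact interval $[0,T_\Xcal]$ and the convention $\omega(f,t)=\omega(f,T_\Xcal)$ for $t\geq T_\Xcal$. First I would fix $\varepsilon>0$. By \Cref{lem:ModCont}, $\omega(f,\cdot)$ is continuous on $[0,\infty]$, hence uniformly continuous on the compact set $[0,\infty]$; so there exists $\delta>0$ such that $|\omega(f,t)-\omega(f,s)|\leq\varepsilon$ whenever $|t-s|\leq\delta$. Next, since $r_N\to 0$, choose $N_0$ so large that $r_N\leq\delta$ for all $N\geq N_0$.

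The core step is to compare $\omega_N(f,t)$ with $\omega(f,t)$ for a fixed $t\geq 0$ and $N\geq N_0$. By \Cref{lem:monotonicity} we already have $\omega_N(f,t)\leq\omega(f,t)$, so only the lower bound requires work. The supremum defining $\omega(f,t)$ is attained (or approached) by a pair $\bfx,\bfx'\in\calX$ with $d_\calX(\bfx,\bfx')\leq t$; using that $X_N$ is an $r_N/2$-net, pick $\bfx_i,\bfx_j\in X_N$ with $d_\calX(\bfx,\bfx_i)\leq r_N/2$ and $d_\calX(\bfx',\bfx_j)\leq r_N/2$. The triangle-inequality computation from the proof of \Cref{lem:metricmapscovering} gives $|d_\calX(\bfx,\bfx')-d_\calX(\bfx_i,\bfx_j)|\leq r_N\leq\delta$, hence in particular $d_{i,j}=d_\calX(\bfx_i,\bfx_j)\leq t+\delta$. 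Therefore
\[
\omega_N(f,t+\delta)\;\geq\;d_\calY\big(f(\bfx_i),f(\bfx_j)\big)\;\geq\;d_\calY\big(f(\bfx),f(\bfx')\big)-d_\calY\big(f(\bfx),f(\bfx_i)\big)-d_\calY\big(f(\bfx'),f(\bfx_j)\big),
\]
and the last two subtracted terms are each bounded by $\omega(f,r_N/2)\leq\omega(f,\delta/2)$, which is itself $\leq\varepsilon$ by the choice of $\delta$ (shrinking $\delta$ at the outset so that $\omega(f,\delta)\leq\varepsilon$ as well, using continuity at $0$ from \Cref{lem:ModCont}). This shows $\omega(f,t)-\omega_N(f,t+\delta)\leq 2\varepsilon$ for all $t$; combined with monotonicity $\omega_N(f,t+\delta)\geq\omega_N(f,t)$ is not what we want — instead I would run the argument the other way, bounding $\omega(f,t)$ from a pair realizing it and landing in $\omega_N$ evaluated at a point within $\delta$ of $t$, then use $|\omega(f,t)-\omega(f,t')|\leq\varepsilon$ for $|t-t'|\leq\delta$ to absorb the shift.

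More cleanly: for fixed $t$, take $\bfx,\bfx'$ with $d_\calX(\bfx,\bfx')\leq t$ and $d_\calY(f(\bfx),f(\bfx'))\geq\omega(f,t)-\varepsilon$, and the corresponding net points $\bfx_i,\bfx_j$. Set $s\isdef d_{i,j}\leq t+r_N$. Then $\omega_N(f,s)\geq d_\calY(f(\bfx_i),f(\bfx_j))\geq\omega(f,t)-\varepsilon-2\omega(f,r_N/2)$. Using monotonicity of $\omega$ and $\omega_N$ and $r_N\leq\delta$, together with $\omega(f,t+\delta)-\omega(f,t)\leq\varepsilon$, one rearranges to get $\omega(f,t)-\omega_N(f,t+\delta)\leq\varepsilon+2\omega(f,\delta/2)$. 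Since the right-hand side is independent of $t$ and $\to 0$ as $\varepsilon,\delta\to 0$, and since $\delta$ can be taken arbitrarily small by further shrinking (the net scale $r_N$ only needs $r_N\le\delta$), we conclude $\sup_t\big(\omega(f,t)-\omega_N(f,t)\big)\to 0$; the lower bound $\omega(f,t)-\omega_N(f,t)\geq 0$ is \Cref{lem:monotonicity}. This yields uniform convergence in $t$.

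The main obstacle is the bookkeeping around the $\delta$-shift in the argument of $\omega_N$: one must carefully exploit the uniform continuity of $\omega(f,\cdot)$ on all of $[0,\infty]$ (which is where compactness of $\calX$, via $T_\Xcal<\infty$, enters) to convert a statement about $\omega_N(f,t+\delta)$ into one about $\omega_N(f,t)$ without losing uniformity in $t$. The terms $\omega(f,r_N/2)$ arising from the displacement of the image points are handled by continuity of $\omega$ at $0$; everything else is the triangle inequality already recorded in \Cref{lem:metricmapscovering}.
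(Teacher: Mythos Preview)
Your approach is correct in spirit and aligns with the paper's: both invoke the net property behind \Cref{lem:metricmapscovering} together with the uniform continuity of $\omega(f,\cdot)$ on the compact interval $[0,T_\calX]$. The paper's own proof is only two sentences, so your write-up supplies mechanics the paper omits.

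However, your execution leaves the $\delta$-shift unresolved. You reach $\omega(f,t)-\omega_N(f,t+\delta)\leq \varepsilon+2\omega(f,\delta/2)$ uniformly in $t$, and then assert $\sup_t\big(\omega(f,t)-\omega_N(f,t)\big)\to 0$ without closing the gap between $\omega_N(f,t+\delta)$ and $\omega_N(f,t)$. Since $\omega_N$ is a step function, you cannot invoke continuity of $\omega_N$ here; your ``one rearranges'' is precisely the missing step. The clean fix---which is exactly what the paper does explicitly in \Cref{thm:coveringtobound} immediately after this corollary---is to shift at the \emph{start}: choose the near-maximizing pair $(\bfx,\bfx')$ with $d_\calX(\bfx,\bfx')\leq t-r_N$ (not $\leq t$), so that the net points satisfy $d_{i,j}\leq t$ and contribute to $\omega_N(f,t)$ directly. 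This yields
\[
0\leq\omega(f,t)-\omega_N(f,t)\leq\big[\omega(f,t)-\omega(f,t-r_N)\big]+2\omega(f,r_N),
\]
and both terms on the right vanish uniformly in $t$ by uniform continuity of $\omega(f,\cdot)$ and continuity at $0$; for $t<r_N$ the bound is trivial since $\omega(f,t)\leq\omega(f,r_N)$. Equivalently, you can repair your version by substituting $t\mapsto t-\delta$ in your established inequality and then using $|\omega(f,t)-\omega(f,t-\delta)|\leq\varepsilon$, but this must be written out rather than gestured at.
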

\begin{proof}
Convergence for fixed $t$ follows directly from \Cref{lem:metricmapscovering}.
Moreover, the function $f$ having a modulus
of continuity implies continuity of $f$. Since $\Xcal$ is compact, 
the function $f$ is even uniformly continuous, which, in turn, implies the
uniform convergence in $t$.
\end{proof}

We note that even a uniformly continuous function on a compact metric space is
not required to have a continuous modulus of continuity if $\calZ$ in
\Cref{lem:metricmapscovering} is a non-convex set. In fact, non-constant
site-to-value maps have always discontinuous moduli of continuity, such that a
more refined analysis is required. In the following we provide such an analysis.

\subsection{Pointwise approximation}\label{sec:pwdetconv}
The following theorem bounds the approximation error of the discrete modulus of
continuity compared to the continuous one at a given point $t>0$. For $t=0$,
there always holds $\omega(f,0)=\omega_N(f,0)=0$, see also \Cref{lem:ModCont}.
\begin{theorem}[Covering diameter $r<t$ implies error bound at $t$]%
\label{thm:coveringtobound}
Let $f\colon\calX\to\calY$ be continuous. Let $t>0$ be fixed and let
$X_N=\{\bfx_1,\ldots,\bfx_N\}$, $N\geq\calN(\calX,r/2)$, be an $r/2$-net
for $0<r\leq t$, that is,
\begin{equation}\label{eq:detcoverXr}
	\bigcup_{n=1}^NB_{r/2}(\bfx_n)=\calX.
\end{equation}
Then, there holds
\[
0
\leq
\omega(f,t)-\omega_N(f,t)
\leq
\omega(f,t)-\omega(f,t-r)+2\omega(f,r).
\]
\end{theorem}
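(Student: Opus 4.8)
The lower bound requires no work: since $X_N \subset \calX$, the second inequality in \Cref{lem:monotonicity} gives $\omega_N(f,t) \le \omega(f,t)$, hence $\omega(f,t) - \omega_N(f,t) \ge 0$. Thus the plan is to establish the upper bound, which, after subtracting $\omega(f,t)$ and multiplying by $-1$, is equivalent to the lower bound
\[
\omega_N(f,t) \ge \omega(f,t-r) - 2\omega(f,r).
\]
I would prove this by producing, for every admissible pair of the \emph{continuous} modulus at scale $t-r$, an admissible pair of the \emph{discrete} modulus at scale $t$ whose image distance is almost as large.

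Concretely, fix $\bfx,\bfx' \in \calX$ with $d_\calX(\bfx,\bfx') \le t-r$. The covering hypothesis \eqref{eq:detcoverXr} supplies data sites $\bfx_i,\bfx_j \in X_N$ with $d_\calX(\bfx,\bfx_i) \le r/2$ and $d_\calX(\bfx',\bfx_j) \le r/2$. The first key step is a triangle inequality in $\calX$ certifying admissibility at scale $t$:
\[
d_{i,j} \le d_\calX(\bfx_i,\bfx) + d_\calX(\bfx,\bfx') + d_\calX(\bfx',\bfx_j) \le \tfrac r2 + (t-r) + \tfrac r2 = t.
\]
The second key step is a triangle inequality in $\calY$ combined with the definition of $\omega(f,\cdot)$ and its monotonicity (\Cref{lem:ModCont}, so that $\omega(f,r/2) \le \omega(f,r)$): from $d_\calX(\bfx,\bfx_i) \le r/2$ and $d_\calX(\bfx',\bfx_j) \le r/2$ we get $d_\calY\big(f(\bfx),f(\bfx_i)\big) \le \omega(f,r)$ and $d_\calY\big(f(\bfx'),f(\bfx_j)\big) \le \omega(f,r)$, whence
\[
d_\calY\big(f(\bfx_i),f(\bfx_j)\big) \ge d_\calY\big(f(\bfx),f(\bfx')\big) - 2\omega(f,r).
\]
Since $(\bfx_i,\bfx_j)$ is admissible at scale $t$, the left-hand side is at most $\omega_N(f,t)$, so $\omega_N(f,t) \ge d_\calY\big(f(\bfx),f(\bfx')\big) - 2\omega(f,r)$.

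Finally, I would take the supremum over all $\bfx,\bfx' \in \calX$ with $d_\calX(\bfx,\bfx') \le t-r$; as $\omega_N(f,t)$ and $\omega(f,r)$ are independent of this pair, the supremum acts only on the first term and produces exactly $\omega_N(f,t) \ge \omega(f,t-r) - 2\omega(f,r)$, which rearranges to the claimed upper bound. The degenerate case $r = t$ is trivial, since then $\omega(f,t-r) = \omega(f,0) = 0$ and the bound reads $\omega_N(f,t) \ge -2\omega(f,t)$. I do not expect a genuine obstacle: the argument is two triangle inequalities glued together by the net property, and the only points needing a moment of care are invoking monotonicity of $\omega(f,\cdot)$ to pass from $\omega(f,r/2)$ to $\omega(f,r)$, and noting that $X_N$ is nonempty because it is a net for the nonempty compact space $\calX$ (the statement being vacuous otherwise).
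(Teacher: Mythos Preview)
Your proof is correct and follows essentially the same approach as the paper: both use the $r/2$-net property to replace an arbitrary pair $(\bfx,\bfx')$ at scale $t-r$ by a data-site pair $(\bfx_i,\bfx_j)$ at scale $t$, then apply the triangle inequality in $\calY$ together with monotonicity of $\omega(f,\cdot)$ to lose at most $2\omega(f,r)$. Your write-up is in fact slightly cleaner, since the paper phrases the choice of $(\bfx_i,\bfx_j)$ via an auxiliary $\argmin$ problem \eqref{eq:nearestmodneighbor} that is not actually needed---any net points within distance $r/2$ suffice, exactly as you observe.
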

\begin{proof}
From \Cref{lem:monotonicity}, we have
\begin{equation}\label{eq:addsub}
0
\leq
\omega(f,t)-\omega_N(f,t)
=
\omega(f,t)-\omega(f,t-r)
+\underbrace{\omega(f,t-r)-\omega_N(f,t)}_{=(\spadesuit)}.
\end{equation}
To prove the assertion, we need to estimate $(\spadesuit)$. The definition of
the modulus of continuity implies
\begin{align*}
(\spadesuit)
=
\sup_{\substack{\bfx,\bfx'\in\calX\\d_\calX(\bfx,\bfx')\leq t-r}}
d_\calY\big(f(\bfx),f(\bfx')\big)
-
\max_{\substack{\tilde{\bfx}_N,\tilde{\bfx}_N'\in X_N\\
d_\calX(\tilde{\bfx}_N,\tilde{\bfx}_N')\leq t}}d_\calY\big(f(\tilde{\bfx}_N),
f(\tilde{\bfx}_N')\big).
\end{align*}
To allow for a comparison between supremum and maximum, observe that
\Cref{eq:detcoverXr} implies that for every $\bfx,\bfx'\in\calX$ there exist
$\bfx_N,\bfx_N'\in X_N$ such that
\begin{equation}\label{eq:eps3}
d_\calX(\bfx,\bfx_N)\leq\frac{r}{2},
\qquad
d_\calX(\bfx',\bfx_N')\leq\frac{r}{2}.
\end{equation}
As a consequence, for given 
$\bfx,\bfx'\in\calX$ with $d_\calX(\bfx,\bfx')\leq t-r$, the problem
\begin{equation}\label{eq:nearestmodneighbor}
(\bfx_N,\bfx_N')
\in
\argmin_{
	\substack{d_\calX(\bfx,\tilde\bfx_N)\leq\frac{r}{2}\\
    d_\calX(\bfx',\tilde\bfx_N'))\leq\frac{r}{2}\\
		d_\calX(\tilde\bfx_N,\tilde\bfx_N')\leq t
	}
}\big(d_\calX(\bfx,\tilde\bfx_N)+d_\calX(\bfx',\tilde\bfx_N')\big)
\end{equation}
always exhibits a solution. The situation is illustrated in \Cref{fig:balls}.
To keep the notation light we suppress the dependence of $(\bfx_N,\bfx_N')$ on
$(\bfx,\bfx')$.
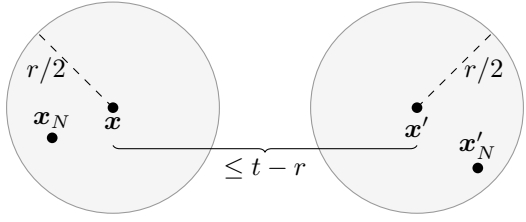
\begin{figure}[htb]
\begin{center}
\begin{tikzpicture}[scale=1]
\def\dx{4}      %
\def\dy{0}        %
\def\t{4}       %
\def\r{1.4}       %
\def\yoffset{0.2} %
\coordinate (x) at (0,0);
\coordinate (x') at (\dx,\dy);
\draw[fill=black!10,opacity=0.4] (x) circle (\r);
\draw[fill=black!10,opacity=0.4] (x') circle (\r);
\fill (x) circle (2pt) node[below,black] {${\bs x}$};
\fill (x') circle (2pt) node[below,black] {${\bs x}'$};
\draw[black,dashed] (x) -- +(-0.707*\r,0.707*\r)node[midway,left,black]{$r/2$};
\draw[black,dashed] (x') -- +(0.707*\r,0.707*\r)node[midway,right,black]{$r/2$};
\draw[decoration={brace,mirror,raise=0.5cm},decorate] (x) -- (x')
node[midway,below=0.55cm,black] {$\leq t-r$};

\coordinate (y) at (\r*0.5-1.5,-2*\yoffset);
\coordinate (y') at (1.5 + \dx - \r*0.5, -4*\yoffset);
\fill (y) circle (2pt) node[above,black] {${\bs x}_N$};
\fill (y') circle (2pt) node[above,black] {${\bs x}_N'$};
\end{tikzpicture}
\end{center}
\caption{\label{fig:balls}Visualization of the situation in
\eqref{eq:nearestmodneighbor}.}
\end{figure}

Now, by observing that the triangle inequality implies the bound
\[
d_\calY(f(\bfx),f(\bfx'))
-
d_\calY(f(\bfx),f(\bfx_N))
-
d_\calY(f(\bfx'),f(\bfx_N'))
\leq
d_\calY(f(\bfx_N),f(\bfx_N'))
\]
and that there holds
\[
d_\calY\big(f(\bfx_N),f(\bfx_N')\big)
\leq
\max_{\substack{\tilde{\bfx}_N,\tilde{\bfx}_N'\in X_N\\
d_\calX(\tilde{\bfx}_N,\tilde{\bfx}_N')\leq t}}d_\calY
\big(f(\tilde{\bfx}_N),f(\tilde{\bfx}_N')\big),
\]
we obtain that
\begin{align*}
(\spadesuit)
&=
\sup_{\substack{\bfx,\bfx'\in\calX\\d_\calX(\bfx,\bfx')\leq t-r}}
d_\calY\big(f(\bfx),f(\bfx')\big)
-
\max_{\substack{\tilde{\bfx}_N,\tilde{\bfx}_N'\in X_N\\
d_\calX(\tilde{\bfx}_N,\tilde{\bfx}_N')\leq t}}
d_\calY\big(f(\tilde{\bfx}_N),f(\tilde{\bfx}_N')\big)\\
&\leq
\sup_{\substack{\bfx,\bfx'\in\calX\\d_\calX(\bfx,\bfx')\leq t-r}}
\bigg(d_\calY\big(f(\bfx),f(\bfx')\big)
-
d_\calY\big(f(\bfx_N),f(\bfx_N')\big)\bigg)\\
&\leq
\sup_{\substack{\bfx,\bfx'\in\calX\\d_\calX(\bfx,\bfx')\leq t-r}}
\bigg(
d_\calY\big(f(\bfx),f(\bfx')\big)\\
&\phantom{\leq\sup_{\substack{\bfx,\bfx'\in\calX\\
  d_\calX(\bfx,\bfx')\leq t-r}}
	\bigg(}
-\Big(d_\calY(f(\bfx),f(\bfx'))-d_\calY(f(\bfx),f(\bfx_N))
-d_\calY(f(\bfx'),f(\bfx_N'))\Big)
\bigg)\\
&=
\sup_{\substack{\bfx,\bfx'\in\calX\\d_\calX(\bfx,\bfx')\leq t-r}}
\Big(d_\calY\big(f(\bfx),f(\bfx_N)\big)
+d_\calY\big(f(\bfx'),f(\bfx_N')\big)\Big)\\
&\leq
\sup_{\substack{\bfx,\bfx'\in\calX\\d_\calX(\bfx,\bfx')\leq t-r}}
\Big(\omega\big(f,d_\calX(\bfx,\bfx_N)\big)
+\omega(f,d_\calX(\bfx',\bfx_N')\big)\Big)\\
&\leq
2\omega(f,r).
\end{align*}
Herein, we used the monotonicity of the modulus of continuity in the last
inequality. The assertion follows with \Cref{eq:addsub}.
\end{proof}
We note that the theorem recovers \Cref{cor:continuousconsistency} if the
modulus of continuity is continuous due to $\omega(f,0)=0$, see
\Cref{lem:ModCont}. We also note that moduli of continuity having a jump in
the left vicinity of $t$ cannot be well approximated. This is to be expected,
as discontinuous functions cannot be well approximated by piecewise constant
functions, in general, and the discrete modulus of continuity is indeed
piecewise constant.

We have the following corollary.
\begin{corollary}[Best $N$-term approximation rate at %
$t$]\label{cor:bestNtermt}
Let $f\colon\calX\to\calY$ be continuous and let $t>0$ and $N\in\bbN$ be fixed.
Further, set
\[
r(N,t)\isdef\min\big\{0\leq r\leq t\colon \calN(\calX,r)\leq N\big\}.
\]
Then, there exists a set $X_N=\{\bfx_1,\ldots,\bfx_N\}$ such that
\[
0
\leq
\omega(f,t)-\omega_N(f,t)
\leq
\omega(f,t)-\omega\big(f,t-r(N,t)\big)+2\omega\big(f,r(N,t)\big).
\]
\end{corollary}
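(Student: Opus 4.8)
The plan is to obtain the statement as a direct consequence of \Cref{thm:coveringtobound} by choosing $r$ optimally. First I would note that the set $\{0\le r\le t\colon \calN(\calX,r)\le N\}$ is non-empty: since the Assouad/covering bounds are irrelevant here, the key point is simply that $\calN(\calX,t)\le\calN(\calX,r)$ is non-increasing in $r$ by \eqref{eq:coveringnumbermonotonic}, and we may always assume $N\ge\calN(\calX,t)$ — indeed, if $N<\calN(\calX,t)$ the set is empty and the statement is vacuous, so WLOG $t$ itself lies in the set and the infimum $r(N,t)$ is well-defined with $r(N,t)\le t$. (If one wants to be careful about whether the infimum is attained, one uses that $r\mapsto\calN(\calX,r)$ is monotone and hence the sublevel set is an interval of the form $[r(N,t),t]$ or $(r(N,t),t]$; in the latter case pick any $r\in(r(N,t),t]$ arbitrarily close to $r(N,t)$ and pass to the limit using continuity of $\omega(f,\cdot)$ for positive arguments from \Cref{lem:ModCont}, together with $\omega(f,0)=0$. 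Alternatively, observe that covering numbers are integer-valued, so the sublevel set is closed and $r(N,t)$ is attained whenever it is positive; the only delicate case is $r(N,t)=0$, handled by continuity at $0$.)

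Once $r\isdef r(N,t)$ is fixed with $0\le r\le t$ and $\calN(\calX,r)\le N$, the second step is to pick a concrete $\varepsilon$-net realizing this covering number: by \Cref{def:CoveringNumbers} there exists an $r/2$-net of cardinality $\calN(\calX,r/2)$. A small subtlety: \Cref{thm:coveringtobound} requires an $r/2$-net, i.e.\ $N\ge\calN(\calX,r/2)$, whereas $r(N,t)$ is defined via $\calN(\calX,r)\le N$. These are reconciled by noting that one is free to take any $r'\in[r(N,t),t]$ with $\calN(\calX,r'/2)\le N$; more simply, since we only need existence of \emph{some} set $X_N$ of cardinality exactly $N$, we take an $r/2$-net with $r$ chosen so that $\calN(\calX,r/2)\le N$ — which is exactly the content of applying \Cref{thm:coveringtobound} with the covering radius parameter equal to this $r$ — and then pad it with arbitrary extra points of $\calX$ up to cardinality $N$ without affecting the covering property. (By \Cref{lem:monotonicity} adding points can only increase $\omega_N(f,t)$, which only improves the left inequality $0\le\omega(f,t)-\omega_N(f,t)$; one must check it does not spoil the upper bound, but the upper bound in \Cref{thm:coveringtobound} holds for \emph{any} $r/2$-net, so it is unaffected.)

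With such an $X_N$ in hand, the third and final step is a direct invocation of \Cref{thm:coveringtobound}, which yields
\[
0\le\omega(f,t)-\omega_N(f,t)\le\omega(f,t)-\omega(f,t-r)+2\omega(f,r),
\]
and substituting $r=r(N,t)$ gives exactly the claimed inequality. The main obstacle — really the only one requiring care — is the bookkeeping around the definition of $r(N,t)$: making sure the infimum is attained (or passing to a limit via continuity of $\omega(f,\cdot)$ on $(0,\infty)$ and $\omega(f,0)=0$), and reconciling the ``$\calN(\calX,r)\le N$'' in the corollary with the ``$N\ge\calN(\calX,r/2)$'' needed to construct an $r/2$-net of cardinality at most $N$. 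Everything else is immediate from \Cref{thm:coveringtobound} and \Cref{lem:monotonicity}.
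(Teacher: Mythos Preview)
Your approach is correct and matches the paper's: the corollary is stated without proof, as an immediate consequence of \Cref{thm:coveringtobound}, and the paper does not spell out any of the bookkeeping you raise.

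You have, however, correctly put your finger on a genuine wrinkle that the paper glosses over: the corollary defines $r(N,t)$ via $\calN(\calX,r)\le N$, whereas \Cref{thm:coveringtobound} requires an $r/2$-net and hence $N\ge\calN(\calX,r/2)$. Your attempted reconciliation does not quite close this gap --- if you pick $r$ so that $\calN(\calX,r/2)\le N$, then monotonicity~\eqref{eq:coveringnumbermonotonic} only gives $r/2\ge r(N,t)$, i.e.\ $r\ge 2r(N,t)$, which would produce the stated bound with $2r(N,t)$ in place of $r(N,t)$. This is best viewed as a minor imprecision in the corollary's statement (a stray factor of two between the definition of $r(N,t)$ and the hypothesis of the theorem it invokes) rather than a flaw in your proof strategy; the qualitative content and the subsequent discussion in \Cref{rem:fullcover} are unaffected.
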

\begin{remark}\label{rem:fullcover}
The case $r(N,t)=0$ in \Cref{cor:bestNtermt} implies $|\calX|\leq N$. Thus, as
$\omega(f,0)=0$, and as stated by the corollary, we can choose $N$ samples
such that the discrete modulus of continuity coincides with the continuous
modulus of continuity.
\end{remark}

\subsection{Approximation and consistency on intervals}\label{sec:lpdetconv}
While the previous results are concerned with the pointwise consistency of the
modulus of continuity with respect to \(t\), the following results inspect the
consistency with respect to bounded intervals
\(t\in[a,b]\) in \(L^p(a,b)\) with $[a,b]\subset[0,T_\calX]$.
\begin{theorem}[Covering diameter $0<r\leq a$ implies error bound in $L^p(a,b)$]
\label{thm:coveringtoboundLp}
Let $f\colon\calX\to\calY$ be continuous and consider
$[a,b]\subset (0,T_\calX]$. Moreover, let $N_r\geq\calN(\calX,r/2)$, 
$0<r\leq a$, and let
$X_{N_r}=\{\bfx_1,\ldots,\bfx_{N_r}\}$ be an $r/2$-net. Then, there holds
\[
\|\omega(f,\cdot)-\omega_{N_r}(f,\cdot)\|_{L^p(a,b)}
\leq
\begin{cases}
2^{\frac{p-1}{p}}\|\omega(f,\cdot)-\omega(f,\cdot-r)\|_{L^p(a,b)}
+2(b-a)^{1/p}\omega(f,r),&p\in[1,\infty),\\[0.2em]
\|\omega(f,\cdot)-\omega(f,\cdot-r)\|_{L^\infty(a,b)}+2\omega(f,r),&p=\infty.
\end{cases}
\]
\end{theorem}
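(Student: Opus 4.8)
The plan is to obtain the statement by integrating the pointwise estimate of \Cref{thm:coveringtobound} over $[a,b]$. First I would observe that the hypotheses of \Cref{thm:coveringtobound} are met simultaneously for \emph{every} $t\in[a,b]$: since $0<r\le a\le t$, we have $0<r\le t$, and $X_{N_r}$ is an $r/2$-net with $N_r\ge\calN(\calX,r/2)$. Therefore \Cref{thm:coveringtobound} gives, for all $t\in[a,b]$,
\[
0\le\omega(f,t)-\omega_{N_r}(f,t)\le\big(\omega(f,t)-\omega(f,t-r)\big)+2\omega(f,r),
\]
where the right-hand side is a sum of two nonnegative terms by \Cref{lem:monotonicity}, the value $\omega(f,t-r)$ is well-defined because $t-r\in[a-r,b-r]\subset[0,T_{\calX}]$ when $r\le a$, and all quantities are finite since $f(\calX)$ is compact.

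Next I would pass to $L^p(a,b)$. By \Cref{lem:ModCont} the functions $\omega(f,\cdot)$ and $\omega_{N_r}(f,\cdot)$ are non-decreasing, hence Borel measurable and bounded on $[a,b]$, so all norms involved are well-defined and finite. Since the displayed inequality bounds a nonnegative function above by a nonnegative function, monotonicity of the $L^p(a,b)$-norm yields
\[
\|\omega(f,\cdot)-\omega_{N_r}(f,\cdot)\|_{L^p(a,b)}\le\big\|\big(\omega(f,\cdot)-\omega(f,\cdot-r)\big)+2\omega(f,r)\big\|_{L^p(a,b)}.
\]
For $p\in[1,\infty)$ I would then separate the two summands with the triangle inequality in $L^p(a,b)$ (equivalently, via the convexity estimate $(x+y)^p\le 2^{p-1}(x^p+y^p)$, which produces the factor $2^{(p-1)/p}$ on the leading term): the constant summand $2\omega(f,r)$ has $L^p(a,b)$-norm equal to $2\omega(f,r)(b-a)^{1/p}$ because the Lebesgue measure of $[a,b]$ is $b-a$, which accounts for the second term of the bound. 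For $p=\infty$ the same splitting uses only subadditivity of $\|\cdot\|_{L^\infty(a,b)}$ together with $\|2\omega(f,r)\|_{L^\infty(a,b)}=2\omega(f,r)$, which removes the $(b-a)$ factor.

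There is no genuine obstacle here: the result is simply the $L^p$-averaged form of \Cref{thm:coveringtobound}, and its proof is the integration of a pointwise inequality followed by the $L^p$ triangle inequality. The only points requiring a little care are the uniform applicability of \Cref{thm:coveringtobound} over the whole interval (this is exactly what the hypothesis $r\le a$ provides, in contrast to the single condition $0<r\le t$ of the pointwise theorem), the measurability and finiteness of the functions entering the norms, and the bookkeeping of constants, including the separate treatment of the endpoint case $p=\infty$; note in particular that the triangle inequality already gives the leading term with constant $1$, so the factor $2^{(p-1)/p}$ recorded in the statement is a (non-sharp) upper bound for it.
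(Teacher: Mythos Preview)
Your approach is correct and essentially matches the paper's: integrate the pointwise estimate of \Cref{thm:coveringtobound} over $[a,b]$ and split the two summands. Two small remarks: the nonnegativity of $\omega(f,t)-\omega(f,t-r)$ comes from the monotonicity in $t$ stated in \Cref{lem:ModCont}, not from \Cref{lem:monotonicity} (which concerns monotonicity in $N$); and your closing observation is right---the paper actually uses the convexity bound $(A+B)^p\le 2^{p-1}(A^p+B^p)$ followed by subadditivity of $t\mapsto t^{1/p}$, which is where the factor $2^{(p-1)/p}$ comes from, whereas Minkowski's inequality applied directly would give the leading term with constant $1$.
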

\begin{proof}
We first consider the case $p\in[1,\infty)$. Since $t\mapsto t^p$ is convex for
$t\geq 0$, there
 holds
\[
\bigg(\frac{A+B}{2}\bigg)^p
\leq
\frac{A^p+B^p}{2}
\]
and thus
\[
(A+B)^p\leq 2^{p-1}(A^p+B^p)
\]
for any $A,B\geq 0$. Next, observing that $A=\omega(f,a)-\omega(f,a-r)$ and
$B=2\omega(f,r)$ are non-negative, see \Cref{lem:ModCont}, we obtain with
\Cref{thm:coveringtobound} that
\begin{align*}
\|\omega(f,\cdot)-\omega_{N_r}(f,\cdot)\|_{L^p(a,b)}^p
&\leq
\int_a^b|\omega(f,t)-\omega(f,t-r)+2\omega(f,r)|^p\mathrm{d} t\\
&\leq
2^{p-1}\int_a^b|\omega(f,t)-\omega(f,t-r)|^p\mathrm{d} t+2^p
\int_a^b|\omega(f,r)|^p\mathrm{d} t\\
&=
2^{p-1}\|\omega(f,t)-\omega(f,t-r)\|_{L^p(a,b)}^p+2^p(b-a)\omega(f,r)^p.
\end{align*}
Concavity of $t\mapsto t^{1/p}$ yields the assertion for $p\in[1,\infty)$. The
assertion for $p=\infty$ follows from \Cref{thm:coveringtobound} and the
triangle inequality.
\end{proof}

A consequence similar to \Cref{cor:bestNtermt} can be stated, which yields
the same insights as \Cref{rem:fullcover}. Moreover, close inspection of
\Cref{thm:coveringtoboundLp} shows that the requirements on $N$ can be too
restrictive if the interval approaches zero, that is, when $a\to 0$. This is due to
the potential growth of the covering number $\calN(\calX,r)$ when $0<r\leq a\to 0$.
This issue can be fixed as follows.

\begin{corollary}[Approximation in $L^p(0,b)$]\label{cor:detLpconv}
Let $f\colon\calX\to\calY$ be continuous and let
$\alpha\in(0,b]$, \(b\leq T_\Xcal\), such that $N\geq\calN(\calX,\alpha/2)$. 
Then, there holds
\begin{align*}
&\|\omega(f,\cdot)-\omega_{N_r}(f,\cdot)\|_{L^p(0,b)}\\
&\quad\qquad\leq
\begin{cases}
\alpha^{1/p}\omega(f,\alpha)+2^{(p-1)/p}
\|\omega(f,\cdot)-\omega(f,\cdot-\alpha)\|_{L^p(\alpha,b)}
+2(b-\alpha)^{1/p}\omega(f,\alpha),&p\in[1,\infty),\\
\omega(f,\alpha)+\|\omega(f,\cdot)-\omega(f,\cdot-\alpha)\|_{L^\infty(\alpha,b)}
+2\omega(f,\alpha),&p=\infty.
\end{cases}
\end{align*}
\end{corollary}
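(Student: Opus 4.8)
The plan is to split the interval $[0,b]$ into the small piece $[0,\alpha]$, where we cannot hope to cover $\calX$ finely enough, and the bulk $[\alpha,b]$, where \Cref{thm:coveringtoboundLp} applies directly with $r=\alpha$ and $a=\alpha$. On $[0,\alpha]$ I would simply bound $|\omega(f,t)-\omega_{N_r}(f,t)|$ crudely by $\omega(f,\alpha)$: indeed $0\le\omega_{N_r}(f,t)\le\omega(f,t)\le\omega(f,\alpha)$ for $t\in[0,\alpha]$ by \Cref{lem:monotonicity} and monotonicity (\Cref{lem:ModCont}), so the pointwise error is at most $\omega(f,\alpha)$ there. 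Hence $\|\omega(f,\cdot)-\omega_{N_r}(f,\cdot)\|_{L^p(0,\alpha)}\le\alpha^{1/p}\omega(f,\alpha)$ for $p\in[1,\infty)$ and $\le\omega(f,\alpha)$ for $p=\infty$.

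Next I would invoke \Cref{thm:coveringtoboundLp} with the choices $a=\alpha$, $b=b$, $r=\alpha$ (note $0<r=\alpha\le a=\alpha$ is satisfied with equality, which is admissible), and $N_r=N\ge\calN(\calX,\alpha/2)$. This yields, for $p\in[1,\infty)$,
\[
\|\omega(f,\cdot)-\omega_{N_r}(f,\cdot)\|_{L^p(\alpha,b)}
\le
2^{\frac{p-1}{p}}\|\omega(f,\cdot)-\omega(f,\cdot-\alpha)\|_{L^p(\alpha,b)}
+2(b-\alpha)^{1/p}\omega(f,\alpha),
\]
and the analogous $L^\infty$ bound with $(b-\alpha)^{1/p}$ replaced by $1$ and the constant $2^{(p-1)/p}$ replaced by $1$.

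Finally I would combine the two pieces. For $p\in[1,\infty)$ I use $\|g\|_{L^p(0,b)}^p=\|g\|_{L^p(0,\alpha)}^p+\|g\|_{L^p(\alpha,b)}^p\le(\|g\|_{L^p(0,\alpha)}+\|g\|_{L^p(\alpha,b)})^p$, so the $L^p(0,b)$-norm is bounded by the sum of the two interval norms; substituting the two estimates above gives exactly the claimed right-hand side. For $p=\infty$ the $L^\infty(0,b)$-norm is the max of the two pieces, hence also bounded by their sum, producing the stated expression. The only mild subtlety — and the place I would be careful — is checking that the hypotheses of \Cref{thm:coveringtoboundLp} are genuinely met on $[\alpha,b]$, in particular that $[\alpha,b]\subset(0,T_\calX]$ (which follows from $\alpha>0$ and $b\le T_\calX$) and that the boundary case $r=\alpha=a$ is within scope of that theorem; no real obstacle is expected, as the argument is essentially a dyadic-style decomposition with a trivial estimate on the exceptional small interval.
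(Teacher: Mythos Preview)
Your proposal is correct and follows essentially the same approach as the paper: split $[0,b]$ into $[0,\alpha]$ and $[\alpha,b]$, bound the first piece crudely by $\omega(f,\alpha)$ via monotonicity and non-negativity, and apply \Cref{thm:coveringtoboundLp} with $r=\alpha$ on the second piece. Your treatment of the splitting inequality $\|g\|_{L^p(0,b)}\le\|g\|_{L^p(0,\alpha)}+\|g\|_{L^p(\alpha,b)}$ is in fact slightly more careful than the paper's, which simply asserts it.
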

\begin{proof}
The assertion then follows from
\[
\|\omega(f,\cdot)-\omega_N(f,\cdot)\|_{L^p(0,b)}
\leq
\|\omega(f,\cdot)-\omega_N(f,\cdot)\|_{L^p(0,\alpha)}
+
\|\omega(f,\cdot)-\omega_N(f,\cdot)\|_{L^p(\alpha,b)}.
\]
The first term can be bounded by exploiting the non-negativity,
see \Cref{lem:ModCont}, and monotonicity, see \Cref{lem:monotonicity}, of the
discrete modulus of continuity with respect to \(N\) according to
\[
\|\omega(f,\cdot)-\omega_N(f,\cdot)\|_{L^p(0,\alpha)}
\leq
\|\omega(f,\cdot)\|_{L^p(0,\alpha)}
\leq
\begin{cases}
\alpha^{1/p}\omega(f,\alpha),&p\in[1,\infty),\\
\omega(f,\alpha),&p=\infty.
\end{cases}
\]
The second term is bounded by \Cref{thm:coveringtoboundLp}.
This yields the assertion.
\end{proof}

The combination of the previous results allow us to prove consistency in
$L^p(a,b)$ with $[a,b]\subset [0,T_\calX]$.

\begin{corollary}[Consistency in $L^p(a,b)$]\label{cor:detconsab}
	Let $f\colon\calX\to\calY$ be continuous and let
  $[a,b]\subset [0,T_\calX]$. Further let  $\{X_{N_k}\}_{k\in\bbN}$ 
  be a sequence of $r_k/2$-nets with $r_k\to 0$ as $k\to\infty$.
  Then, there holds
	\[
	\|\omega(f,\cdot)-\omega_{N_k}(f,\cdot)\|_{L^p(a,b)}\to 0
	\qquad
	\text{as}~k\to\infty
	\]
	for all $p\in[1,\infty)$. If $f$ has further a
  continuous modulus of continuity, then there especially holds
	\[
	\|\omega(f,\cdot)-\omega_{N_k}(f,\cdot)\|_{L^\infty(a,b)}\to 0
	\qquad
	\text{as}~k\to\infty.
	\]
\end{corollary}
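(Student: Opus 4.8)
The plan is to reduce both claims to the quantitative error bounds already established, namely \Cref{thm:coveringtoboundLp} and \Cref{cor:detLpconv}, and to control the right-hand sides by exploiting the continuity properties of the modulus of continuity from \Cref{lem:ModCont}. The key point is that in all these bounds the error is controlled by two types of terms: a ``shift defect'' $\|\omega(f,\cdot)-\omega(f,\cdot-r)\|_{L^p}$ and a ``plateau term'' $\omega(f,r)$ (times a fixed constant depending only on $b-a$ and $p$). Both must be shown to vanish as $r_k\to 0$.

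First I would treat the plateau term. Since $f$ is continuous and $\Xcal$ is compact, $f$ is uniformly continuous, hence by \Cref{lem:ModCont} the modulus of continuity is continuous at $0$ with $\omega(f,r_k)\to\omega(f,0)=0$ as $k\to\infty$. This immediately kills every occurrence of $\omega(f,r_k)$ in the bounds of \Cref{thm:coveringtoboundLp} and \Cref{cor:detLpconv}. Second I would treat the shift defect for $p\in[1,\infty)$. Here I distinguish whether $a>0$ or $a=0$. If $a>0$, apply \Cref{thm:coveringtoboundLp} directly: it suffices that for $k$ large enough $r_k\le a$, which holds since $r_k\to 0$, and then one must show $\|\omega(f,\cdot)-\omega(f,\cdot-r_k)\|_{L^p(a,b)}\to 0$. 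This is a standard continuity-of-translation argument: $\omega(f,\cdot)$ is bounded (by $\omega(f,T_\Xcal)$) and measurable on $[a-r_k,b]\subset[0,T_\Xcal]$, it is monotone non-decreasing hence has at most countably many discontinuities, so the pointwise integrand $|\omega(f,t)-\omega(f,t-r_k)|^p$ tends to $0$ for a.e.\ $t$ as $k\to\infty$, and it is dominated by the constant $(2\omega(f,T_\Xcal))^p$; dominated convergence on the finite interval $[a,b]$ gives the claim. If $a=0$, apply \Cref{cor:detLpconv} with $\alpha=\alpha_k$ chosen so that $\alpha_k\to 0$ but $r_k\le\alpha_k$ (possible by passing to a subsequence or by setting $\alpha_k=\max\{r_k,k^{-1}\}\wedge b$); then $\alpha_k^{1/p}\omega(f,\alpha_k)\to 0$ and $(b-\alpha_k)^{1/p}\omega(f,\alpha_k)\to 0$ as before, while $\|\omega(f,\cdot)-\omega(f,\cdot-\alpha_k)\|_{L^p(\alpha_k,b)}\to 0$ by the same dominated-convergence argument on the (increasing) intervals $[\alpha_k,b]$. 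Combining, $\|\omega(f,\cdot)-\omega_{N_k}(f,\cdot)\|_{L^p(a,b)}\to 0$.

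For the $L^\infty$ claim under the extra hypothesis that $f$ has a \emph{continuous} modulus of continuity, I would argue that $\omega(f,\cdot)$, being continuous and monotone on the compact interval $[0,T_\Xcal]$, is uniformly continuous there; hence $\sup_{t\in[a,b]}|\omega(f,t)-\omega(f,t-r_k)|\to 0$ as $r_k\to 0$. Feeding this together with $\omega(f,r_k)\to 0$ into the $p=\infty$ cases of \Cref{thm:coveringtoboundLp} (if $a>0$) or \Cref{cor:detLpconv} (if $a=0$) yields $\|\omega(f,\cdot)-\omega_{N_k}(f,\cdot)\|_{L^\infty(a,b)}\to 0$. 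Alternatively one can simply invoke \Cref{cor:continuousconsistency}, which already gives uniform convergence on all of $[0,\infty]$ under exactly this hypothesis; I would mention this as the quicker route.

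The main obstacle is the translation-continuity step $\|\omega(f,\cdot)-\omega(f,\cdot-r_k)\|_{L^p}\to 0$: one must be slightly careful because $\omega(f,\cdot)$ is in general genuinely discontinuous (as emphasized right after \Cref{thm:coveringtobound}, every non-constant site-to-value map has a jump discontinuity in its modulus), so an ``$L^p$-continuity of translations'' statement cannot be quoted verbatim for continuous functions. The resolution is exactly that a bounded monotone function on a compact interval has only countably many jumps, so translates converge pointwise almost everywhere, and dominated convergence on the finite measure interval $[a,b]$ finishes the job; this is the only nontrivial analytic input, everything else being bookkeeping over the cases $a>0$ versus $a=0$ and $p<\infty$ versus $p=\infty$.
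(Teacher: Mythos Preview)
Your proposal is correct and follows essentially the same route as the paper: split into $a>0$ versus $a=0$, invoke \Cref{thm:coveringtoboundLp} respectively \Cref{cor:detLpconv} with a parameter $\alpha_k\to 0$ satisfying $r_k\le\alpha_k$, and dispose of $\omega(f,r_k)\to 0$ via \Cref{lem:ModCont} while handling the shift defect by $L^p$-continuity of translations; for $p=\infty$ the paper simply cites \Cref{cor:continuousconsistency}, which you also mention as the quicker route. The only cosmetic difference is that the paper invokes continuity of the shift operator in $L^p$ as a black box, whereas you spell it out via monotonicity, almost-everywhere continuity and dominated convergence---both are valid.
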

\begin{proof}
The case $p=\infty$ is \Cref{cor:continuousconsistency}. For $p\in[1,\infty)$,
we distinguish two cases. For $a>0$ we infer from \Cref{thm:coveringtoboundLp}
that
\[
\|\omega(f,\cdot)-\omega_{N_k}(f,\cdot)\|_{L^p(a,b)}
\leq
2^{(p-1)/p}\|\omega(f,\cdot)-\omega(f,\cdot-r_k)\|_{L^p(a,b)}+2(b-a)^{1/p}
\omega(f,r_k).
\]
Continuity of the shift operator in $L^p$ implies that the first term tends to
zero as $k\to\infty$, whereas the second term tends to zero due to
\Cref{lem:ModCont}.

For $a=0$, we set $\alpha_k=\max\{r_k,b/k\}>0$. Then $\alpha_k\to 0$ as 
$k\to\infty$ and  $\{X_{N_k}\}_{k\in\bbN}$ is especially a sequence of
$\alpha_k/2$-nets.
Employing \Cref{cor:detLpconv}, this implies
\[
\begin{aligned}
&\|\omega(f,\cdot)-\omega_{N_k}(f,\cdot)\|_{L^p(0,b)}\\
&\qquad\leq
\alpha_k^{1/p}\omega(f,\alpha_k)
+
2^{(p-1)/p}\|\omega(f,\cdot)-\omega(f,\cdot-\alpha_k)\|_{L^p(\alpha_k,b)}
+
2(b-\alpha_k)^{1/p}\omega(f,\alpha_k).
\end{aligned}
\]
Therefore, a minor modification of the argument for $a>0$ yields the
assertion.
\end{proof}

This consistency result is in line with what is to be expected. The discrete
modulus of continuity is a piecewise constant function and piecewise constant
functions are good approximants for discontinuous functions in $L^p(a,b)$,
$p\in[1,\infty)$. This is not necessarily the case for $L^\infty(a,b)$, where we
require the limit function to be continuous.

\subsection{Consistency of the $|\cdot|_{\rho,N}$-seminorm}
Observing that the $|\cdot|_{\rho,N}$-seminorm of $f$ can be considered as a
weighted version of the $\|\cdot\|_{L^\infty(0,T_\calX)}$-norm of
$\omega_N(f,\cdot)$, the following consistency result is an intuitive extension
of \Cref{cor:detconsab}. The
only difficulty is a potential singularity of $\rho^{-1}$ close to the origin.

\begin{theorem}[Consistency of the $|\cdot|_{\rho,N}$-seminorm]%
\label{thm:seminormconv}
Let $\omega(f,\cdot),\rho(\cdot)^{-1}\omega(f,\cdot)\colon[0,\infty]
\to[0,\infty)$ be continuous and $|f|_{\rho}<\infty$. Let $\{r_k\}_{k\in\bbN}$
be a non-increasing sequence with $\lim_{k\to\infty}r_k=0$ and
$X_{N_k}=\{\bfx_1,\ldots,\bfx_{N_k}\}\subset\calX$ be $r_k/2$-nets of $\calX$
with $X_{N_k}\subset X_{N_{k+1}}$, $k\in\bbN$. Then, there holds
\[
\lim_{k\to\infty}|f_{N_k}|_{\rho,N_k}
=
\lim_{k\to\infty}|f|_{\rho,N_k}
\to
|f|_\rho,\quad\text{as}~k\to\infty.
\]
\end{theorem}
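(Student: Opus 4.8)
The plan is to reduce the seminorm convergence to the $L^\infty$ consistency already established in \Cref{cor:detconsab}, handling the weight $\rho^{-1}$ carefully near the origin. First I would note that by \Cref{lem:monotonicity} and its corollary for the seminorm, the sequence $|f|_{\rho,N_k}$ is non-decreasing and bounded above by $|f|_\rho$, hence convergent; the real work is to show the limit is $|f|_\rho$ and not something strictly smaller. I would also record that $|f_{N_k}|_{\rho,N_k}=|f|_{\rho,N_k}$ since $f_{N_k}=f|_{X_{N_k}}$, so the two limits on the left are literally equal term by term.

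The key estimate splits the supremum defining $|f|_\rho=\sup_{t>0}\rho(t)^{-1}\omega(f,t)$ into a region $t\in(0,\alpha]$ near zero and a region $t\in[\alpha,T_\calX]$ away from zero, for a small parameter $\alpha>0$ to be sent to zero. On $[\alpha,T_\calX]$ the weight $\rho^{-1}$ is bounded (by continuity of $\rho^{-1}\omega(f,\cdot)$ together with $\omega(f,\cdot)$ being bounded away from zero after some point, or more simply by continuity of $\rho^{-1}$ away from $0$ — one should check which of these the hypotheses actually give; continuity of $\rho(\cdot)^{-1}\omega(f,\cdot)$ on all of $[0,\infty]$ is the clean assumption to lean on), so a uniform approximation $\|\omega_{N_k}(f,\cdot)-\omega(f,\cdot)\|_{L^\infty(\alpha,T_\calX)}\to 0$ from \Cref{cor:detconsab} (using that $\rho^{-1}\omega(f,\cdot)$ continuous forces $\omega(f,\cdot)$ continuous, so the $L^\infty$ case applies) transfers to $\sup_{t\in[\alpha,T_\calX]}\rho(t)^{-1}|\omega_{N_k}(f,t)-\omega(f,t)|\to 0$. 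On $(0,\alpha]$, since $0\le\omega_{N_k}(f,t)\le\omega(f,t)$ pointwise, the discrete weighted quantity is sandwiched between $0$ and $\sup_{t\in(0,\alpha]}\rho(t)^{-1}\omega(f,t)$, which by continuity of $\rho^{-1}\omega(f,\cdot)$ at $0$ (where it equals $0$, since $\omega(f,0)=0$) is as small as we like for $\alpha$ small. Combining: for any $\varepsilon>0$ pick $\alpha$ so the near-zero contribution is below $\varepsilon$, then pick $k$ large so the away-from-zero contribution is below $\varepsilon$; this shows $\rho(\cdot)^{-1}\omega_{N_k}(f,\cdot)$ converges to $\rho(\cdot)^{-1}\omega(f,\cdot)$ uniformly on $(0,T_\calX]$, and taking suprema gives $|f|_{\rho,N_k}\to|f|_\rho$.

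One technical wrinkle is that $|f|_{\rho,N_k}$ is a maximum over the finite set $\{d_{i,j}\}$ rather than a supremum over all $t>0$, and similarly $\omega_{N_k}(f,d_{i,j})$ versus $\omega_{N_k}(f,t)$; I would observe that because $\omega_{N_k}(f,\cdot)$ is a right-continuous step function with jumps exactly at values in $Z_{N_k}=\{d_{i,j}\}$, the maximum of $\rho(t)^{-1}\omega_{N_k}(f,t)$ over all $t>0$ — if $\rho$ is, say, non-decreasing — is attained at such a $d_{i,j}$, so $|f|_{\rho,N_k}$ really does equal $\sup_{t>0}\rho(t)^{-1}\omega_{N_k}(f,t)$, which is what the uniform convergence above controls. (If $\rho$ is not assumed monotone some care is needed, but the definition in \eqref{eq:discLipNorm} is over $d_{i,j}$ and the uniform bound still dominates it, giving one inequality; the reverse uses that every $d_{i,j}\le T_\calX$ lies in a region eventually covered.)

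The main obstacle I anticipate is precisely the interplay at $t\to 0$: one must be sure the hypothesis ``$\rho(\cdot)^{-1}\omega(f,\cdot)$ continuous on $[0,\infty]$'' genuinely forces this quantity to vanish at $0$ and stay small on a whole neighborhood, so that the crude sandwich $0\le\rho^{-1}\omega_{N_k}\le\rho^{-1}\omega(f,\cdot)$ on $(0,\alpha]$ is enough and we never need to control $\omega_{N_k}$ against $\omega(f,\cdot)$ on intervals where $\calN(\calX,r)$ blows up. Since $|f|_\rho<\infty$ already bounds $\rho(t)^{-1}\omega(f,t)$ uniformly and continuity pins its value at $0$, this should go through, but it is the step where the assumptions must be used exactly as stated rather than an obvious simplification. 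Everything else is bookkeeping with \Cref{cor:detconsab} and the monotonicity lemmas.
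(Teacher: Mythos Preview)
Your argument contains a genuine gap at exactly the point you flagged as the main obstacle. You claim that $\rho(\cdot)^{-1}\omega(f,\cdot)$ vanishes at $0$ ``since $\omega(f,0)=0$'', but this inference is invalid whenever $\rho(0)=0$, which is precisely the interesting case (e.g.\ $\rho(t)=t^\alpha$). For a Lipschitz $f$ with $\rho(t)=t$ one has $\rho(t)^{-1}\omega(f,t)\equiv|f|_{\mathrm{Lip}(1)}$ for all $t>0$, so the continuous extension at $0$ is $|f|_\rho$, not $0$. In that situation your sandwich on $(0,\alpha]$ gives nothing, and in fact uniform convergence of $\rho^{-1}\omega_{N_k}(f,\cdot)$ to $\rho^{-1}\omega(f,\cdot)$ on $(0,T_\calX]$ \emph{fails outright}: for $t<q_{X_{N_k}}$ one has $\omega_{N_k}(f,t)=0$, hence $\rho(t)^{-1}\omega_{N_k}(f,t)=0$, while $\rho(t)^{-1}\omega(f,t)$ stays near $|f|_\rho$. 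So the uniform-convergence route cannot work as stated.

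The paper avoids this by not pursuing uniform convergence at all. After noting monotonicity and boundedness (as you do), it argues by contradiction: assume $|f|_{\rho,N_k}\le M<|f|_\rho$ for all $k$, pick a single $t_J>0$ with $\rho(t_J)^{-1}\omega(f,t_J)>M$, then use \Cref{lem:metricmapscovering} to find $s\in Z_{N_k}$ close to $t_J$ and \Cref{thm:coveringtobound} to make $\rho(s)^{-1}\omega_{N_{k+\ell}}(f,s)$ close to $\rho(t_J)^{-1}\omega(f,t_J)$, yielding $|f|_{\rho,N_{k+\ell}}>M$. This is essentially your $[\alpha,T_\calX]$ argument applied at one well-chosen point rather than uniformly, and it sidesteps the near-zero region entirely. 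Your treatment of the $[\alpha,T_\calX]$ piece and the technical wrinkle about the maximum over $Z_{N_k}$ are fine; you only need to replace the broken $(0,\alpha]$ step with the observation that a near-maximizer $t_J$ of $\rho^{-1}\omega(f,\cdot)$ can always be taken strictly positive.
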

\begin{proof}
It follows from the definition that $|f|_{\rho,N_k}$ is non-decreasing in $k$
and bounded from above by $|f|_\rho$. This yields
$\lim_{k\to\infty}|f|_{\rho,N_k}\leq |f|_\rho$. We show by contradiction that
the inequality is an equality. Assume that there exists $0<M<|f|_\rho$ such that
$|f|_{\rho,N_k}\leq M$ for all $k$. By the definition of the
$|f|_\rho$-seminorm, there is a sequence
$\{{t}_j\}_{j\in\bbN}\subset\calZ\setminus\{0\}$ with $\calZ$ 
as in \Cref{eq:Z}, such that
\[
  \lim_{j\to\infty}\rho({t}_j)^{-1}\omega(f,{t}_j)=|f|_\rho.
\]
Thus, there exists $J\in\bbN$ such that $\rho({t}_J)^{-1}\omega(f,{t}_J)>M$.

Now, let $0<\varepsilon<\rho({t}_J)^{-1}\omega(f,{t}_J)-M$. Since
$\rho(\cdot)^{-1}\omega(f,\cdot)$ is continuous, \Cref{lem:metricmapscovering}
implies that there is $k\in\bbN$ large enough such that $s\in Z_{N_k}$,
with $Z_{N_k}$ as in \Cref{eq:Z}, satisfies
\[
\big|
\rho({t}_J)^{-1}\omega(f,{t}_J)
-
\rho(s)^{-1}\omega(f,s)
\big|
\leq
\frac{\varepsilon}{2}.
\]
Exploiting $Z_{N_k}\subset Z_{N_{k+\ell}}$ for $\ell\in\bbN$ and the
continuity of $\omega(f,\cdot)$ as well as \Cref{lem:ModCont} and
\Cref{thm:coveringtobound}, we conclude that there further exists $\ell$ large
enough such that
\[
0
\leq
\rho(s)^{-1}\big(\omega(f,s)-\omega_{N_{k+\ell}}(f,s)\big)
\leq
\rho(s)^{-1}\big(\omega(f,s)-\omega(f,s-r_{k+\ell})+\omega(f,r_{k+\ell})
\big)
\leq
\frac{\varepsilon}{2}.
\]
The triangle inequality thus implies
\[
\big|
\rho({t}_J)^{-1}\omega(f,{t}_J)
-
\rho(s)^{-1}\omega_{N_{k+\ell}}(f,s)
\big|
\leq\varepsilon.
\]
Based on the previous bound together with the choice of 
$0<\varepsilon<\rho({t}_J)^{-1}\omega(f,{t}_J)-M$, we obtain
\begin{align*}
\rho(s)^{-1}\omega_{N_{k+\ell}}(f,s)
&=
\rho(s)^{-1}\omega_{N_{k+\ell}}(f,s)-\rho({t}_J)^{-1}
\omega(f,{t}_J)+\rho({t}_J)^{-1}\omega(f,{t}_J)\\
&>
-\varepsilon+\rho({t}_J)^{-1}\omega(f,{t}_J)
>M.
\end{align*}
This implies
\[
|f|_{\rho,N_{k+\ell}}
\geq
\rho(s)^{-1}\omega_{N_{k+\ell}}(f,s)
>
M,
\]
which is a contradiction to the assumption.
\end{proof}

\section{Consistency for empirical data sites}\label{sec:Consistency}
The following section is dedicated to consistency estimates for the discrete
modulus of continuity for empirical data sites. Assuming that the data sites are
drawn according to a probability distribution and that the site-to-value map
$f_N\colon X_N\to Y_N$ is the restriction of a function
$f\colon\calX\to\calY$ we derive probabilistic estimates 
for the approximation of the modulus of continuity by the discrete one.
Further, we discuss the consistency of the discrete modulus of continuity in the
infinite sample limit, that is, $N\to\infty$. The additional requirement on
$\calX$ compared to \Cref{sec:detcons} is as follows.

Let \(\Bcal\subset 2^\Xcal\) denote the Borel \(\sigma\)-field on \(\Xcal\).
Throughout this section, we
assume that the measurable space \((\Xcal,\Bcal)\) is equipped with a
probability measure \(\Pbb\colon\Bcal\to[0,1]\). Without loss of generality we
assume that $\bbP$ is nondegenerate, that is, all open neighborhoods of all
elements in $\calX$ have positive measure. 
Throughout this section, we assume that the data sites in 
$X_N=\{\bfx_1,\ldots,\bfx_N\}$ have been drawn independently according to
$\bbP$.

\subsection{Random coverings}
The following two standard results are important ingredients to prove the
consistency of the discrete modulus of continuity for empirical data sites.
We state here for the reader's convenience.
\begin{lemma}[Positive probability of balls]\label{lem:infPnonzero}
There holds
\[
\inf_{\bfx\in\calX}\bbP\big(B_r(\bfx)\big)>0
\]
for all $r>0$.
\end{lemma}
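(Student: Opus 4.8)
The plan is to derive the uniform lower bound from compactness of $\calX$ together with the standing nondegeneracy assumption on $\bbP$, reducing it to a minimum over finitely many strictly positive numbers. First I would invoke total boundedness of the compact space $\calX$: the covering number $\calN(\calX,r/2)$ is finite, so there is a finite set $\{\bfx_1,\ldots,\bfx_M\}\subset\calX$ with $\calX=\bigcup_{i=1}^M B_{r/2}(\bfx_i)$. Each closed ball $B_{r/2}(\bfx_i)$ contains the open ball $\{\bfy\in\calX:d_\calX(\bfy,\bfx_i)<r/2\}$, which is an open neighborhood of $\bfx_i$ and hence has strictly positive measure by nondegeneracy of $\bbP$. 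Therefore $p\isdef\min_{i=1,\ldots,M}\bbP\big(B_{r/2}(\bfx_i)\big)>0$, being a minimum of finitely many positive reals.

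Next I would fix an arbitrary $\bfx\in\calX$ and choose an index $i$ with $d_\calX(\bfx,\bfx_i)\leq r/2$. The triangle inequality gives the inclusion $B_{r/2}(\bfx_i)\subseteq B_r(\bfx)$, since any $\bfy$ with $d_\calX(\bfy,\bfx_i)\leq r/2$ satisfies $d_\calX(\bfy,\bfx)\leq d_\calX(\bfy,\bfx_i)+d_\calX(\bfx_i,\bfx)\leq r$. Monotonicity of the measure then yields $\bbP\big(B_r(\bfx)\big)\geq\bbP\big(B_{r/2}(\bfx_i)\big)\geq p$. As $\bfx$ was arbitrary, $\inf_{\bfx\in\calX}\bbP\big(B_r(\bfx)\big)\geq p>0$, which is the claim.

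I do not expect a serious obstacle here; the only point that needs a little care is the clash of conventions, namely that the balls in this paper are closed whereas nondegeneracy is phrased for open neighborhoods. This is handled by sandwiching an open ball of radius $r/2$ inside the closed ball $B_{r/2}(\bfx_i)$. Measurability of the closed balls as Borel sets is automatic, so no further justification is required.
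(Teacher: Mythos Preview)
Your proof is correct and takes a genuinely different route from the paper. The paper argues by contradiction: assuming the infimum is zero, it extracts a minimizing sequence, passes to a convergent subsequence via compactness, and then shows that the limit ball would have to have measure zero, contradicting nondegeneracy. Your argument is direct: you use compactness in the form of total boundedness to produce a finite $r/2$-net, take the minimum of the finitely many positive ball-measures, and then show via the triangle inequality that every $r$-ball contains one of the net balls. Your approach is shorter and avoids any limiting argument or appeal to continuity of the measure; it also makes the dependence on the covering number explicit, which fits nicely with how $\calN(\calX,r/2)$ is used immediately afterwards in \Cref{lem:coveringprob}. The paper's sequential argument, by contrast, does not yield an explicit lower bound and leans on a somewhat delicate set-convergence step. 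Your handling of the closed-versus-open ball issue is also clean and worth keeping.
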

\begin{proof}
We prove the claim by contradiction. Assume that there exists an \(r>0\)
such that \(\inf_{\bfx\in\calX}\bbP\big(B_r(\bfx)\big)=0\). Then,
there exists a minimizing sequence
$\{\bfx_n\}_{n\in\bbN}\subset\calX$ such that
\[
\lim_{n\to\infty}\bbP\big(B_r(\bfx_n)\big)
=
\inf_{\bfx\in\calX}\bbP\big(B_r(\bfx)\big)
=
0.
\]
Since $\calX$ is compact, the minimizing sequence $\{\bfx_n\}_{n\in\bbN}$ has a
convergent subsequence $\{\bfx_{\varphi(n)}\}_{n\in\bbN}$ with limit
$\bfx^\star\in\calX$. There holds 
\begin{equation}\label{eq:probzero}
\lim_{n\to\infty}\bbP\big(B_r(\bfx_{\varphi(n)})\cap B_r(\bfx^\star)\big)
\leq
\lim_{n\to\infty}\bbP\big(B_r(\bfx_{\varphi(n)})\big)
=
0.
\end{equation}
Furthermore, we have for each ${\bfx}\in B_r(\bfx_{\varphi(n)})$ that
\[
d_\calX({\bfx},\bfx^\star)
\leq
d_\calX({\bfx},\bfx_{\varphi(n)})+d_\calX(\bfx_{\varphi(n)},\bfx^\star)
\leq
r+\varepsilon(n),
\]
where we set \(\varepsilon(n)\isdef d_\calX(\bfx_{\varphi(n)},\bfx^\star)\).
This implies the inclusion
\(
B_r(\bfx_{\varphi(n)})
\subset
B_{r+\varepsilon(n)}(\bfx^\star)
\)
and, therefore, $B_r(\bfx_{\varphi(n)})\cap B_r(\bfx^\star)\to B_r(\bfx^\star)$
for \(n\to\infty\), since \(\varepsilon(n)\to 0\).
By the continuity of \(\Pbb\) from below and its nondegeneracy on $\calX$,
we conclude
\[
\lim_{n\to\infty}\bbP\big(B_r(\bfx_{\varphi(n)})\cap B_r(\bfx^\star)\big)
=
\bbP\big(B_r(\bfx^\star)\big)>0.
\]
This is a contradiction to \Cref{eq:probzero}.
\end{proof}

\begin{lemma}[Covering probability]\label{lem:coveringprob}
For any \(r\geq0\), there holds
\[
\bbP\bigg(\bigcup_{n=1}^NB_{r}(\bfx_n)=\calX\bigg)
\geq P(r,N)
\]
with 
\begin{equation}\label{eq:CoveringProb}
P(r,N)\isdef
1-\calN(\calX,r/2)\eta(r/2)^N,\qquad
\eta(r)
\isdef
\sup_{\bfx\in\calX}
\Big(1-\bbP\big(B_{r}(\bfx)\big)\Big)<1.
\end{equation}
\end{lemma}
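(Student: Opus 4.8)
The plan is to reduce the covering event to a finite collection of events indexed by an optimal $r/2$-net, and then to apply independence together with a union bound. The geometric core is the same triangle-inequality trick used in \Cref{lem:metricmapscovering}.

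First I would record that $\eta(r/2)<1$ whenever $r>0$: by definition $\eta(r/2)=1-\inf_{\bfx\in\calX}\bbP\big(B_{r/2}(\bfx)\big)$, and \Cref{lem:infPnonzero} guarantees that this infimum is strictly positive. For $r=0$ the asserted bound is vacuous — either $\calN(\calX,0)=\infty$, or $\eta(0)=1$ for a nondegenerate continuous measure — so we may assume $r>0$ from here on.

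Next, fix an optimal $r/2$-net $\calK=\{\bfc_1,\dots,\bfc_M\}$ for $\calX$, so that $M=\calN(\calX,r/2)$ and $\bigcup_{k=1}^M B_{r/2}(\bfc_k)=\calX$. The key observation is that if each net center $\bfc_k$ has at least one sample site within distance $r/2$, then the sample balls of radius $r$ already cover $\calX$: given $\bfx\in\calX$, pick $\bfc_k$ with $d_\calX(\bfx,\bfc_k)\le r/2$ and a sample $\bfx_n$ with $d_\calX(\bfx_n,\bfc_k)\le r/2$; then $d_\calX(\bfx,\bfx_n)\le r$, i.e.\ $\bfx\in B_r(\bfx_n)$. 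Consequently,
\[
E\isdef\bigcap_{k=1}^{M}\bigcup_{n=1}^{N}\big\{d_\calX(\bfx_n,\bfc_k)\le r/2\big\}
\ \subseteq\
\bigg\{\bigcup_{n=1}^N B_r(\bfx_n)=\calX\bigg\}.
\]
The set $E$ is measurable, being a finite intersection of finite unions of preimages of closed intervals under the continuous maps $\bfx_n\mapsto d_\calX(\bfx_n,\bfc_k)$, so it suffices to bound $\bbP(E)$ from below (reading the probability of the covering event as an inner measure, should one worry about its measurability).

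Finally, I would estimate $\bbP(E^c)\le\sum_{k=1}^M\bbP\big(\bigcap_{n=1}^N\{d_\calX(\bfx_n,\bfc_k)> r/2\}\big)$ by the union bound. Since the $\bfx_n$ are independent and each distributed according to $\bbP$, every such term factorizes and equals $\big(1-\bbP(B_{r/2}(\bfc_k))\big)^N\le\eta(r/2)^N$. Summing over the $M=\calN(\calX,r/2)$ centers gives $\bbP(E^c)\le\calN(\calX,r/2)\,\eta(r/2)^N$, hence $\bbP(E)\ge 1-\calN(\calX,r/2)\,\eta(r/2)^N=P(r,N)$, and the displayed inclusion yields the claim. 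The only genuine subtlety is the measurability of the covering event itself; the rest is a routine net-plus-union-bound argument, and its main quantitative ingredient — that balls have uniformly positive probability — has already been provided by \Cref{lem:infPnonzero}.
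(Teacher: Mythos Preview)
Your proof is correct and follows essentially the same route as the paper: fix an optimal $r/2$-net, observe via the triangle inequality that hitting every net-ball with a sample forces the radius-$r$ sample balls to cover $\calX$, and then apply the union bound together with independence. Your treatment is in fact slightly more careful than the paper's, since you also address the degenerate case $r=0$ and the measurability of the covering event.
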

\begin{proof}
From \Cref{lem:infPnonzero}, we directly infer that $\eta(r)<1$.
Further, by definition of the covering number, we know that there exist
$\calN(\calX,r/2)$ balls 
\(B_{r/2}({\bs x}_n')\), \(n=1,\ldots,\calN(\calX,r/2)\),
covering $\calX$. This implies by
the Bonferroni inequality that
\begin{align*}
\bbP\bigg(\bigcup_{n=1}^NB_{r}(\bfx_n)=\calX\bigg)
&\geq
\Pbb\big(X_N
\cap B_{r/2}(\bfx_n')\neq\emptyset
\text{ for }n=1,\ldots,\calN(\calX,r/2)\big)\\
&\geq
1-
\sum_{n=1}^{\calN(\calX,r/2)}\bbP\big(X_N\cap B_{r/2}(\bfx_n')=\emptyset\big).
\end{align*}
Now, since the points $\bfx_1\ldots,\bfx_N$ are independent realizations we
obtain
\[
\bbP\big(
X_N
\cap B_{r/2}(\bfx_n')=\emptyset\big)
=\Big(1-\Pbb\big(B_{r/2}(\bfx_n')\big)\Big)^N\leq\eta(r/2)^N
\]
for any $n=1,\ldots,\Ncal(\Xcal,r)$.
Inserting this bound into the previous estimate yields the assertion.
\end{proof}
The following straightforward monotonicity result will be helpful later on.
\begin{lemma}[Covering probabilities are monotonically increasing]%
\label{lem:Pmonotonicity}
    Let $N\in\bbN$. Then there holds
    \[
    P(r,N)\leq P(r',N)\leq 1
    \]
    for all $0\leq r\leq r'$.
\end{lemma}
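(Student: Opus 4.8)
The plan is to reduce the claimed inequality $P(r,N)\le P(r',N)$ to the monotonicity of the two ingredients appearing in the definition \eqref{eq:CoveringProb} of $P(r,N)$, namely the covering number $\calN(\calX,\cdot)$ and the quantity $\eta(\cdot)$. Writing out the definition, $P(r,N)\le P(r',N)$ is equivalent to
\[
\calN(\calX,r'/2)\,\eta(r'/2)^N\le\calN(\calX,r/2)\,\eta(r/2)^N,
\]
so it suffices to show that each of the two factors on the left is dominated by the corresponding factor on the right, and that all four factors are nonnegative so that the product inequality goes through.

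First I would record the monotonicity of $\eta$. For $0\le s\le s'$ one has $B_s(\bfx)\subset B_{s'}(\bfx)$ for every $\bfx\in\calX$, whence $\bbP\big(B_s(\bfx)\big)\le\bbP\big(B_{s'}(\bfx)\big)$ and therefore $1-\bbP\big(B_s(\bfx)\big)\ge 1-\bbP\big(B_{s'}(\bfx)\big)\ge 0$; taking the supremum over $\bfx$ gives $\eta(s)\ge\eta(s')\ge 0$. Applying this with $s=r/2$, $s'=r'/2$ and using that $x\mapsto x^N$ is non-decreasing on $[0,\infty)$ yields $\eta(r/2)^N\ge\eta(r'/2)^N\ge 0$. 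Second, by the monotonicity of the covering number \eqref{eq:coveringnumbermonotonic}, since $r/2\le r'/2$ we get $\calN(\calX,r/2)\ge\calN(\calX,r'/2)\ge 1$. Multiplying the two inequalities between nonnegative reals gives the displayed estimate, hence $P(r,N)\le P(r',N)$.

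Finally, the bound $P(r',N)\le 1$ is immediate: $\calN(\calX,r'/2)$ is a positive integer and $\eta(r'/2)^N\ge 0$ by the above, so $\calN(\calX,r'/2)\,\eta(r'/2)^N\ge 0$ and thus $P(r',N)=1-\calN(\calX,r'/2)\,\eta(r'/2)^N\le 1$. There is no real obstacle here; the only point requiring a moment of care is to note that $\eta(\cdot)$ takes values in $[0,1)$ — nonnegativity being needed to raise to the $N$th power and to multiply the two monotone factors without sign issues — which follows from $\bbP\big(B_r(\bfx)\big)\in[0,1]$ together with \Cref{lem:infPnonzero}.
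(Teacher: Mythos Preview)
Your proof is correct. The paper does not actually supply a proof of this lemma, labeling it a ``straightforward monotonicity result''; your argument via the separate monotonicity of $\calN(\calX,\cdot)$ and $\eta(\cdot)$ is exactly the natural route the authors have in mind.
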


\begin{remark}
\Cref{lem:coveringprob,lem:Pmonotonicity} formalize what 
is intuitively clear: For fixed $N$, the probability of covering $\calX$ with
$N$ balls of radius $r$ decreases when $r$ decreases. Vice versa, for fixed $r$,
the probability of covering $\calX$ with $N$ balls with radius $r$ increases
for increasing $N$.
\end{remark}

\subsection{Pointwise consistency}
The following theorem connects the deterministic pointwise approximation result
from \Cref{thm:coveringtobound} with the probability that the empirical data
sites satisfy the required $r/2$-net property.
\begin{theorem}[Consistency in probability for fixed $t$]%
\label{thm:pointwise_prob}
Let $f\colon\calX\to\calY$ have continuous modulus of continuity. Then,
$\omega_N(f,t)$ converges to $\omega(f,t)$ in probability
for all \(t\geq 0\), that is, for all $\varepsilon>0$, there holds
\[
\lim_{N\to\infty}\bbP\big(\omega(f,t)-\omega_N(f,t)\leq\varepsilon\big)
=1\quad\text{for all }t\geq 0.
\]
More precisely, for all $t\geq 0$ there exists $r\in(0,t]$, which only depends
on $\omega(f,\cdot)$ and $t$, such that
\begin{equation}\label{eq:quantitativebound}
\bbP\big(\omega(f,t)-\omega_N(f,t)\leq\varepsilon\big)
\geq
\bbP\bigg(\bigcup_{n=1}^NB_{r/2}(\bfx_n)=\calX\bigg)
\geq
P(r/2,N).
\end{equation}
\end{theorem}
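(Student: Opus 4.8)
The plan is to reduce the probabilistic statement to the deterministic pointwise bound of \Cref{thm:coveringtobound} combined with the covering‑probability estimate of \Cref{lem:coveringprob}. The case $t=0$ is immediate, since $\omega(f,0)=\omega_N(f,0)=0$ by \Cref{lem:ModCont}, so fix $t>0$ and $\varepsilon>0$. The deterministic ingredient states that whenever $X_N$ happens to be an $r/2$-net with $0<r\le t$, then $0\le\omega(f,t)-\omega_N(f,t)\le\omega(f,t)-\omega(f,t-r)+2\omega(f,r)$. Hence the whole argument hinges on choosing a single radius $r\in(0,t]$ that, for the given $\varepsilon$, depends only on $\omega(f,\cdot)$ and $t$ and makes this right-hand side at most $\varepsilon$.

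To select $r$, I would invoke the hypothesis that $\omega(f,\cdot)$ is continuous. Continuity at $t$ yields $\delta_1>0$ with $\omega(f,t)-\omega(f,t-r)\le\varepsilon/2$ for all $r\in(0,\delta_1]$, while continuity at $0$ together with $\omega(f,0)=0$ yields $\delta_2>0$ with $2\omega(f,r)\le\varepsilon/2$ for all $r\in(0,\delta_2]$. Setting $r\isdef\min\{t,\delta_1,\delta_2\}\in(0,t]$, the estimate of \Cref{thm:coveringtobound} forces $\omega(f,t)-\omega_N(f,t)\le\varepsilon$ on the event $\big\{\bigcup_{n=1}^N B_{r/2}(\bfx_n)=\calX\big\}$; in particular this event is contained in $\{\omega(f,t)-\omega_N(f,t)\le\varepsilon\}$.

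Consequently, $\bbP\big(\omega(f,t)-\omega_N(f,t)\le\varepsilon\big)\ge\bbP\big(\bigcup_{n=1}^N B_{r/2}(\bfx_n)=\calX\big)$, and \Cref{lem:coveringprob} applied with radius $r/2$ bounds the latter from below by $P(r/2,N)=1-\calN(\calX,r/4)\,\eta(r/4)^N$, which gives \eqref{eq:quantitativebound}. Finally, I let $N\to\infty$: compactness of $\calX$ makes $\calN(\calX,r/4)$ a finite constant, and \Cref{lem:infPnonzero} (through the definition of $\eta$ in \eqref{eq:CoveringProb}) gives $\eta(r/4)<1$, so $\calN(\calX,r/4)\,\eta(r/4)^N\to 0$ and therefore $P(r/2,N)\to 1$; convergence in probability follows, and since $t>0$ and $\varepsilon>0$ were arbitrary, so does the first displayed limit.

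I do not expect a genuine obstacle: every component is already available. The only point needing care is recognizing that it is continuity of the \emph{optimal} modulus of continuity — not merely uniform continuity of $f$ — that allows the error to be split into two separately controllable pieces, and that once $r$ has been fixed the random covering event is independent of $f$, which is exactly what makes the quantitative bound \eqref{eq:quantitativebound} clean.
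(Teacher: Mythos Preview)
Your proof is correct and follows essentially the same route as the paper: handle $t=0$ trivially, then for $t>0$ use continuity of $\omega(f,\cdot)$ at $t$ and at $0$ to pick $r\in(0,t]$ so that the deterministic bound of \Cref{thm:coveringtobound} yields $\omega(f,t)-\omega_N(f,t)\le\varepsilon$ on the covering event, and conclude via \Cref{lem:coveringprob}. The only cosmetic difference is that the paper splits $\varepsilon$ as $\varepsilon/3+2\cdot\varepsilon/3$ rather than $\varepsilon/2+\varepsilon/2$, and you spell out explicitly why $P(r/2,N)\to1$.
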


\begin{proof}
The result for $t=0$ follows trivially. Hence, let $t>0$.
Since $\omega(f,\cdot)$ is continuous there exists for every $\varepsilon>0$
an $r\in(0,t]$ such that
\[
\omega(f,t)-\omega(f,t-r)\leq\frac{\varepsilon}{3},
\qquad
\omega(f,r)\leq\frac{\varepsilon}{3}.
\]
\Cref{thm:coveringtobound} thus implies
\[
0
\leq
\omega(f,t)-\omega_N(f,t)
\leq
\omega(f,t)-\omega(f,t-r)+2\omega(f,r)
\leq
\varepsilon
\]
for all sets of data sites $X_N$ which form an $r/2$-net.
Estimate \eqref{eq:quantitativebound} follows now
with \Cref{lem:coveringprob} and implies in view of
in \Cref{eq:CoveringProb}
convergence in probability.
\end{proof}

Based on the pointwise consistency result in probability, consistency results
in $L_\bbP^s(\calX)$ and $\bbP$-almost sure consistency for fixed $t$ can be
readily deduced as follows.

\begin{corollary}[Consistency of moments for fixed $t$]%
\label{cor:pointwise_Lr}
Under the assumptions of \Cref{thm:pointwise_prob}, 
there holds for all $t\geq 0$, $s\in[1,\infty)$ that
\[
\lim_{N\to\infty}\bbE\big[|\omega(f,t)-\omega_N(f,t)|^s\big]=0.
\]
\end{corollary}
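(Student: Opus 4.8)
The plan is to upgrade the convergence in probability established in \Cref{thm:pointwise_prob} to convergence of $s$-th moments by exploiting that the approximation error is a \emph{bounded} random variable with a deterministic, integrable majorant. First I would record that $\omega(f,t)$ is finite: since $f$ is continuous on the compact space $\calX$, the image $f(\calX)$ is compact, hence bounded in $\calY$, so $\omega(f,t)\leq\diam\big(f(\calX)\big)<\infty$. By \Cref{lem:monotonicity} we moreover have the deterministic two-sided bound $0\leq\omega(f,t)-\omega_N(f,t)\leq\omega(f,t)$, so the nonnegative random variable $Z_N\isdef\omega(f,t)-\omega_N(f,t)$ takes values in the fixed interval $[0,\omega(f,t)]$ for every $N$; in particular $|\omega(f,t)-\omega_N(f,t)|^s=Z_N^s\leq\omega(f,t)^s$ is dominated by a constant.

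Next, for an arbitrary $\varepsilon>0$ I would split the expectation according to whether $Z_N$ exceeds $\varepsilon$,
\[
\bbE\big[Z_N^s\big]
=\bbE\big[Z_N^s\mathbbm{1}_{\{Z_N\leq\varepsilon\}}\big]
+\bbE\big[Z_N^s\mathbbm{1}_{\{Z_N>\varepsilon\}}\big]
\leq\varepsilon^s+\omega(f,t)^s\,\bbP\big(Z_N>\varepsilon\big),
\]
and then invoke \Cref{thm:pointwise_prob}, which gives $\bbP(Z_N>\varepsilon)=1-\bbP\big(\omega(f,t)-\omega_N(f,t)\leq\varepsilon\big)\to0$ as $N\to\infty$ (quantitatively, bounded above by $1-P(r/2,N)$ for a suitable $r\in(0,t]$ depending only on $\omega(f,\cdot)$ and $t$). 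Hence $\limsup_{N\to\infty}\bbE[Z_N^s]\leq\varepsilon^s$, and letting $\varepsilon\downarrow0$ yields $\lim_{N\to\infty}\bbE[Z_N^s]=0$, which is the claim. Equivalently, one could appeal to the bounded convergence theorem directly, since $Z_N^s\to0$ in probability while $Z_N^s\leq\omega(f,t)^s$ provides a constant integrable majorant.

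I do not expect a serious obstacle here; the argument is the standard ``convergence in probability plus uniform boundedness implies $L^s$-convergence'' passage. The only points that require a moment's care are the finiteness of $\omega(f,t)$ (needed so that $\omega(f,t)^s$ is an admissible majorant) and the trivial special case $t=0$, where $\omega(f,0)=\omega_N(f,0)=0$ by \Cref{lem:ModCont} so there is nothing to prove. If desired, the same estimate immediately phrases the conclusion as convergence in $L^s_\bbP(\calX)$, since all quantities involved are finite.
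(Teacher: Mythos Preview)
Your proof is correct and takes essentially the same route as the paper: both exploit the deterministic bound $0\leq\omega(f,t)-\omega_N(f,t)\leq\omega(f,t)<\infty$ from \Cref{lem:monotonicity} to upgrade the convergence in probability of \Cref{thm:pointwise_prob} to $L^s$-convergence. The paper phrases this as uniform integrability of $\omega_N(f,t)$ and appeals to a standard result, while you write out the splitting argument explicitly (equivalently, bounded convergence); these are the same idea.
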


\begin{proof}
It is sufficient to show that $\omega_N(f,t)$ is uniformly integrable over
$\calX$, see, for example, \cite[Exercise 6.15]{Folland}. This is trivially
fulfilled, since \Cref{lem:monotonicity} implies that
\[
\bbE[|\omega_N(f,t)|^s]
\leq
\bbE[|\omega(f,t)|^s]
=|\omega(f,t)|^s
\]
for all $s\in[1,\infty)$.
\end{proof}

\begin{corollary}[$\bbP$-almost sure consistency for fixed $t$]%
\label{cor:pointwise_as}
Under the assumptions of \Cref{thm:pointwise_prob}, and for all $t\geq 0$,
$\omega_N(f,t)$ converges to $\omega(f,t)$ $\bbP$-almost surely, that is,
\[
\bbP\Big(\lim_{N\to\infty}\omega_N(f,t)=\omega(f,t)\Big)=1.
\]
\end{corollary}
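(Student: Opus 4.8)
The plan is to upgrade the convergence in probability established in \Cref{thm:pointwise_prob} to almost sure convergence by exploiting the explicit exponential-in-$N$ tail bound encoded in $P(r/2,N)$ together with the first Borel--Cantelli lemma, sweeping $\varepsilon$ through the countable sequence $\varepsilon = 1/k$. The case $t=0$ is immediate since $\omega_N(f,0)=\omega(f,0)=0$ deterministically, so fix $t>0$ throughout.

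First I would fix $k\in\bbN$ and apply \Cref{thm:pointwise_prob} with $\varepsilon = 1/k$: this produces $r_k\in(0,t]$, depending only on $\omega(f,\cdot)$, $t$ and $k$, such that
\[
\bbP\big(\omega(f,t)-\omega_N(f,t) > 1/k\big)
\leq
1-P(r_k/2,N)
=
\calN(\calX,r_k/4)\,\eta(r_k/4)^N .
\]
Since $\calX$ is compact we have $\calN(\calX,r_k/4)<\infty$, and \Cref{lem:infPnonzero} gives $\eta(r_k/4)<1$; hence the right-hand side is summable over $N$. The first Borel--Cantelli lemma (which requires no independence of the events, only summability) then yields that, on an event of full probability, $\omega(f,t)-\omega_N(f,t)\leq 1/k$ for all sufficiently large $N$.

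Next I would intersect these full-probability events over $k\in\bbN$; the intersection again has probability one. On it, $\limsup_{N\to\infty}\big(\omega(f,t)-\omega_N(f,t)\big)\leq 1/k$ for every $k$, hence equals $0$; combined with the one-sided bound $\omega(f,t)-\omega_N(f,t)\geq 0$ from \Cref{lem:monotonicity}, this gives $\lim_{N\to\infty}\omega_N(f,t)=\omega(f,t)$ almost surely, which is the assertion. Alternatively, on the natural infinite product model where $X_N\subset X_{N+1}$, \Cref{lem:monotonicity} makes $N\mapsto\omega_N(f,t)$ non-decreasing and bounded above by $\omega(f,t)$, hence convergent everywhere to some $L(t)\leq\omega(f,t)$; since it also converges to $\omega(f,t)$ in probability by \Cref{thm:pointwise_prob}, uniqueness of limits in probability forces $L(t)=\omega(f,t)$ almost surely.

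There is no genuine obstacle here: all the substance is already contained in the quantitative covering estimate of \Cref{lem:coveringprob} feeding into \Cref{thm:pointwise_prob}. The only point requiring a little care is that Borel--Cantelli applied to a single $\varepsilon$ only certifies ``eventually within $\varepsilon$''; obtaining the exact limit needs the countable intersection over $\varepsilon = 1/k$ together with the sign constraint $\omega_N(f,t)\leq\omega(f,t)$.
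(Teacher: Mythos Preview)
Your proposal is correct. Your alternative argument at the end --- monotonicity gives a deterministic limit $L(t)\leq\omega(f,t)$, and convergence in probability identifies $L(t)=\omega(f,t)$ almost surely --- is essentially the paper's own proof, which phrases it as ``convergence in probability yields an almost surely convergent subsequence, and monotonicity upgrades this to the full sequence''. Your main Borel--Cantelli route, by contrast, is a genuinely different argument: it exploits the explicit exponential tail $\calN(\calX,r_k/4)\eta(r_k/4)^N$ from \Cref{lem:coveringprob} rather than just the qualitative convergence in probability. This is slightly heavier machinery but has the advantage of being more robust --- it would still work without the monotonicity of $\omega_N$ in $N$ (you only use the one-sided bound $\omega_N\leq\omega$ at the very end, and even that could be avoided by running Borel--Cantelli on two-sided events). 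The paper's approach is shorter precisely because it leans on monotonicity to bypass any quantitative input.
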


\begin{proof}
The consistency in probability, as shown in \Cref{thm:pointwise_prob}, 
implies that there exists a subsequence $\omega_{N_k}(f,t)$ which converges 
almost surely to $\omega(f,t)$, that is,
\[
\bbP\Big(\lim_{k\to\infty}\omega_{N_k}(f,t)=\omega(f,t)\Big)=1,
\]
see, for example, \cite[Theorem 2.30]{Folland}.
The monotonicity of $\omega_N(f,t)$ in $N$, see \Cref{lem:monotonicity},
now implies the assertion.
\end{proof}

\subsection{Consistency on intervals}
In analogy to the deterministic case, the following
results inspect the consistency in \(L^p(a,b)\).
\begin{theorem}[Consistency in probability in $L^p(a,b)$]\label{thm:Lr_prob}
Let $[a,b]\subset[0,T_\calX]$. Let $f\colon\calX\to\calY$ be continuous for
$p\in[1,\infty)$ and let $f$ have continuous modulus of continuity for
$p=\infty$. Then, for all $p\in[1,\infty]$, we have
$\|\omega_N(f,\cdot)-\omega(f,\cdot)\|_{L^p(a,b)}\to 0$ in probability, 
that is, for all $\varepsilon>0$, there holds
\[
\lim_{N\to\infty}\bbP\big(\|\omega(f,\cdot)-\omega_N(f,\cdot)\|_{L^p(a,b)}
\leq\varepsilon\big)=1.
\]
More precisely, for each interval $[a,b]$, there exists $r\in(0,b]$, which only
depends on $\omega(f,\cdot)$ and $[a,b]$, such that
\[
\bbP\big(\|\omega(f,\cdot)-\omega_N(f,\cdot)\|_{L^p(a,b)}\leq\varepsilon\big)
=
\bbP\bigg(\bigcup_{n=1}^NB_{r/2}(\bfx_n)=\calX\bigg)
\geq
P(r/2,N).
\]
\end{theorem}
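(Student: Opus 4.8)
The strategy mirrors the deterministic-to-probabilistic lifting already used for \Cref{thm:pointwise_prob}, except that the deterministic ingredient is now \Cref{thm:coveringtoboundLp} (together with \Cref{cor:detLpconv} when $a=0$) rather than \Cref{thm:coveringtobound}. First I would dispose of the two cases separately. For $p=\infty$ the statement is exactly \Cref{cor:continuousconsistency} combined with \Cref{lem:coveringprob}: continuity of $\omega(f,\cdot)$ plus uniform continuity of $f$ on the compact $\calX$ give a uniform-in-$t$ deterministic bound whenever $X_N$ is an $r/2$-net, so one only needs to multiply through by the covering probability $P(r/2,N)$.

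For $p\in[1,\infty)$, the plan is as follows. Fix $\varepsilon>0$. I first choose the covering radius $r\in(0,b]$ purely from $\omega(f,\cdot)$ and $[a,b]$, so that the right-hand side of the deterministic estimate is at most $\varepsilon$. Concretely, if $a>0$ I invoke \Cref{thm:coveringtoboundLp} with any $0<r\le a$; by continuity of the shift on $L^p$ (so that $\|\omega(f,\cdot)-\omega(f,\cdot-r)\|_{L^p(a,b)}\to 0$ as $r\to 0$) and by $\omega(f,r)\to 0$ (\Cref{lem:ModCont}), $r$ can be taken small enough that
\[
2^{\frac{p-1}{p}}\|\omega(f,\cdot)-\omega(f,\cdot-r)\|_{L^p(a,b)}+2(b-a)^{1/p}\omega(f,r)\le\varepsilon.
\]
If $a=0$ I instead apply \Cref{cor:detLpconv} with $\alpha=r$, again choosing $r$ small enough that the three summands $\alpha^{1/p}\omega(f,\alpha)$, $2^{(p-1)/p}\|\omega(f,\cdot)-\omega(f,\cdot-\alpha)\|_{L^p(\alpha,b)}$, and $2(b-\alpha)^{1/p}\omega(f,\alpha)$ together do not exceed $\varepsilon$; here continuity of $\omega(f,\cdot)$ at $0$ handles the first term and continuity of the shift handles the second. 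In either case $r$ depends only on $\omega(f,\cdot)$ and $[a,b]$, as claimed.

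The probabilistic part is then immediate. On the event $\{\bigcup_{n=1}^N B_{r/2}(\bfx_n)=\calX\}$, the empirical data sites $X_N$ form an $r/2$-net, so the deterministic estimate applies verbatim and gives $\|\omega(f,\cdot)-\omega_N(f,\cdot)\|_{L^p(a,b)}\le\varepsilon$. Hence that event is contained in $\{\|\omega(f,\cdot)-\omega_N(f,\cdot)\|_{L^p(a,b)}\le\varepsilon\}$, which yields the stated chain of inequalities once I bound the covering event below by $P(r/2,N)$ via \Cref{lem:coveringprob}. Finally, from the explicit form $P(r/2,N)=1-\calN(\calX,r/4)\eta(r/4)^N$ with $\eta(r/4)<1$ (\Cref{lem:infPnonzero}), letting $N\to\infty$ forces $P(r/2,N)\to 1$, which gives convergence in probability. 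I do not expect a genuine obstacle here; the only point requiring a little care is the bookkeeping in the $a=0$ case, namely confirming that the extra near-origin term produced by \Cref{cor:detLpconv} is controlled by the \emph{same} choice of $r$ that controls the shift term, which is exactly where continuity of $\omega(f,\cdot)$ at the origin is used.
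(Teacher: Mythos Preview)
Your proposal is correct and follows essentially the same route as the paper: reduce to the deterministic $L^p$ bound (\Cref{thm:coveringtoboundLp}, respectively \Cref{cor:detLpconv} near the origin), choose the covering radius $r$ from continuity of the shift in $L^p$ and of $\omega(f,\cdot)$ at $0$, and then invoke \Cref{lem:coveringprob} for the covering event. The only cosmetic difference is that the paper reduces to $a=0$ by a ``without loss of generality'' and then introduces two parameters (first $\alpha$ to absorb the near-origin mass, then $r\le\alpha$ for the shift), whereas you treat $a>0$ and $a=0$ separately and collapse $\alpha=r$; both organizations yield the same estimate.
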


\begin{proof}
Without loss of generality we can assume that $a=0$. In this case,
we have for all $\alpha\in[0,b]$ that 
\[
\|\omega(f,\cdot)-\omega_N(f,\cdot)\|_{L^p(0,b)}
\leq
\|\omega(f,\cdot)-\omega_N(f,\cdot)\|_{L^p(0,\alpha)}
+
\|\omega(f,\cdot)-\omega_N(f,\cdot)\|_{L^p(\alpha,b)}.
\]
In analogy to the proof of \Cref{cor:detLpconv} the first term on the right-hand
side can be estimated by
\[
\|\omega(f,\cdot)-\omega_N(f,\cdot)\|_{L^p(0,\alpha)}
\leq
\|\omega(f,\cdot)\|_{L^p(0,\alpha)}
\leq
\begin{cases}
	\alpha^{1/p}\omega(f,\alpha),&p\in[1,\infty),\\
	\omega(f,\alpha),&p=\infty.
\end{cases}
\]
Now, for fixed $\varepsilon>0$, we choose $0<\alpha\leq b$ such that
$\alpha^{1/p}\omega(f,\alpha)<\varepsilon/2$ for $p\in[1,\infty)$
or $\omega(f,\alpha)<\varepsilon/2$ for $p=\infty$. Then, we obtain
the bound
\[
\|\omega(f,\cdot)-\omega_N(f,\cdot)\|_{L^p(0,b)}
\leq
\frac{\varepsilon}{2}+\|\omega(f,\cdot)-\omega_N(f,\cdot)\|_{L^p(\alpha,b)},
\]
which holds \(\Pbb\)-almost surely. It remains to show that
\[
\bbP\bigg(\|\omega(f,\cdot)-\omega_N(f,\cdot)\|_{L^p(\alpha,b)}
\leq\frac{\varepsilon}{2}\bigg)
\to
1,
\qquad
N\to\infty.
\]
For $p\in[1,\infty)$ continuity of the shift operator and \Cref{lem:ModCont}
imply that there exists $r\in(0,\alpha]$ such that
\[
2^{(p-1)/p}\|\omega(f,\cdot)-\omega(f,\cdot-r)\|_{L^p(\alpha,b)}
+2(b-\alpha)^{1/p}\omega(f,r)
\leq
\frac{\varepsilon}{2}.
\]
Further, since we consider the limit $N\to\infty$ and since
$\calX$ is compact, we can assume that $N\geq\calN(\calX,r/2)$. Then,
\Cref{thm:coveringtoboundLp} implies that
\begin{align*}
&\bbP\bigg(\|\omega(f,\cdot)-\omega_N(f,\cdot)\|_{L^p(\alpha,b)}
\leq\frac{\varepsilon}{2}\bigg)\\
&\qquad\geq
\bbP\bigg(2^{(p-1)/p}\|\omega(f,\cdot)-\omega(f,\cdot-r)\|_{L^p(\alpha,b)}
+2(b-\alpha)^{1/p}\omega(f,r)\leq\frac{\varepsilon}{2}\bigg)\\
&\qquad\geq
\bbP\bigg(\bigcup_{n=1}^NB_{r/2}(\bfx_n)=\calX\bigg)\\
&\qquad\geq
P(r/2,N)\\
&\qquad\to
1
\end{align*}
as $N\to\infty$. The assertion for $p=\infty$ follows in complete analogy,
using the corresponding bound from \Cref{thm:coveringtoboundLp} for this case.
\end{proof}

Consistency results in $L_\bbP^s(\calX)$ and $\bbP$-almost sure consistency on
intervals are an immediate consequence of the previous theorem.

\begin{corollary}[Consistency of moments in $L^p(a,b)$]%
\label{cor:Lr_Lr}
Under the assumptions of \Cref{thm:Lr_prob}, there holds 
$\|\omega(f,\cdot)-\omega_N(f,\cdot)\|_{L^p(a,b)}\to 0$
in $L_\bbP^s(\calX)$
for any $p\in[1,\infty]$ 
and any $s\in[1,\infty)$, that is, 
\[
\lim_{N\to\infty}
\Big(\bbE\big[\|\omega(f,\cdot)-\omega_N(f,\cdot)\|_{L^p(a,b)}^s\big]\Big)^{1/s}
=0.
\]
\end{corollary}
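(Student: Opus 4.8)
The plan is to deduce the $L^s_\bbP$-convergence from the in-probability convergence of \Cref{thm:Lr_prob} together with a uniform integrability argument, exactly as in the proof of \Cref{cor:pointwise_Lr}. First I would fix $p\in[1,\infty]$ and $s\in[1,\infty)$ and abbreviate $G_N\isdef\|\omega(f,\cdot)-\omega_N(f,\cdot)\|_{L^p(a,b)}$, viewed as a nonnegative random variable on $(\calX^{\bbN},\bbP^{\otimes\bbN})$ (the probability space of the i.i.d.\ sample). By \Cref{thm:Lr_prob} we already know $G_N\to 0$ in probability, so it suffices to show that the family $\{G_N^s\}_{N\in\bbN}$ is uniformly integrable; then convergence in probability upgrades to convergence in $L^1_\bbP$ of $G_N^s$, i.e.\ $\bbE[G_N^s]\to 0$, which is the claim.

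For uniform integrability I would exhibit an integrable (in fact deterministic, hence constant) dominating bound. Using \Cref{lem:monotonicity}, $0\le\omega_N(f,t)\le\omega(f,t)$ pointwise in $t$, so by the triangle inequality in $L^p(a,b)$ and nonnegativity,
\[
G_N
=
\|\omega(f,\cdot)-\omega_N(f,\cdot)\|_{L^p(a,b)}
\le
\|\omega(f,\cdot)\|_{L^p(a,b)}
+
\|\omega_N(f,\cdot)\|_{L^p(a,b)}
\le
2\|\omega(f,\cdot)\|_{L^p(a,b)}
\defis
C,
\]
where $C<\infty$ because $\omega(f,\cdot)$ is bounded on the compact interval $[a,b]\subset[0,T_\calX]$ (indeed $\omega(f,t)\le\omega(f,T_\calX)<\infty$ by \Cref{lem:ModCont} and continuity of $f$ on the compact space $\calX$; for $p=\infty$ the same bound holds trivially). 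Hence $G_N^s\le C^s$ almost surely for all $N$, so the family is dominated by the constant $C^s$, which is $\bbP$-integrable. This gives uniform integrability immediately.

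Finally I would invoke the standard fact that convergence in probability plus uniform integrability implies $L^1_\bbP$-convergence (see, e.g., \cite[Exercise 6.15]{Folland} as already cited in \Cref{cor:pointwise_Lr}): since $G_N^s\to 0$ in probability and $\{G_N^s\}$ is uniformly integrable, we conclude $\bbE[G_N^s]\to 0$, and taking $s$-th roots yields $\big(\bbE[G_N^s]\big)^{1/s}\to 0$, as asserted. There is essentially no obstacle here; the only point requiring a word of care is that the relevant probability space is the product space of the sample, so $\omega_N(f,\cdot)$ — and hence $G_N$ — is measurable in the sample, which is already implicit in the statement of \Cref{thm:Lr_prob}. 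The argument is otherwise a verbatim analogue of \Cref{cor:pointwise_Lr} with the pointwise value $\omega_N(f,t)$ replaced by the $L^p(a,b)$-norm functional.
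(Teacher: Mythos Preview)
Your proposal is correct and follows exactly the route the paper takes: the paper's proof simply reads ``The proof is in analogy to \Cref{cor:pointwise_Lr},'' and your argument spells out precisely that analogy --- convergence in probability from \Cref{thm:Lr_prob} plus a deterministic (hence uniformly integrable) bound coming from \Cref{lem:monotonicity}. A small cosmetic remark: since $0\le\omega_N(f,t)\le\omega(f,t)$, you in fact have $G_N\le\|\omega(f,\cdot)\|_{L^p(a,b)}$ directly, so the factor $2$ from the triangle inequality is unnecessary.
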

\begin{proof}
The proof is in analogy to \Cref{cor:pointwise_Lr}.
\end{proof}

\begin{corollary}[$\bbP$-almost sure consistency in $L^p(a,b)$]%
\label{cor:Lr_as}
Under the assumptions of \Cref{thm:Lr_prob}, and with $p\in[1,\infty]$, 
the sequence $\|\omega(f,\cdot)-\omega_N(f,\cdot)\|_{L^p(a,b)}\to 0$
converges almost surely, that is,
\[
\bbP\Big(\lim_{N\to\infty}
\|\omega(f,\cdot)-\omega_N(f,\cdot)\|_{L^p(a,b)}=0\Big)=1.
\]
\end{corollary}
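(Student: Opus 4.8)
The plan is to upgrade the convergence in probability supplied by \Cref{thm:Lr_prob} to almost sure convergence by exploiting that, once the data sites are generated appropriately, the sequence of $L^p$-errors is \emph{monotone}. Concretely, I would realize the i.i.d.\ data sites as the coordinates of a single sequence drawn from the product measure on $\calX^{\bbN}$, so that $X_N\subset X_{N+1}$ for every $N$ and \Cref{lem:monotonicity} applies pathwise: $0\leq\omega(f,t)-\omega_{N+1}(f,t)\leq\omega(f,t)-\omega_N(f,t)$ for every $t\geq 0$. Since $x\mapsto x^p$ is non-decreasing on $[0,\infty)$ for $p\in[1,\infty)$, and the $L^\infty$-norm is monotone under pointwise domination of non-negative functions, the random variables
\[
a_N\isdef\|\omega(f,\cdot)-\omega_N(f,\cdot)\|_{L^p(a,b)}
\]
form a non-increasing sequence bounded below by $0$. (They are finite and measurable because $\omega_N(f,\cdot)\leq\omega(f,\cdot)\leq\diam\big(f(\calX)\big)<\infty$ on $[0,T_\calX]\supset[a,b]$, and the $d_{i,j}$ and $d_\calY\big(f(\bfx_i),f(\bfx_j)\big)$ depend continuously on the data sites.)

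Next I would observe that, being non-increasing and non-negative, $\{a_N\}_{N\in\bbN}$ converges pathwise, hence $\bbP$-almost surely, to $a_\infty\isdef\inf_N a_N\geq 0$; in particular $a_N\to a_\infty$ in probability. On the other hand, \Cref{thm:Lr_prob} gives $a_N\to 0$ in probability. Since the limit in probability is $\bbP$-almost surely unique, this forces $a_\infty=0$ $\bbP$-almost surely, i.e.\ $\bbP\big(\lim_{N\to\infty}a_N=0\big)=1$, which is the assertion. Equivalently, and in the spirit of the proof of \Cref{cor:pointwise_as}, one can extract from \Cref{thm:Lr_prob} a subsequence $a_{N_k}\to 0$ $\bbP$-almost surely, see e.g.\ \cite[Theorem~2.30]{Folland}, and then use that a non-increasing non-negative sequence admitting a null subsequence is itself a null sequence.

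The only point that needs care — rather than a true obstacle — is the reduction to a monotone sequence: one must fix at the outset a single probability space carrying all the $X_N$ simultaneously and nested, so that \Cref{lem:monotonicity} is available pathwise and the almost sure statement is interpreted with respect to this space. This is legitimate since convergence in probability only depends on the marginal laws of the $a_N$, which are unaffected by the coupling. Everything else is the routine interplay between monotone convergence and convergence in probability, and the transition between $p\in[1,\infty)$ and $p=\infty$ needs no modification because the argument uses nothing about the norm beyond its monotonicity under pointwise domination of non-negative functions.
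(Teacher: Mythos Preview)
Your proposal is correct and follows essentially the same route as the paper, which simply refers back to the proof of \Cref{cor:pointwise_as}: extract from the convergence in probability of \Cref{thm:Lr_prob} an almost surely convergent subsequence, and then use the monotonicity in $N$ provided by \Cref{lem:monotonicity} (applied pointwise in $t$, hence to the $L^p$-error) to pass to the full sequence. Your explicit discussion of the coupling on the product space $\calX^{\bbN}$ to ensure $X_N\subset X_{N+1}$ is a detail the paper leaves implicit, so if anything your argument is slightly more careful.
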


\begin{proof}
The assertion follows similarly to the proof of \Cref{cor:pointwise_as}.
\end{proof}

\subsection{Consistency of the empirical $|\cdot|_{\rho,N}$-seminorm}
The consistency of the $|\cdot|_{\rho,N}$-seminorm for empirical data sites
follows directly from the deterministic case.
\begin{theorem}[Consistency of the empirical $|\cdot|_{\rho,N}$-seminorm]%
\label{thm:empiricalseminormconsistency}
Let $\rho(\cdot)^{-1}\omega(f,\cdot)\colon[0,\infty]\to[0,\infty)$ be continuous
and $|f|_\rho<\infty$. Then, there holds
\[
\lim_{N\to\infty}|f_N|_{\rho,N}
=
\lim_{N\to\infty}|f|_{\rho,N}
=
|f|_\rho
\]
$\bbP$-almost surely.
\end{theorem}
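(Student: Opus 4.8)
The statement asserts $\bbP$-a.s. convergence of both empirical seminorms $|f_N|_{\rho,N}$ and $|f|_{\rho,N}$ to $|f|_\rho$, under the assumption that $\rho^{-1}\omega(f,\cdot)$ is continuous and $|f|_\rho<\infty$. The key observation is that the deterministic result \Cref{thm:seminormconv} already does all the analytic work: it shows that if $\{X_{N_k}\}$ is a \emph{nested} sequence of $r_k/2$-nets with $r_k\to 0$, then $|f|_{\rho,N_k}\to|f|_\rho$. So the plan is to reduce the probabilistic statement to the deterministic one by exhibiting, $\bbP$-almost surely, such a sequence of nets inside the sequence of empirical point sets.

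First I would observe that, since the empirical data sites are drawn so that $X_N\subset X_{N+1}$ (each new draw is appended), the sequence $\{X_N\}_{N\in\bbN}$ is automatically nested, and hence $|f|_{\rho,N}$ is non-decreasing in $N$ and bounded above by $|f|_\rho$ by the monotonicity lemma for the discrete seminorm. Thus $\lim_{N\to\infty}|f|_{\rho,N}$ exists ($\bbP$-almost surely — in fact surely) and is $\le|f|_\rho$. It remains to show the limit equals $|f|_\rho$ with probability one. For this I would fix a null-probability exceptional set as follows: pick any sequence $r_k\downarrow 0$ (e.g.\ $r_k=1/k$). By \Cref{lem:coveringprob} we have $\bbP\big(\bigcup_{n=1}^N B_{r_k/2}(\bfx_n)=\calX\big)\ge P(r_k/2,N)=1-\calN(\calX,r_k/4)\eta(r_k/4)^N$, and since $\eta(r_k/4)<1$ by \Cref{lem:infPnonzero}, this probability tends to $1$ as $N\to\infty$ for each fixed $k$. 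Passing to a subsequence $N_k$ large enough that $\calN(\calX,r_k/4)\eta(r_k/4)^{N_k}\le 2^{-k}$, Borel–Cantelli guarantees that $\bbP$-almost surely, for all sufficiently large $k$, the set $X_{N_k}$ is an $r_k/2$-net for $\calX$. On that full-probability event, $\{X_{N_k}\}$ (or its tail) is a nested sequence of $r_k/2$-nets with $r_k\to 0$, so \Cref{thm:seminormconv} applies and gives $|f|_{\rho,N_k}\to|f|_\rho$ along this subsequence; combined with monotonicity in $N$ this forces $\lim_{N\to\infty}|f|_{\rho,N}=|f|_\rho$ $\bbP$-almost surely.

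Finally, I would identify $\lim_N|f_N|_{\rho,N}$ with $\lim_N|f|_{\rho,N}$. Since $f_N=f|_{X_N}$, we have $\omega_N(Y_N,t)=\omega_N(f,t)$ for every $t$, and likewise the distances $d_{i,j}$ are the same object in both definitions \eqref{eq:discLipNorm} and \eqref{eq:dataLipNorm}; hence $|f_N|_{\rho,N}=|f|_{\rho,N}$ identically, not merely in the limit. This collapses the two claimed limits into one and completes the proof.

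**Main obstacle.** There is no serious analytic obstacle — the heavy lifting is in \Cref{thm:seminormconv} and the covering-probability lemmas. The one point requiring care is the logical structure of the almost-sure statement: one must extract a \emph{single} full-probability event on which a \emph{nested} sequence of shrinking nets exists, rather than only having, for each $k$ separately, an event of probability tending to $1$. This is exactly what the Borel–Cantelli step delivers, using that the empirical sequence is nested by construction so that ``$X_{N_k}$ is an $r_k/2$-net eventually'' is enough; one should also note that $\rho^{-1}\omega(f,\cdot)$ continuous on $[0,\infty]$ forces $\omega(f,\cdot)$ continuous (wherever $\rho$ is finite and nonzero), which is the regularity hypothesis \Cref{thm:seminormconv} needs.
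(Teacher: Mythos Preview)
Your approach is essentially the same as the paper's: both reduce to the deterministic \Cref{thm:seminormconv} by showing that, $\bbP$-almost surely, the nested empirical point sets eventually form $r_k/2$-nets with $r_k\to 0$. The paper phrases this as $h_{X_N}\to 0$ almost surely, whereas you make the Borel--Cantelli step explicit; your execution is arguably tidier, and your remark that $|f_N|_{\rho,N}=|f|_{\rho,N}$ identically (so the two limits coincide trivially) is correct.
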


\begin{proof}
We first show by contradiction that the fill distance $h_{X_N}$, see
\Cref{eq:FillDistance}, of $N$ random, independently drawn points on $\calX$
converges to zero $\bbP$-almost surely for $N\to\infty$. Assume that
$h_{X_N}\geq h_0>0$ for all $N\in\bbN$. Then, there exists $\bfx\in\calX$ such
that $\emptyset=B_{h_0}(\bfx)\cap\{\bfx_i\}_{i=1}^N$ for all $N\in\bbN$. By the
same argumentation as in the proof of \Cref{lem:coveringprob}, the probability
of this being the case is bounded by
\[
\bbP\Big(\{\bfx_i\}_{i=1}^N\cap B_{h_0}(\bfx)=\emptyset\Big)
\leq
\eta(h_0)^N
\]
with $\eta(h_0)<1$ defined as in \Cref{eq:CoveringProb}. For $N\to\infty$,
this probability goes to zero, meaning that with probability 1 there exists
$i\in\bbN$ such that $\bfx_i\in B_{h_0}(\bfx)$. This is a contradiction to
$h_0$ being a lower bound to the fill distance for all $N\in\bbN$.

Hence, we infer that the family of balls $\{B_{h_{X_N}}(\bfx_i)\}_{i=1}^N$
generates a sequence of $h_{X_N}$-nets on $\calX$ and the assumptions of
\Cref{thm:seminormconv} are satisfied $\bbP$-almost surely with $r_k/2=h_{X_N}$.
This implies the assertion.
\end{proof}

\section{Computation of the discrete modulus of continuity}%
\label{sec:Computation}
The computation of the discrete modulus of continuity from
\Cref{eq:discmodcont}, that is,
\[
\omega_N(f,t)=\max_{\gfrac{\bfx_i,\bfx_j\in X_N:}{d_{i,j}\leq t}}
d_\calY\big(f(\bfx_i),f(\bfx_j)\big),
\]
requires the evaluation of the distances $d_\calY\big(f(\bfx_i),f(\bfx_j)\big)$
for all $i,j=1,\ldots,N$. Therefore, the computational cost is $\calO(N^2)$.
The same is true if we aim at computing the discrete seminorm $|f|_{\rho,N}$,
see \Cref{eq:discLipNorm,eq:dataLipNorm}, from data.
Fortunately, for sufficiently small values of $t$, the computational cost
can be reduced if the metric space \(\Xcal\) exhibits sufficient
structure and if we restrict the evaluation of
$d_\calY\big(f(\bfx_i),f(\bfx_j)\big)$ to pairs of data sites $(\bfx_i,\bfx_j)$
with distance smaller than a fixed $t$. If such structure is present, the latter
can efficiently be realized using an $\varepsilon$-nearest neighbor search.

Based on this consideration, we provide a heuristic algorithm for the efficient
evaluation of the modulus of continuity. This heuristic is based on two further
observations. The first observation is that the modulus of continuity
is monotonically increasing, see \Cref{lem:ModCont}. The second observation is
that, for our applications, we require an accurate approximation of the
modulus of continuity only close to the zero, where its magnitude is typically
small. Less precise approximations are acceptable for increasing distance from
zero. In the following, we first recall the efficient implementation of the
$\varepsilon$-nearest neighbor search as well as a coarsening
strategy for set covers. Afterwards, we discuss the details of our algorithm for
the efficient evaluation of the modulus of continuity.

\subsection{$\varepsilon$-nearest neighbor search}\label{sec:epsnearest}
For an efficient implementation of the search for nearest neighbors, further
assumptions on the structure of the metric space \((\Xcal,d_{\Xcal})\) are
required, compare \Cref{ex:Assouad}.
\begin{definition}
Let $(\calX,d_\calX)$ be a metric space.
\begin{enumerate}
\item The space $(\calX,d_\calX)$ is \emph{Ahlfors $n$-regular}, if there exists
a Borel regular measure $\mu$ and constants $C\geq 1$, $n>0$ such that
\[
C^{-1}R^n\leq\mu\big(B_R(\bfx)\big)\leq CR^n
\]
for all $\bfx\in\calX$, $R>0$.
\item The space $(\calX,d_\calX)$ is \emph{\((C,s)\)-homogeneous}, if 
\[
\Ncal\big(B_R({\bs x}),r\big)\leq C\bigg(\frac R r\bigg)^s
\]
for all \({\bs x}\in\Xcal\) and all \(0<r<R<\infty\). 
The infimum of the set of all \(s\) such that 
\((\Xcal,d_\Xcal)\) is \((C,s)\)-homogeneous is 
the \emph{Assouad dimension} of \((\Xcal,d_\Xcal)\).
\item The space $(\calX,d_\calX)$ has bounded \emph{doubling dimension}
if there exists \(0\leq s<\infty\) such that
\[
\Ncal\big(B_r({\bs x}),r/2\big)\leq 2^s
\]
for all \({\bs x}\in\Xcal\) and any \(r>0\). We refer to the infimum over
all \(s\) such that the bound holds as the doubling dimension of
$(\calX,d_\calX)$.
\end{enumerate}
\end{definition}
A bounded Assouad dimension is equivalent to 
a bounded doubling dimension, see, for example, \cite{Hei01}. 
For the particular case of an Ahlfors $n$-regular space, however,
the Assouad dimension and the doubling dimension are both equal to $n$.

For the purpose of deriving cost bounds of the $\varepsilon$-nearest neighbor
search, it is assumed in literature that $(\calX,d_\calX)$ has bounded doubling
dimension \(s\).
The cost for the \(\varepsilon\)-nearest neighbor search of a given point 
\({\bs x}\in X_N\) is then bounded by
\(2^{cs}(\log\Delta+|B_{C\varepsilon}({\bs x})\cap X_N|)\)
for two constants \(c,C>0\), where \(\Delta\isdef \diam(X_N)/q_{X_N}\)
is the \emph{aspect ratio} of \(X_N\), see \cite{KL2004, BKL2006},
and $q_{X_N}$ is the separation distance, see \Cref{eq:SepDistance}.

\begin{remark}\label{rem:nncomplexity}
Let $X_N\subset\calX$ a quasi-uniform set of data sites,
see \Cref{eq:quasi-unif}, in an Ahlfors $n$-regular metric space $\calX$.
Especially, the doubling dimension equals $n$ in this case.
Using a volume argument it is easy to show that
\(c^{-1} N^{-1/n}\leq q_{X_N}\leq c N^{-1/n}\) for some \(c>0\). This implies 
\[
\Delta=\calO(N^{1/n}),
\qquad
\big|B_{\varepsilon}(\bfx)\cap X_N\big|
=\calO\big((\varepsilon/q_{X_N})^{n}\big)=\calO(\varepsilon^nN).
\]
Therefore, the cost of finding the \(\varepsilon\)-nearest neighbors for each
\({\bs x}\in X_N\) is $\calO\big(N(\log N+\varepsilon^nN)\big)$ with the
constant depending on the quasi-uniformity constant and the
Ahlfors regularity, measure, and constant. For the choice
$\varepsilon=\Ocal\big(N^{-1/n}\log^{1/n}(N)\big)$ the cost becomes
$\calO(N\log N)$.
\end{remark}

The data structure for the efficient \(\varepsilon\)-nearest neighbor search can
be organized as a \emph{cluster tree}, see, for example, \cite{Hac2015}.

\begin{definition}
A \emph{cluster tree} $\calT$ for \(X_N\)
is a tree whose vertices correspond to non-empty subsets of $X_N$ and are
referred to as \emph{clusters}. We require that the root of $\calT$ corresponds
to $X_N$ and that it holds $\dot{\cup}_{\tau'\in\children(\tau)}\tau' =\tau$ 
for all non-leaf clusters, that is, \(\children(\tau)\neq\emptyset\). Furthermore,
we introduce the set of leaf clusters 
\(\Lcal\isdef\{\tau\in\Tcal:\children(\tau)=\emptyset\}\). The 
\emph{level} \(\ell(\tau)\) of \(\tau\in\Tcal\) is its distance from the root.
\end{definition}

While cluster trees can be constructed on any metric space with finite doubling
dimension, algorithms and complexity estimates can become quite technical. For
simplicity we restrict ourselves in the following to quasi-uniform data sets in
Ahlfors $n$-regular spaces. In this setting, we can simply consider
\emph{uniform} \emph{$n$-d-trees}.

\begin{definition}\label{def:uniformTree}
	Let $\Tcal$ be a cluster tree for $X_N\subset\Xcal$.
We call $\Tcal$ an \emph{\(n\)-d-tree}, 
		if each non-leaf cluster has exactly \(2^{\lceil n\rceil}\) children, that is,
		\(|\children(\tau)|=2^{\lceil n\rceil}\ \text{
		for all }\tau\in\Tcal\setminus\Lcal.
		\)
It is a \emph{balanced $n$-d-tree}, if it is an $n$-d-tree and 
and there is some $\cbin>0$ such that it holds
\[
\cbin^{-1}2^{J-{\lceil n\rceil}\ell(\tau)}
\leq
|\tau|
\leq
\cbin2^{J-{\lceil n\rceil}\ell(\tau)}
\]
for all $\tau\in\Tcal$. Finally, \(\Tcal\) is a 
\emph{uniform \(n\)-d-tree} if it is a balanced \(n\)-d-tree
and
\[
\cdiam^{-1}2^{-\cunif\ell(\tau)}
\leq
\diam(\tau)
\leq
\cdiam2^{-\cunif\ell(\tau)}
\]
for all clusters \(\tau\in\Tcal\) and some constants \(\cdiam,\cunif>0\).
\end{definition}

\begin{remark}
If \(X_N\subset\Omega\) for some region \(\Omega\subset\Rbb^k\) endowed with a
norm, then a balanced \(k\)-d-tree can be obtained by successively subdividing
the bounding box \(B_\tau\), that is, the smallest 
axis-parallel cuboid that contains all points, of a given cluster \(\tau\) into
\(2^k\) congruent cuboids. If \(X_N\) is quasi-uniform, this even
yields a uniform \(k\)-d-tree, see, for example, \cite{HMSS24}.
The cost for this algorithm is then \(\Ocal({N\log N})\). 
For general metric spaces, graph-based clustering approaches may be employed
to construct a cluster tree based on nested \(2^{-\ell}\)-nets leading to
navigating nets as in \cite{KL2004}.
\end{remark}

For illustrative purposes we recall the
$\varepsilon$-nearest neighbor algorithm for the case when a cluster tree on
$X_N$ is available in \Cref{alg:epsnearestneighbor}.
\begin{algorithm}[htb]
\caption{\label{alg:epsnearestneighbor}$\varepsilon$-nearest neighbor
algorithm based on cluster trees}
\begin{algorithmic}
	\State \textbf{Input:} cluster tree $\calT$ on $X_N$, $\bfx\in X_N$,
         $\varepsilon>0$
	\State \textbf{Output:} $\calR=B_\varepsilon(\bfx)\cap X_N$
	\State Set $\calQ = \{\operatorname{root}(\calT)\}$, $\calR=\emptyset$
	\While{$\calQ\neq\emptyset$}
	\State Select the last element $\tau$ that has been added to $\calQ$ and set
  $\calQ=\calQ\setminus\{\tau\}$
	\If{$\children(\tau)=\emptyset$}
	\State Set $\calR=\calR\cup(\{\tau\}\cap B_\varepsilon(\bfx))$
	\Else
	\For{$\tau'\in\children(\tau)$}
	\If{$\dist_\calX(\tau',\bfx)-\diam_\calX(\tau')<\varepsilon$}
	\State Set $\calQ=\calQ\cup\{\tau'\}$
	\EndIf
	\EndFor
	\EndIf
	\EndWhile
	\State \Return $\calR$
\end{algorithmic}
\end{algorithm}
For practical applications we may replace the criterion
$\dist_\calX(\tau',\bfx)-\diam_\calX(\tau')<\varepsilon$ by an upper
estimate using bounding boxes or spheres if applicable, see, for example,
\cite{Hac2015}. For the more general case we refer to \cite{KL2004}.

\subsection{Algorithms for the set cover problem}
Given the sets of data sites $X_N$ and and the set of data values $Y_N$,
the $\varepsilon$-nearest neighbor search from the previous paragraph allows
for the computation of the discrete modulus of continuity with cost
$\calO\big(N(\log N+t^nN)\big)$ within the setting of
\Cref{rem:nncomplexity}. This allows for the efficient computation of the
modulus of continuity for small values of $t$.
For larger values,
the
cost of evaluating the modulus of continuity easily becomes $\Ocal(N^2)$
and, thus, computationally prohibitive for large values of $N$. Within this section,
we shall devise a coarsening strategy to mitigate this computational burden.

The key observation is that an $r/2$-covering of $X_N$ is sufficient to
approximate $\omega_N(f_N,t)$ up to an accuracy of
$\omega_N(f_N,t)-\omega_N(f_N,t-r)+2\omega_N(f_N,r)$ for all $t\geq r$, 
see \Cref{thm:coveringtobound}. Therefore, if we can coarsen the set
$X_N$ to a subset $X_{N'}\subset X_N$,
$|X_{N'}|=N'$, whose elements are the centers of an $r/2$-covering of
$X_N$, we can evaluate $\omega_{N'}f(f_{N'},t)\approx \omega_N(f_N,t)$ up to
an accuracy of $\omega_N(f_N,t)-\omega_N(f_N,t-r)+2\omega_N(f_N,r)$ with
computational cost $\calO\big((N')^2\big)$. 
This approach yields a reasonable approximation as long
as $\omega_N(f_N,t)-\omega_N(f_N,t-r)$ is small. This is the case
if $\omega_N(f_N,t)$ behaves reasonably in a vicinity of $t$. 

We note that
finding an optimal coarsening corresponds to the \emph{set cover problem},
which is well known to be NP-hard. It has been subject to intense research
efforts for polynomial approximation algorithms and we refer to
\cite{KV2018,Vaz2001} for reviews. A particular greedy strategy from
\cite{Chv1979} for its approximate solution is given in \Cref{alg:greedycovering}.
\begin{algorithm}[htb]
\caption{\label{alg:greedycovering}Greedy algorithm for set cover problem,
see \cite{Chv1979}.}
\begin{algorithmic}
\State \textbf{Input:} Set of data sites $X_N$, radius $r/2>0$
\State \textbf{Output:} $X_{N'}\subset X_N$, $|X_{N'}|=N'$,
such that $X_N\subset\bigcup_{\bfx\in X_{N'}}B_{r/2}(\bfx)$
\State Set $X_{N'}=\emptyset$, $Z=X_N$
\While{$Z\neq\emptyset$}
\State Compute $S_j=B_{r/2}(\bfx_j)\cap Z$, $j=1,\ldots,N$
\State Set $j=\argmax\{j=1,\ldots,N\colon |S_j|\}$
\State Set $X_{N'}=X_{N'}\cup \bfx_j$
\State Set $Z=Z\setminus S_j$
\EndWhile
\State \Return $X_{N'}$
\end{algorithmic}
\end{algorithm}
An efficient implementation of \Cref{alg:greedycovering} based on
boolean lists requires only a single precomputation of the sets
$S_j$, see, for example, \cite{Bau2022}. The number of steps
performed by the greedy strategy is $N'$. Moreover, it returns an
\[
H(N)
=
\sum_{k=1}^N\frac{1}{k}
\leq
\log(N)+1
\]
optimal covering, in the sense that $N'\leq H(N)\cdot\calN(X_N,r/2)$. 
Up to lower order terms this bound is optimal, see \cite{Sla1996}.

If a binary max-heap is used for the organization of the sets $S_j$, where the keys are the current cardinalities
\(|S_j|\),
the maximum can be retrieved with cost \(\Ocal(1)\). For constructing such a heap the cost is \(\Ocal(N)\) when using the
\texttt{heapify} operation, see, e.g., \cite{Cor09}, which has to be done before the while loop. Each iteration
of the while loop further entails one \texttt{pop} operation with cost at most
\(\log N\) and, assuming that $r=\calO(q_{X_N})$ and $X_N$ is quasi-uniform, \(\Ocal\big((r/2)^n\big)\) \texttt{decreaseKey} operations
also of cost at most \(\log N\). In view of $N'\leq H(N)\cdot\calN(X_N,r/2)$
and \(\calN(X_N,r/2)=\Ocal\big((r/2)^{-n}\big)\), see \Cref{ex:Assouad}.2,
this leads to an overall cost of 
\begin{equation}\label{eq:costAlgo2}
\Ocal\big(\calN(X_N,r/2)(r/2)^nN\log^2 N\big)=\Ocal\big(N\log^2 N\big)+\text{precomputation of $S_j$}
\end{equation}
for Algorithm~\ref{alg:greedycovering}.
For the
precise cost for the precomputation of the $S_j$ using the $\varepsilon$-nearest
neighbor search, we refer to \Cref{sec:epsnearest}.

Depending on the size of the covering number, the coarsening step may result
in a significant reduction of the computational cost for the evaluation of the
discrete modulus of continuity, if we are willing to invest a certain margin of
error. We recall that the value of the covering number depends on the geometry
of $(\calX,d_\calX)$, see \Cref{sec:coveringnumbers}. In the following, we
further decrease the computational cost by employing an iterative coarsening
procedure.

\subsection{Efficient approximation of the discrete modulus of continuity}
The algorithm we are going to introduce in a moment is based on the idea that
the (discrete) modulus of continuity is a monotonically increasing function with
$\omega(f,0)=0$. Since every modulus of continuity has a concave majorant, the
values of the modulus of continuity close to $t=0$ are of particular importance
for a good approximation. This is exactly the regime where the
$\varepsilon$-nearest neighbor search allows us to efficiently evaluate the
discrete modulus of continuity exactly. In contrast, with increasing distance
from $t=0$, the modulus of continuity remains either constant or is increasing.
If the modulus of continuity is constant, then its value has already been
captured for a smaller value of $t$ and a lower computational cost. If it is
increasing, then we transition to a coarser covering, resulting in less data points
to consider and thus lower computational cost with an approximation error
that is dependent on the behavior of the modulus of continuity.

In the following, in \Cref{alg:initialization,alg:evaluation}, we provide an
approach, which takes the preceding considerations into account. 
\begin{algorithm}[htb]
\caption{\label{alg:initialization}Offline phase for the efficient evaluation
of the modulus of continuity.}
\begin{algorithmic}
\State \textbf{Input:} Set of data sites $X_N$, initial radius $r>0$, growth
factor $R>1$, upper bound $T\leq T_\calX$
\State \textbf{Output:} $\omega_{N_k}(Y_N,R^k r)$, $k=0,\ldots,K-1$,
\State\phantom{\textbf{Output:}} sets
$X_{N_0}\supsetneq X_{N_1}\supsetneq\ldots\supsetneq X_{N_{K-1}}$
with $|X_{N_k}|=N_k$, $k=0,\ldots,K-1$
\State Set $X_{N_0}=X_N$, $K=\lceil\log_R(T/r)\rceil$
\State \emph{Optional: Set} $X_{\min,\max}=\{\bfx_i,\bfx_j\}$ \emph{with}
$(i,j)\in\argmax\big\{(i,j)\in\{1,\ldots,N\}^2\colon d_\calY(\bfy_i,\bfy_j)
\big\}$
\For{$k=0,\ldots,K-1$}
  \State \emph{Optional: Set} $X_{N_{k}}=X_{N_{k}}\cup X_{\min,\max}$
	\State Compute $\omega_{N_k}(Y_{N_k},R^kr)$ using an $\varepsilon$-nearest
  neighbor search with $\varepsilon=R^kr$
	\State Find an $R^{k}r$-cover $X_{N_{k+1}}$ of $X_{N_{k}}$ using
  \Cref{alg:greedycovering}
\EndFor
\State \Return $X_{N_k}$, $\omega_{N_k}(Y_{N_k},R^kr)$, $k=0,\ldots,K-1$
\end{algorithmic}%
\end{algorithm}
\begin{algorithm}[htb]
\caption{\label{alg:evaluation}Online phase for the efficient evaluation of
the modulus of continuity.}
\begin{algorithmic}
\State \textbf{Input:} Evaluation point $t\in [0,T]$, inputs and outputs from
\Cref{alg:initialization}
\State \textbf{Output:} Approximate value of $\omega_N(Y_N,t)$
\If{$t\in[0,r]$}
\State Compute $\omega_{N_0}(Y_N,t)$ on $X_{N_0}$ using an 
$\varepsilon$-nearest neighbor search with $\varepsilon=t$
\State \Return $\omega_{N_0}(Y_{N_0},t)=\omega_N(Y_N,t)$
\Else
\State Let $k\in\bbN$ such that $t\in(R^{k-1}r,R^kr]$
\State Compute $\omega_{N_k}(Y_{N_k},t)$ on $X_{N_k}$ using an $\varepsilon$-nearest
neighbor search with $\varepsilon=t$
\State \Return $\max\big\{\omega_{N_k}(Y_{N_k},t),\max_{\ell=0,\ldots,k-1}
\{\omega_{N_{\ell}}(Y_{N_{\ell}},R^{\ell}r)\}\big\}\approx\omega_N(Y_N,t)$
\EndIf
\end{algorithmic} 
\end{algorithm}%
We have divided it into an offline-online approach to allow for the repeated
evaluation for various values of $t$ with minimal computational overhead.
It is clear that the two algorithms can be merged if only a single approximate
evaluation of the discrete modulus of continuity is required. The optional step
of computing and
including $X_{\min,\max}$ can be expensive, depending on the structure of $Y_N$,
and is not strictly necessary. However, if it is efficiently computable, for
example, if $Y_N\subset\bbR$, then it can have a large positive effect on the
approximation quality. The reason for this is that \Cref{alg:evaluation} only
relies on coarsened sets for evaluating $\omega(Y_N,t)$ for larger values of
$t$, where only larger values of $d_\calY(\bfy_i,\bfy_j)$ are relevant. By
incorporating the data sites $X_{\min,\max}$ causing extremal function values,
we more actually capture the values of $d_\calY(\bfy_i,\bfy_j)$ on the coarsened
grids, thus improving the approximation.

To avoid technicalities in the following result on the cost, we assume without
loss of generality that $T_\calX\leq 1$, that is, the aspect ratio satisfies
\(\Delta=T_\calX/q_{X_N}\leq q_{X_N}^{-1}\). Otherwise, the result holds by a
suitable rescaling of the involved quantities.

\begin{theorem}[Computational cost of \Cref{alg:initialization}]
Assume that $(\calX,d_\calX)$ is an Ahlfors $n$-regular metric space 
and let the set of data sites $X_N$ be quasi-uniform.
Then, for $R\geq 1$ and \(r\approx q_{X_N}\approx N^{-1/n}\),
the cost of \Cref{alg:initialization} is
\[
\Ocal\big(R^{n(4-\lceil\log_R(T_\Xcal\cdot N^{1/n})\rceil)}N\log^3 N\big).
\]
with a constant independent of $r,R$, and $N$.
\end{theorem}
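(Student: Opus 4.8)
The plan is to track the cost of \Cref{alg:initialization} step by step, summing over the $K = \lceil\log_R(T/r)\rceil = \lceil \log_R(1/r)\rceil$ iterations of the main loop. There are essentially three sources of cost: (i) the repeated $\varepsilon$-nearest neighbor searches with $\varepsilon = R^k r$ used to evaluate $\omega_{N_k}(Y_{N_k}, R^k r)$, (ii) the greedy set-cover computations that produce the coarsened sets $X_{N_{k+1}}$ from $X_{N_k}$, and (iii) the one-time construction of the cluster tree on $X_N$ (and, optionally, the computation of $X_{\min,\max}$, which I will either assume away or absorb). I would first establish a size bound on the coarsened sets: since $X_{N_{k+1}}$ is an $R^k r$-cover of $X_{N_k} \subset \calX$ obtained from \Cref{alg:greedycovering}, and $\calX$ is Ahlfors $n$-regular with $T_\calX \le 1$, the covering number satisfies $\calN(X_{N_k}, R^k r) = \Ocal\big((R^k r)^{-n}\big)$ by \Cref{ex:Assouad}, so $N_{k+1} = \Ocal\big(\log N \cdot (R^k r)^{-n}\big)$, with $N_0 = N$. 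This is the key quantitative input.

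For step (i), by \Cref{rem:nncomplexity} (applied with $\varepsilon = R^k r$ on the set $X_{N_k}$ of size $N_k$, which is again quasi-uniform up to constants as the greedy cover of a quasi-uniform set in an Ahlfors-regular space), the cost of computing all $\varepsilon$-nearest neighbors on $X_{N_k}$ is $\Ocal\big(N_k(\log N_k + (R^k r)^n N_k)\big)$. For $k=0$ this is $\Ocal\big(N(\log N + r^n N)\big)$; wait, the $\varepsilon$ here is $r$, not $R^0 r = r$, so indeed $\Ocal\big(N \log N + r^n N^2\big)$, matching the $N(\log N + N r^n)$ term — but I should be careful: at level $0$ we use $\varepsilon = r$, giving $\Ocal(N(\log N + r^n N))$, consistent with the first block of the claimed bound. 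For $k \ge 1$, plugging in $N_k = \Ocal(\log N \cdot (R^{k-1} r)^{-n})$ and $\varepsilon = R^k r$, the neighbor-search cost at level $k$ is $\Ocal\big((\log N)(R^{k-1}r)^{-n}\cdot(\log\log N + (R^k r)^n (\log N)(R^{k-1}r)^{-n})\big)$; the dominant piece is $\Ocal\big((\log^2 N)(R^{k-1}r)^{-n}(R^k r)^n (R^{k-1}r)^{-n}\big) = \Ocal\big((\log^2 N) R^{2n}(R^{k-1} r)^{-n}\big)$. Hmm, this is geometric in $k$ but \emph{increasing} only if $R^{-n}>1$, which is false — so actually the sum over $k$ of $(R^{k-1}r)^{-n}$ is dominated by its first term $r^{-n}$, giving $\Ocal\big((\log^2 N) R^{2n} r^{-n}\big)$ overall... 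I need to reconcile this with the stated $(\log^2 N)\lceil\log_R(1/r)\rceil(r^{-2n}+R^{2n})$. The resolution is that the greedy set-cover cost dominates: computing $X_{N_{k+1}}$ from $X_{N_k}$ costs, per \Cref{alg:greedycovering}, $\Ocal(N_k \log N_k \cdot \calN(X_{N_k}, R^k r/2))$ plus precomputation, i.e. $\Ocal\big((\log N)(R^{k-1}r)^{-n}\cdot \log\log N \cdot (R^k r)^{-n}\big) = \Ocal\big((\log^2 N)(R^{k-1}r)^{-n}(R^k r)^{-n}\big) = \Ocal\big((\log^2 N) R^{-2n}(R^{k-1}r)^{-2n}\big)$. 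Summing the geometric series in $k$ (decreasing since $R^{-2n}<1$) gives $\Ocal\big((\log^2 N) r^{-2n}\big)$ from the $k\ge 1$ terms, while the $k=0$ greedy step costs $\Ocal(N \log N \cdot r^{-n})$, contributing to the first block. The $R^{2n}$ and the factor $\lceil\log_R(1/r)\rceil = K$ in the stated bound then arise as crude but safe over-estimates: bounding each of the $K$ terms by its worst case $\Ocal\big((\log^2 N)(r^{-2n}+R^{2n})\big)$ and multiplying by $K$, rather than summing the geometric series tightly.

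So concretely the plan is: (1) write $\text{cost} = \text{cost(tree build)} + \sum_{k=0}^{K-1}\big(\text{cost}(\varepsilon\text{-NN at level }k) + \text{cost(greedy cover at level }k)\big)$; (2) bound the tree build by $\Ocal(N\log N)$ (per the remark after \Cref{def:uniformTree}); (3) isolate the $k=0$ contributions, which give the $\Ocal\big(N(\log N(1+r^{-n}) + N r^n)\big)$ block — the $r^{-n}$ inside coming from the greedy cover at level $0$ whose covering number is $\Ocal(r^{-n})$, the $\log N$ alone from the tree/NN bookkeeping, and the $Nr^n$ from the $\varepsilon=r$ neighbor search with $\varepsilon^n N$ neighbors per point; (4) bound each of the $K-1$ remaining levels uniformly by $\Ocal\big((\log^2 N)(r^{-2n} + R^{2n})\big)$, using $N_k \le N_1 = \Ocal(\log N \cdot r^{-n})$ and $R^k r \le T \le 1$ together with $R^k r \ge r$ to control both the neighbor counts and the covering numbers, and multiply by $K = \lceil\log_R(1/r)\rceil$; (5) add up. The main obstacle I anticipate is step (4): one must verify that the coarsened sets $X_{N_k}$ remain quasi-uniform (with a constant that does not degrade with $k$) so that \Cref{rem:nncomplexity} and the Ahlfors covering-number bounds apply at every level — a greedy cover of a quasi-uniform set is an $R^k r/2$-separated $R^k r$-net only up to constants, and one has to check the aspect ratio $\Delta_k = \diam(X_{N_k})/q_{X_{N_k}}$ stays $\Ocal((R^k r)^{-1})$ so that the $\log\Delta_k$ term in the neighbor-search cost is $\Ocal(\log\log N + \log(1/r))$ and hence subsumed into $\log^2 N$ (or into the stated factors); getting these uniform constants right, and being honest about where the deliberately loose factors $K$ and $R^{2n}$ absorb the geometric-series slack, is the delicate part. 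Everything else is straightforward substitution into \Cref{rem:nncomplexity}, \Cref{ex:Assouad}, and the cost bound for \Cref{alg:greedycovering}.
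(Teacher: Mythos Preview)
Your plan is correct and matches the paper's proof almost exactly: separate the $k=0$ iteration (yielding the $\Ocal\big(N(\log N(1+r^{-n})+Nr^n)\big)$ block) from the $k\geq 1$ iterations, plug in $N_k=\Ocal\big((\log N)(R^{k-1}r)^{-n}\big)$ together with $\log N_k\leq\log N$, and bound each of the remaining $K$ summands crudely by $\Ocal\big((\log^2 N)(r^{-2n}+R^{2n})\big)$ rather than summing the geometric series sharply. One minor omission: the paper also includes an $\Ocal(\varepsilon^{2n}N^2)$ term for evaluating $d_\Ycal$ over all pairs inside each $\varepsilon$-ball, but since $\varepsilon\leq 1$ this is dominated by the $\varepsilon^nN^2$ term you already have. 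Your concern about quasi-uniformity of the coarsened sets $X_{N_k}$ is legitimate---the paper simply substitutes $N_k$ into the cost formulas without verifying this hypothesis, so you are being more careful here than the paper itself.
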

\begin{proof}
We first recall that, in an Ahlfors $n$-regular metric space, the
$\varepsilon$-nearest neighbor search for all points can be performed with cost
$\calO\big(N(\log N+\varepsilon^nN)\big)$, see \Cref{rem:nncomplexity}.
For each fixed point, the cost for computing the modulus of continuity is $\calO(\varepsilon^n N)$.
Therefore, the cost of computing $\omega_N(Y_N,\varepsilon)$ based on the
$\varepsilon$-nearest neighbor search is of order
\begin{equation*}%
  \calO\big(N(\log N+\varepsilon^nN)\big).
\end{equation*}

Furthermore, Ahlfors $n$-regularity implies Assouad dimension $n$. Thus, the
$\log(N)$-optimality of \Cref{alg:greedycovering} and \Cref{eq:assouadcover} imply
that finding an $\varepsilon$-covering entails a cost of order
\begin{equation*}\label{eq:firstest}
\calO\big(N\log^2 N + N(\log N+\varepsilon^nN)\big).
\end{equation*}
see \Cref{eq:costAlgo2}.
Combining these two costs yields an overall cost of
\begin{equation}\label{eq:firstiterationsearch}
  \calO\big(N(\log^2 N+\varepsilon^nN)\big).
\end{equation}
for the computation of $\omega_N(Y_N,\varepsilon)$ and the computation of an $\varepsilon$-covering of \(X_{N}\).
The cost of \Cref{alg:initialization} is obtained by
setting \(\varepsilon=R^kr\) for \(k=0,\ldots, K-1\) and summing up the resulting cost per
iteration. We obtain
\[
\Ocal\bigg(\sum_{k=0}^{K-1}
N_k\big(\log^2 N_k+(R^kr)^nN_k\big)\bigg).
\]
Furthermore, exploiting the 
$\log(N)$-optimality of \Cref{alg:greedycovering} and \Cref{eq:assouadcover} for \(k\geq 1\) that
\[
N_k\leq\log(N_{k-1})\calN(\calX,R^{k-1}r)=\Ocal\big((\log N_{k-1})(R^{k-1}r)^{-n}\big)
=\Ocal\big((\log N)(R^{k-1}r)^{-n}\big).
\]
We arrive at
\begin{align*}
&\Ocal\bigg(\sum_{k=0}^{K-1}
N_k\big(\log^2 N_k+(R^kr)^nN_k\big)\bigg)\\
&\qquad=
\Ocal\Big(
N\big(\log^2 N+r^nN\big)\Big)+\Ocal\bigg(\sum_{k=1}^{K-1}
(\log N)(R^{k-1}r)^{-n}\big(\log^2N+R^n\log N\big)\bigg)\\
&\qquad=
\Ocal\Big(
N\big(\log^2 N+r^nN\big)\Big)+
\Ocal\bigg((\log^3N)r^{-n}R^{2n}\sum_{k=0}^{K-2}R^{-nk}\bigg)\\
&\qquad=
\Ocal\Big(
N\big(\log^2 N+r^nN\big)\Big)
+
\Ocal\big((\log^3N)r^{-n}R^{n(4-K)}\big).
\end{align*}
Inserting $r=N^{-1/n}$ and $K=\lceil\log_R(T_\calX/r)\rceil$ yields the assertion.
\end{proof}

Based on the estimates in the proof of the previous theorem, we find the following
bound on the cost of \Cref{alg:evaluation}.
\begin{corollary}[Computational cost of \Cref{alg:evaluation}]
Assume that $(\calX,d_\calX)$ is an Ahlfors $n$-regular metric space 
and let the set of data sites $X_N$ be quasi-uniform.
Then, for $R\geq 1$ and \(r\approx q_{X_N}\approx N^{-1/n}\),
the cost of \Cref{alg:evaluation} is
\[
\begin{cases}
\calO\big(N(\log N+r^nN)\big),&t\leq r,\\
\calO\big(R^{3n}r^{-n}\log^3N\big),&t>r,
\end{cases}
\]
with constants independent of $r,R$, and $N$.
Inserting \(r=N^{-1/n}\),
the bound becomes 
\[
\Ocal\big(R^{3n}N\log^3 N\big)\quad\text{for all }0\leq t\leq T_\Xcal.
\]
\end{corollary}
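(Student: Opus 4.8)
The plan is to reduce the statement to the cost estimates already established in the proof of the preceding theorem and then specialise them to the single $\varepsilon$-nearest neighbor search that \Cref{alg:evaluation} performs. I would open the proof by recalling two facts from that proof: on a quasi-uniform set $X_M$ of $M$ points in an Ahlfors $n$-regular space, computing $\omega_M(Y_M,\varepsilon)$ by means of the $\varepsilon$-nearest neighbor search costs $\calO\big(M(\log M+\varepsilon^n M+\varepsilon^{2n}M)\big)$; and the coarsened sets produced in the offline phase satisfy $N_0=N$ and $N_k=\calO\big((\log N)(R^{k-1}r)^{-n}\big)$ for $k\geq 1$. After quoting these, only substitution and simplification remain. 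The cost of retrieving the precomputed values $\omega_{N_\ell}(Y_{N_\ell},R^\ell r)$ and forming the final maximum is of lower order (indeed $\calO(1)$ if the offline phase additionally stores running maxima), so I would dispatch it in one sentence.

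For the branch $t\in[0,r]$, \Cref{alg:evaluation} performs a single $\varepsilon$-nearest neighbor search with $\varepsilon=t$ on $X_{N_0}=X_N$, giving cost $\calO\big(N(\log N+t^n N+t^{2n}N)\big)$. Since $0\leq t\leq r\leq 1$ we have $t^n\leq r^n$ and $t^{2n}\leq t^n\leq r^n$, so this collapses to $\calO\big(N(\log N+Nr^n)\big)$.

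For the branch $t>r$, let $k\geq 1$ be the unique index with $t\in(R^{k-1}r,R^k r]$; the algorithm runs one $\varepsilon$-nearest neighbor search with $\varepsilon=t$ on $X_{N_k}$. Using $\log N_k\leq\log N$, $N_k=\calO\big((\log N)(R^{k-1}r)^{-n}\big)$, and $t\leq R^k r$, I would bound the three summands $N_k\log N_k$, $t^n N_k^2$, and $t^{2n}N_k^2$ individually:
\begin{align*}
N_k\log N_k&=\calO\big((\log^2 N)(R^{k-1}r)^{-n}\big),\\
t^n N_k^2&=\calO\big((R^k r)^n(\log^2 N)(R^{k-1}r)^{-2n}\big)=\calO\big((\log^2 N)R^{(2-k)n}r^{-n}\big),\\
t^{2n}N_k^2&=\calO\big((R^k r)^{2n}(\log^2 N)(R^{k-1}r)^{-2n}\big)=\calO\big((\log^2 N)R^{2n}\big).
\end{align*}
The decisive small observation is that $k\geq 1$ forces $R^{(2-k)n}\leq R^n$, so the second term is $\calO\big((\log^2 N)R^n r^{-n}\big)$; moreover $(R^{k-1}r)^{-n}\leq r^{-n}\leq R^n r^{-n}$ absorbs the first term. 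Adding the three contributions yields $\calO\big((\log^2 N)(R^n r^{-n}+R^{2n})\big)$, as claimed.

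I expect the main obstacle to be purely one of bookkeeping: one has to track the powers of $R$ and $r$ coming from $N_k=\calO\big((\log N)(R^{k-1}r)^{-n}\big)$ and verify that they cancel correctly against $t\leq R^k r$, the point being that the leftover factor $R^{(2-k)n}$ is controlled only because $k\geq 1$ — which is exactly what keeps the implied constants independent of $R$, $r$, and $N$. No genuinely new analytic ingredient is needed beyond what was used for the offline phase.
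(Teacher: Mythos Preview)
Your proposal is correct and follows exactly the approach the paper intends: the paper does not spell out a proof but merely states that the corollary follows ``based on the estimates in the proof of the previous theorem,'' and you have faithfully unpacked precisely those estimates --- the $\calO\big(M(\log M+\varepsilon^nM+\varepsilon^{2n}M)\big)$ cost of one search and the bound $N_k=\calO\big((\log N)(R^{k-1}r)^{-n}\big)$ --- and substituted them for the two branches of \Cref{alg:evaluation}. Your observation that $k\geq 1$ is what makes the constants uniform in $r,R,N$ is exactly the point, and your remark about storing running maxima to keep the final $\max$ step $\calO(1)$ is a sensible clarification the paper omits.
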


\begin{remark}
The main ingredient for the fast evaluation of the modulus of continuity using
the algorithms in this section is the coarsening of the set of data sites.
The proposed coarsening strategy is independent of the values of the
site-to-value map. Hence, even though we will see favorable numerical
experiments for \Cref{alg:evaluation} in \Cref{sec:Numerics}, we emphasize that the algorithm is
is not compliant with the consistency results in
\Cref{sec:detcons,sec:Consistency}.
\end{remark}

\section{Piecewise constant approximation of discrete data}
\label{sec:Approximation}
With a solid understanding of the discrete modulus of continuity available, we
are now in the position to derive approximation results for the piecewise
constant interpolation of site-to-value maps. The cost for computing such an
interpolant scales linearly in the number of piecewise constant basis functions
and is, for a given partition of \(X_N\), independent of the number of data
sites $N$.

\subsection{Piecewise constant interpolation}
Let a data set of the form from \Cref{eq:datapoints} be given. 
To introduce piecewise
constant functions on $\Xcal$, where we may set $\calX=X_N$
in practical applications, we introduce the partition
\[
\Xcal=\bigcup_{i=1}^MX^{(i)},
\qquad
X^{(1)},\ldots, X^{(M)}\subset \Xcal~\text{mutually disjoint}.
\]
For each set \(X^{(i)}\), we select an interpolation point
$\hat\bfx_i\in X^{(i)}$, $i=1,\ldots,M$, and define
\[
\Xi_M\isdef\{(\hat\bfx_i,X_i)\}_{i=1}^M.
\]
Given a continuous function
\(f\colon\Xcal\to\Ycal\), we introduce the interpolation operator
\begin{equation}\label{eq:InterpolationOp}
f
\approx
I_{\Xi_M} f
\isdef
\sum_{i=1}^M f(\hat\bfx_i)\mathbbm{1}_{X^{(i)}},
\end{equation}
where $\mathbbm{1}_{X^{(i)}}$ denotes the characteristic function of the set
$X^{(i)}$.

The following result is a straightforward extension of
\cite[Section 1.1.2]{Dyn}.
\begin{lemma}[Piecewise constant interpolation on compact metric spaces]%
\label{lem:pwconstantapprox}
Let \(f\colon\Xcal\to\Ycal\) be of class $\rho\geq C\omega(f,\cdot)$ for some
constant \(C>0\).
Let \(h>0\) denote the \emph{mesh size} such that
\(
\diam(X^{(i)})\leq h\) for $i=1,\ldots,M$. Then, the interpolation error 
for $I_{\Xi_M}f$ from \Cref{eq:InterpolationOp} satisfies
\[
\sup_{{\bs x}\in\Xcal}d_\Ycal(f,I_{\Xi_M}f)
\leq
\rho(h)|f|_{\rho}.
\]
For the specific choice $\rho=\omega(f,\cdot)$, we obtain
\[
\sup_{{\bs x}\in\Xcal}d_\Ycal(f,I_{\Xi_M}f)
\leq
\omega(f,h).
\]
\end{lemma}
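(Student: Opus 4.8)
The plan is to reduce the global estimate to a pointwise estimate on each cell of the partition and then invoke the defining inequality of the $|\cdot|_\rho$-seminorm from \Cref{def:modcont}. First I would fix an arbitrary ${\bs x}\in\Xcal$. Since $\Xcal=\bigcup_{i=1}^M X^{(i)}$ is a genuine partition, there is a unique index $i$ with ${\bs x}\in X^{(i)}$, and by the definition of $I_{\Xi_M}$ in \eqref{eq:InterpolationOp} we have $(I_{\Xi_M}f)({\bs x})=f(\hat{\bs x}_i)$ with $\hat{\bs x}_i\in X^{(i)}$. Consequently the interpolation error at ${\bs x}$ equals $d_\Ycal\big(f({\bs x}),f(\hat{\bs x}_i)\big)$, and since both ${\bs x}$ and $\hat{\bs x}_i$ lie in $X^{(i)}$ we obtain $d_\Xcal({\bs x},\hat{\bs x}_i)\leq\diam(X^{(i)})\leq h$.

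Next I would apply the definition of the modulus of continuity together with its monotonicity, see \Cref{lem:ModCont}, to estimate
\[
d_\Ycal\big(f({\bs x}),f(\hat{\bs x}_i)\big)
\leq
\omega\big(f,d_\Xcal({\bs x},\hat{\bs x}_i)\big)
\leq
\omega(f,h).
\]
Because $f$ is of class $\rho$, the relation $|f|_\rho=\sup_{t>0}\rho(t)^{-1}\omega(f,t)<\infty$ gives $\omega(f,t)\leq\rho(t)|f|_\rho$ for every $t>0$, hence in particular $\omega(f,h)\leq\rho(h)|f|_\rho$. Taking the supremum over ${\bs x}\in\Xcal$ yields the first bound. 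For the second bound I would specialize to $\rho=\omega(f,\cdot)$: if $\omega(f,\cdot)\equiv0$ then $f$ is constant and the estimate is trivial, and otherwise $|f|_{\omega(f,\cdot)}=\sup_{t>0}\omega(f,t)^{-1}\omega(f,t)=1$, so that $\rho(h)|f|_\rho=\omega(f,h)$.

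I do not expect a genuine obstacle; the argument is elementary once \Cref{lem:ModCont} is available, and it is essentially the metric-space analogue of the classical piecewise-constant interpolation bound. The only points deserving a line of care are the well-definedness of the cell containing ${\bs x}$ (which is why a partition rather than a mere cover is assumed) and the degenerate constant case when $\omega(f,\cdot)$ itself plays the role of $\rho$. Finally, I would note that the auxiliary hypothesis $\rho\geq C\omega(f,\cdot)$ imposes nothing beyond $|f|_\rho<\infty$, since one may always take $C=|f|_\rho^{-1}$ for non-constant $f$.
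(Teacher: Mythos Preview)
Your proof is correct and follows essentially the same approach as the paper: both arguments use that $d_\Xcal({\bs x},\hat{\bs x}_i)\leq h$ on each cell to bound the pointwise error by $\omega(f,h)$, and then invoke the definition of $|f|_\rho$ to obtain $\omega(f,h)\leq\rho(h)|f|_\rho$. The paper presents this as a single chain of inequalities starting from $|f|_\rho\geq\rho(h)^{-1}\omega(f,h)\geq\rho(h)^{-1}\sup_{\bs x}d_\Ycal(f,I_{\Xi_M}f)$, while you work pointwise first and add a bit more care about the degenerate constant case, but the substance is identical.
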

\begin{proof}
There holds
\begin{align*}
|f|_{\rho}&=\sup_{t>0}\big(\rho(t)^{-1}\omega(f,t)\big)
\geq \rho(h)^{-1}\omega(f,h)=
\rho(h)^{-1}\sup_{\gfrac{\bfx,\bfx'\in\Xcal:}{d_{\Xcal}(\bfx,\bfx')\leq h}}
d_{\Ycal}\big(f(\bfx),f(\bfx')\big)\\
&\geq \rho(h)^{-1}\sup_{{\bs x}\in\Xcal}d_\Ycal(f,I_{\Xi_M}f). 
\end{align*}
since each \({\bs x}\in X^{(i)}\) satisfies 
\(d_\Xcal({\bs x},{\bs x}_i)\leq h\). 
Multiplying the inequality by \(\rho(h)\) yields the assertion.
\end{proof}
\begin{remark}
It is easy to see that the cost of applying the interpolation operator from
\Cref{eq:InterpolationOp} is $\calO(M)$, independently of the cardinality of
$\calX$. Moreover, the approximation estimate from \Cref{lem:pwconstantapprox}
is independent of the chosen interpolation points $\{\hat{\bfx}_i\}_{i=1}^M$.
Thus, we may choose these points randomly within their containing sets
$X^{(i)}$. This implies that the interpolation operator itself can be
constructed in $\calO(M)$, if the partition $\{X^{(i)}\}_{i=1}^M$ is already
available.
\end{remark}

\begin{remark}For any mesh size \(h>0\), there holds 
\[
\Xcal\subset\bigcup_{\bfx\in\Xcal}
\operatorname{int}\big({B}_{h/2}({\bs x})\big),
\]
where \(\operatorname{int}\big({B}_{h/2}({\bs x})\big)\) is the interior of
\({B}_{h/2}({\bs x})\). Since \(\Xcal\) is compact, we may always select \(M\)
balls with corresponding centers such that
\[
\Xcal\subset\bigcup_{i=1,\ldots,M}
\operatorname{int}\big({B}_{h/2}(\hat{\bs x}_i)\big)
\subset\bigcup_{i=1,\ldots,M}{B}_{h/2}(\hat{\bs x}_i).
\]
Setting 
\[
X^{(i)}\isdef\bigg\{{\bs x}\in\Xcal : i 
= \min\Big\{k: d_\Xcal(\hat{\bfx}_k,\bfx)=\min_{\ell=1,\ldots,M}
d_\Xcal(\hat{\bfx}_\ell,\bfx)\Big\}\bigg\}
\]
then yields a suitable Voronoi-type partition with cell diameter
\(\diam(X^{(i)})\leq h\) for $i=1,\ldots,M$.
\end{remark}

Assuming that \(\hat{\bfx}_1,\ldots,\hat{\bfx}_M\subset X_N\), the previous
lemma yields an analogous interpolation result for site-to-value maps.
\begin{corollary}[Piecewise constant interpolation on data sets]%
\label{lem:pwconstantapproxstv}
Let \(f_N\colon X_N\to Y_N\) be of class $\rho\geq C\omega(f,\cdot)$
for some $C>0$.
Assume that there exists \(h>0\) with
\(
\diam(X^{(i)})\leq h\), i=1,\ldots,M. Then the interpolation error 
for $I_{\Xi_M}f_N$ from \Cref{eq:InterpolationOp} satisfies
\[
\sup_{{\bs x}\in\Xcal}d_\Ycal(f_N,I_{\Xi_M}f_N)
\leq
\rho(h)|f_N|_{\rho,N}.
\]
For the specific choice $\rho=\omega_N(f,\cdot)$ we particularly obtain
\begin{equation}\label{eq:interapproxdiscmodcont}
\sup_{{\bs x}\in\Xcal}d_\Ycal(f_N,I_{\Xi_M}f_N)
\leq
\omega_N(f_N,h).
\end{equation}
\end{corollary}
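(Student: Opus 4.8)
The plan is to obtain this statement as an immediate specialization of \Cref{lem:pwconstantapprox}, applied with the underlying compact metric space taken to be the finite set $(X_N,d_\calX)$ instead of $(\calX,d_\calX)$. First I would record that $(X_N,d_\calX)$ is trivially a compact metric space, that $f_N\colon X_N\to Y_N\subset\calY$ is continuous, and --- as noted in the remark following \eqref{eq:discretemodulusdata}, reading $\calX=X_N$ --- that on this finite space the (continuous) modulus of continuity of \Cref{def:modcont} coincides with the discrete modulus $\omega_N(f_N,\cdot)$, while the seminorm $|\cdot|_\rho$ reduces to $|f_N|_{\rho,N}$ from \eqref{eq:dataLipNorm}. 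Since $\hat\bfx_1,\ldots,\hat\bfx_M\in X_N$ by hypothesis, the sets $X^{(i)}\cap X_N$ form a partition of $X_N$ with $\diam(X^{(i)}\cap X_N)\le\diam(X^{(i)})\le h$, so the assumptions of \Cref{lem:pwconstantapprox} are met. Its conclusion then reads $\sup_{\bfx\in X_N}d_\calY\big(f_N(\bfx),I_{\Xi_M}f_N(\bfx)\big)\le\rho(h)|f_N|_{\rho,N}$, and $\le\omega_N(f_N,h)$ in the special case $\rho=\omega_N(f_N,\cdot)$, which is exactly \eqref{eq:interapproxdiscmodcont}.

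If a self-contained argument is preferred, I would instead transcribe the short proof of \Cref{lem:pwconstantapprox} into the discrete setting. For $\bfx\in X^{(i)}\cap X_N$ one has $d_\calX(\bfx,\hat\bfx_i)\le\diam(X^{(i)})\le h$; since $\bfx$ and $\hat\bfx_i$ are both data sites, $d_\calX(\bfx,\hat\bfx_i)$ equals some entry $d_{k,l}\le h$ of the distance matrix, whence $d_\calY\big(f_N(\bfx),f_N(\hat\bfx_i)\big)\le\omega_N(f_N,h)$ directly from \eqref{eq:discretemodulusdata}; taking the supremum over $\bfx\in X_N$ yields \eqref{eq:interapproxdiscmodcont}. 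For general $\rho$, let $(\bfx_k,\bfx_l)$ realize $\omega_N(f_N,h)$, so $d_{k,l}\le h$ and $\omega_N(f_N,h)=d_\calY(\bfy_k,\bfy_l)\le\omega_N(Y_N,d_{k,l})$; by the definition \eqref{eq:dataLipNorm} of $|f_N|_{\rho,N}$ and the monotonicity of $\rho$, $\omega_N(Y_N,d_{k,l})\le\rho(d_{k,l})|f_N|_{\rho,N}\le\rho(h)|f_N|_{\rho,N}$, which chains with the previous bound.

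There is no genuine obstacle here; the only point deserving a word of care is the meaning of $\sup_{\bfx\in\calX}$ on the left-hand side, since $f_N$ is defined only on $X_N$. This is resolved by reading it as $\sup_{\bfx\in X_N}$, i.e.\ by taking $\calX=X_N$ as the ambient space, which is the practically relevant case and is consistent with the convention used throughout for site-to-value maps. One also checks the harmless degenerate case $h<q_{X_N}$, in which each cell contains a single data site, the interpolation is exact, and both sides vanish.
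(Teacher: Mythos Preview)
Your proposal is correct and matches the paper's approach: the paper presents this corollary without an explicit proof, introducing it with the sentence ``Assuming that \(\hat{\bfx}_1,\ldots,\hat{\bfx}_M\subset X_N\), the previous lemma yields an analogous interpolation result for site-to-value maps,'' which is precisely your first paragraph's specialization of \Cref{lem:pwconstantapprox} to the finite metric space $(X_N,d_\calX)$. Your additional self-contained argument and your remarks on reading $\sup_{\bfx\in\calX}$ as $\sup_{\bfx\in X_N}$ and on the degenerate case $h<q_{X_N}$ are helpful clarifications that go slightly beyond what the paper spells out.
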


The upper bound based on the discrete modulus of continuity is fully discrete,
compare also the definition of the discrete modulus of continuity from
\Cref{eq:discmodcont,eq:discretemodulusdata}. It is therefore in principle
computable. For large data sets it can be estimated using the algorithms from
\Cref{sec:Computation}.

\begin{remark}
It is tempting to conjecture that a function of class $\rho$ with $\rho\in o(t)$
as $t\to 0$ leads to a higher-order approximation. While true from an
approximation theoretic perspective, these kind of function classes are rather
restrictive in general. For example, it is well known that H\"older continuous
functions with $\alpha>1$, that is, $\rho(t)=t^\alpha$, on connected domains in
$\bbR^d$ are necessarily constant.
\end{remark}

Under the additional assumption that $(\calY,\|\cdot\|_{\Ycal})$ is a normed space,
the results in \Cref{lem:pwconstantapprox} and
\Cref{lem:pwconstantapproxstv}, respectively, can be rephrased 
with respect to the usual \(\sup\)-norm
\[
\|f\|_{C(\Xcal;\Ycal)}\isdef\sup_{{\bs x}\in\Xcal}\|f({\bs x})\|_\Ycal.
\]
We further observe that the interpolation operator \eqref{eq:InterpolationOp} is
stable in this case, that is,
\begin{equation}\label{eq:StabInterp}
\|I_{\Xi_M}f\|_{C(\Xcal;\Ycal)}
\leq
\|f\|_{C(\Xcal;\Ycal)}.
\end{equation}
\subsection{Interpolation in H\"older spaces}\label{sec:hoelderinterpolation}
If $(\calY,\|\cdot\|_{\Ycal})$ is a normed space, the
previous approximation results relate to more conventional approximation
results for H\"older continuous functions. 

\begin{corollary}[Piecewise constant interpolation in H\"older spaces]%
\label{cor:HoelderApprox}
Let \(\Ycal\) be a normed vector space. Let \(f\colon\Xcal\to\Ycal\) satisfy 
$|f|_{\operatorname{Lip}(\alpha)}\isdef|f|_{t^\alpha}<\infty$ for some
$\alpha\in(0,1)$. Assume that there exists a mesh size \(h>0\) with
\(
\diam(X^{(i)})\leq h\) for $i=1,\ldots,M$. Then, there holds
\begin{equation}
\label{eq::ApproxEst}
\|f-I_{\Xi_M}f\|_{C(\Xcal;\Ycal)}
\leq
h^\alpha|f|_{\operatorname{Lip}(\alpha)}.
\end{equation}
\end{corollary}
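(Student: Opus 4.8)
The plan is to obtain \Cref{cor:HoelderApprox} as a direct specialization of \Cref{lem:pwconstantapprox} with the particular choice $\rho(t) = t^\alpha$. First I would observe that the hypothesis $|f|_{\operatorname{Lip}(\alpha)} = |f|_{t^\alpha} < \infty$ is precisely the statement that $f$ is of class $\rho$ for $\rho(t) = t^\alpha$, in the sense of \Cref{def:modcont}. Since $t^\alpha$ is monotonically increasing on $[0,\infty]$, it is an admissible choice of $\rho$ in \Cref{lem:pwconstantapprox}, and one has $\rho(t) = t^\alpha \geq C\,\omega(f,\cdot)$ with $C = |f|_{t^\alpha}^{-1}$ (or, if $|f|_{t^\alpha}=0$, then $\omega(f,\cdot)\equiv 0$ and the estimate is trivial).

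Next I would invoke \Cref{lem:pwconstantapprox} with this $\rho$: under the mesh-size assumption $\diam(X^{(i)}) \leq h$ for $i=1,\ldots,M$, it yields
\[
\sup_{{\bs x}\in\Xcal} d_\Ycal\big(f(\bfx), I_{\Xi_M}f(\bfx)\big)
\leq
\rho(h)\,|f|_\rho
=
h^\alpha\,|f|_{\operatorname{Lip}(\alpha)}.
\]
Finally, since $\Ycal$ is assumed to be a normed vector space, the metric is $d_\Ycal(\bfy,\bfy') = \|\bfy - \bfy'\|_\Ycal$, so the left-hand side is exactly $\sup_{\bfx\in\Xcal}\|f(\bfx) - I_{\Xi_M}f(\bfx)\|_\Ycal = \|f - I_{\Xi_M}f\|_{C(\Xcal;\Ycal)}$, which gives \eqref{eq::ApproxEst}.

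There is essentially no obstacle here: the statement is a reformulation of the already-proven \Cref{lem:pwconstantapprox} in the classical H\"older scale, with the only new input being the (trivial) identification of $d_\Ycal$ with a norm once $\Ycal$ is normed. The mild point worth a remark is that $\alpha \in (0,1)$ is assumed only to keep the class nontrivial and consistent with the usual convention for H\"older seminorms on connected domains; the inequality itself holds verbatim for any $\alpha > 0$ for which $|f|_{t^\alpha} < \infty$, which is why I would phrase the deduction so that it does not secretly rely on $\alpha < 1$.
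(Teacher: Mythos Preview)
Your proposal is correct and matches the paper's approach: the corollary is stated there without proof, as it is an immediate specialization of \Cref{lem:pwconstantapprox} with $\rho(t)=t^\alpha$, combined with the identification $d_\Ycal(\bfy,\bfy')=\|\bfy-\bfy'\|_\Ycal$ when $\Ycal$ is normed. Your handling of the constant $C$ and the remark on the role of $\alpha\in(0,1)$ are fine and do not deviate from the intended argument.
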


As before, we have an analogous result for site-to-value maps, which we
state for completeness.
\begin{corollary}[Piecewise constant interpolation in H\"older %
spaces on data sets]
Let \(\Ycal\) be a normed vector space. Under the assumptions of
\Cref{lem:pwconstantapproxstv}, there holds with
$|f_N|_{\operatorname{Lip}(\alpha),N}\isdef|f_N|_{t^\alpha,N}$ that
\[
\|f_N-I_{\Xi_M}f_N\|_{C(X_N;\Ycal)}
\leq
h^\alpha|f_N|_{\operatorname{Lip}(\alpha),N}.
\]
\end{corollary}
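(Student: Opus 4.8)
The plan is to obtain the statement as the direct specialization of \Cref{lem:pwconstantapproxstv} to the weight $\rho(t)=t^\alpha$, in exact parallel to the way \Cref{cor:HoelderApprox} follows from \Cref{lem:pwconstantapprox} on the continuous side. First I would verify that $\rho(t)=t^\alpha$, $\alpha\in(0,1)$, is admissible in the discrete framework: it is monotonically increasing on $[0,\infty]$, so the discrete seminorm $|f_N|_{t^\alpha,N}$ defined in \Cref{eq:dataLipNorm} makes sense, and since $X_N$ has strictly positive separation distance $q_{X_N}$, see \Cref{eq:SepDistance}, the quantity $|f_N|_{t^\alpha,N}=\max_{i\neq j}d_{i,j}^{-\alpha}\omega_N(Y_N,d_{i,j})$ is a finite maximum over pairs with $d_{i,j}\geq q_{X_N}>0$; hence $|f_N|_{t^\alpha,N}<\infty$ and $f_N$ is of class $\rho=t^\alpha$, so the hypotheses of \Cref{lem:pwconstantapproxstv} hold with this $\rho$.

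Second, applying \Cref{lem:pwconstantapproxstv} with the mesh size $h$ from the hypothesis, that is $\diam(X^{(i)})\leq h$ for $i=1,\ldots,M$, and inserting $\rho(h)=h^\alpha$ and $|f_N|_{\rho,N}=|f_N|_{t^\alpha,N}$ gives
\[
\sup_{\bfx\in\Xcal}d_\Ycal\big(f_N(\bfx),I_{\Xi_M}f_N(\bfx)\big)
\leq
h^\alpha\,|f_N|_{t^\alpha,N}.
\]
Third, I would translate the metric into the norm and restrict the supremum: since $(\Ycal,\|\cdot\|_\Ycal)$ is a normed vector space one has $d_\Ycal(\bfy,\bfy')=\|\bfy-\bfy'\|_\Ycal$, and as $X_N\subseteq\Xcal$,
\[
\|f_N-I_{\Xi_M}f_N\|_{C(X_N;\Ycal)}
=\sup_{\bfx\in X_N}\big\|f_N(\bfx)-I_{\Xi_M}f_N(\bfx)\big\|_\Ycal
\leq\sup_{\bfx\in\Xcal}d_\Ycal\big(f_N(\bfx),I_{\Xi_M}f_N(\bfx)\big),
\]
and combining this with the previous display and the abbreviation $|f_N|_{\operatorname{Lip}(\alpha),N}\isdef|f_N|_{t^\alpha,N}$ yields the asserted bound.

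I do not anticipate a genuine obstacle: the statement is a corollary in the literal sense. The only point deserving a line of checking is the admissibility of the weight $t^\alpha$ and the finiteness of the associated discrete seminorm, which --- unlike in the continuous setting, where $|f|_{t^\alpha}<\infty$ encodes genuine H\"older regularity of $f$ --- here follows automatically from $\alpha>0$ together with $q_{X_N}>0$. One could additionally observe that the same argument covers $\alpha=1$ verbatim; the restriction $\alpha\in(0,1)$ is inherited from \Cref{cor:HoelderApprox} merely to keep $\rho=t^\alpha$ a bona fide modulus of continuity on $\Xcal$, which is irrelevant for the finite set $X_N$.
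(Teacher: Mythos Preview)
Your proposal is correct and matches the paper's intent: the paper states this corollary without proof, merely saying ``As before, we have an analogous result for site-to-value maps, which we state for completeness,'' so the argument is exactly the specialization of \Cref{lem:pwconstantapproxstv} to $\rho(t)=t^\alpha$ that you spell out. Your additional remarks on the automatic finiteness of $|f_N|_{t^\alpha,N}$ due to $q_{X_N}>0$ and on the irrelevance of the restriction $\alpha<1$ in the discrete setting are accurate and indeed anticipate the observation the paper makes in the remark immediately following the corollary.
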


\begin{remark}
Obviously, for \(N<\infty\), the semi-norm 
\(|f_N|_{\operatorname{Lip}(\alpha),N}\) is always bounded. This implies that
any site-to-value map is H\"older continuous for any H\"older exponent
\(\alpha\leq 1\). Hence, to distinguish between different  H\"older
regularity, it is suggested in \cite{GNC10} to introduce an explicit constant
\(C>0\) and to define the discrete H\"older class of functions with
\(|f_N|_{\operatorname{Lip}(\alpha),N}\leq C\). This additional assumption
can be dropped, if the limit \(f_N\to f\) with
\(|f|_{\operatorname{Lip}(\alpha)}<\infty\)
exists, see \Cref{thm:seminormconv,thm:empiricalseminormconsistency}.
\end{remark}

\section{Multilevel Monte Carlo for discrete data}\label{sec:MLMC}
As a relevant application for the piecewise constant approximation of discrete
data, we consider the computation of first and second order statistics of
independent samples of identically distributed random vectors.
To this end, let $Y\colon\calX\times\Omega\to\calY$ be a random field
taking values in a real, separable Hilbert space $\big(\calY,(\cdot,\cdot)_\Ycal\big)$, where  
$(\Omega,\Fcal,\bbQ)$ is a complete probability space.
In addition, we assume that \(\Xcal\) is equipped with a
probability measure.

Sampling \(Y\) at certain locations \(X_N=\{\bfx_1,\ldots,\bfx_N\}\)
amounts to a sample of random vectors
\begin{equation}\label{eq:randVec}
\bs y(\omega)=[Y({\bs x},\omega)]_{{\bs x}\in X_N}=Y_N(\omega)\in\calY^N.
\end{equation}
Given samples \({\bs y}_1,\ldots,{\bs y}_Q\) of \(\bs y\) we aim at 
efficiently computing the 
\emph{sample mean}
\begin{equation}\label{eq:sampleMean}
\overline{\bs y}\isdef\frac 1 Q\sum_{k=1}^Q\bs y_k,
\end{equation}
the \emph{sample covariance}
\begin{equation}\label{eq:sampleCov}
  \overline{\bfC}^{\textnormal{(Cov)}}
  \isdef\frac{1}{Q-1}\sum_{k=1}^Q(\bfy_k-\overline\bfy)
(\bfy_k-\overline\bfy)^\intercal
=\frac{1}{Q-1}\bigg(\bfY\bfY^\intercal
-Q\overline\bfy\,\overline\bfy^\intercal\bigg),
\end{equation}
and the \emph{sample correlation}
\begin{equation}\label{eq:sampleCor}
  \overline{\bfC}^{\textnormal{(Cor)}}
  \isdef\frac{1}{Q}\sum_{k=1}^Q\bfy_k
\bfy_k^\intercal
=\frac{1}{Q}\bfY\bfY^\intercal,
\end{equation}
with the data matrix
\[
\bfY\isdef[\bfy_1,\ldots,\bfy_Q]\in\calY^{N\times Q}.
\]
Clearly, the sample covariance can be deduced from the sample mean and the
sample correlation, such that we restrict our discussion to the latter two
quantities.

Computing the sample mean \eqref{eq:sampleMean} naively is of cost
\(\Ocal(QN)\), while
forming the product \(\bfY\bfY^\intercal\) in \eqref{eq:sampleCor}
using the singular value decomposition of \(\bfY\) is of cost 
\(\min\{N^2Q,NQ^2\}\) and, hence, only efficient if either \(Q\ll N\)
or \(N\ll Q\). 
For \(Q\approx N\), \eqref{eq:sampleMean} scales quadratically
in the number of data sites, while \eqref{eq:sampleCor} scales
cubically in the number of data sites. To mitigate this cost, we assume
that $Y$ is contained pathwise in the Banach space
\[
C_\rho(\calX,\calY)=\{f\in C(\calX,\calY)\colon |f|_\rho<\infty\}
\]
which is equipped with the norm
\[
\|f\|_{C_\rho}(\calX,\calY)
=
\|f\|_{C(\calX,\calY)}+|f|_\rho.
\]
Similar estimates in terms of approximation and stability as in
\Cref{sec:hoelderinterpolation} can be stated. A suitable choice for $\rho$ is
the pointwise essential supremum over all pathwise moduli of continuity, that is,
\[
\rho(t)
=
\big\|\omega_N\big(Y_N(\cdot),t\big)\big\|_{L^\infty(\Omega)}.
\]

\subsection{Piecewise constant approximation of empirical statistics}
We assume that \(Y\) is contained in the 
\emph{Lebesgue-Bochner space} 
\(L^p\big(\Omega;C_\rho(\Xcal;\Ycal)\big)\), \(p\geq 1\). 
We recall that, given a Banach space \((\Ecal,\|\cdot\|_{\Ecal})\), 
the Lebesgue-Bochner space \(L^p(\Omega;\Ecal)\)
consists of all equivalence classes
of strongly \(\Qbb\)-measurable maps 
\(f\colon\Omega\to\Ecal\) with
finite norm
\[
\|f\|_{L^p(\Omega;\Ecal)}
\isdef\begin{cases}
\displaystyle{\bigg(\int_\Omega\|f\|_{\calE}^p\d\Qbb\bigg)^{\frac 1 p}},& 
p < \infty,\\
\displaystyle{\esssup_{\omega\in\Omega}\|f\|_{\calE}},& p =\infty.
\end{cases}
\]

In the following, we specifically assume that $Y\in L^2\big(\Omega;C_\rho(\calX;\calY)\big)$.
This particularly implies that $Y\in L^2\big(\Omega;L^2(\calX;\calY)\big)$.
Then, given an independent sample \(Y_1,\ldots, Y_Q\) of \(Y\), 
the sample mean
\[
E_Q[Y]\isdef\frac 1 Q\sum_{k=1}^Q Y_k
\]
satisfies
\begin{equation}\label{eq:MCestimate}
\big\|\Ebb[Y]-E_Q[Y]\big\|_{L^2(\Omega;L^2(\calX;\Ycal))}^2
\leq\frac{1}{Q}\|Y\|_{L^2(\Omega;L^2(\calX;\Ycal))}^2,
\end{equation}
see \cite{BSZ11} and also \cite[Chapter 9]{TL91}.
Furthermore, it is easy to see that
\begin{align*}
\|Y\|_{L^2(\Omega;L^2(\calX;\Ycal))}^2
&=\int_\Omega\|Y\|_{L^2(\calX;\Ycal)}^2\d\Qbb\\
&\leq\int_\Omega\|Y\|_{C(\calX;\Ycal)}^2\d\Qbb
=\|Y\|_{L^2(\Omega;C(\calX;\Ycal))}^2
\leq\|Y\|_{L^2(\Omega;C_\rho(\calX;\Ycal))}^2.
\end{align*}
Combining the previous estimates with \eqref{eq:MCestimate} and taking square 
roots yields
\begin{equation}
\big\|\Ebb[Y]-E_Q[Y]\big\|_{L^2(\Omega;L^2(\calX;\calY))}\leq\frac{1}{\sqrt{Q}}
\|Y\|_{L^2(\Omega;C_\rho(\calX;\calY))}.
\end{equation}

In view of Lemma~\ref{lem:pwconstantapprox}, given a partition
\(X^{(1)},\ldots,X^{(M)}\) with \(\diam(X^{(i)})\leq h\), we 
can further estimate
\begin{align*}
&\big\|\Ebb[Y]-E_Q[I_{\Xi_M} Y]\big\|_{L^2(\Omega;L^2(\calX;\Ycal))}\\
&\qquad\leq\big\|\Ebb[Y]-\Ebb[I_{\Xi_M}Y]\big\|_{L^2(\Omega;L^2(\calX;\Ycal))}
+\big\|\Ebb[I_{\Xi_M}Y]-E_Q[I_{\Xi_M} Y]\big\|_{L^2(\Omega;L^2(\calX;\Ycal))}\\
&\qquad\leq\|Y-I_{\Xi_M} Y\|_{L^2(\Omega;C(\calX;\Ycal))}
+\frac{1}{\sqrt{Q}}\|I_{\Xi_M}Y\|_{L^2(\Omega;L^2(\calX;\Ycal))}\\
&\qquad\leq\bigg(\rho(h)
+\frac{1}{\sqrt{Q}}\bigg)\|Y\|_{L^2(\Omega;C_\rho(\calX;\Ycal))}
\end{align*}
by invoking Bochner's inequality
and the stability estimate \eqref{eq:StabInterp}.

Moreover, since \(Y\in L^2\big(\Omega;C_\rho(\calX;\calY)\big)\), the
correlation 
\[
\Cor[Y]({\bs x},{\bs x}')=\int_\Omega Y({\bs x},\omega)Y({\bs x}',\omega)\d\Qbb
\]
exists and satisfies
\(\Cor[Y]\in C_\rho(\Xcal;\Ycal)\otimes C_\rho(\Xcal;\Ycal)\) for 
both, the injective and the projective tensor product, see
\cite[Chapter 3]{JK15}. Similarly to the case of the expectation, we find
\begin{align*}
&\big\|\Ebb[Y\otimes Y]-E_Q[I_{\Xi_M}Y\otimes I_{\Xi_M}Y]
\big\|_{L^2(\Omega;L^2(\Xcal;\Ycal)\otimes L^2(\Xcal;\Ycal))}\\
&\qquad\leq\|Y\otimes Y-I_{\Xi_M}Y
\otimes I_{\Xi_M}Y\|_{L^2(\Omega;C(\calX;\Ycal))}
+\frac{1}{\sqrt{Q}}\|I_{\Xi_M}Y
\otimes I_{\Xi_M}Y\|_{L^2(\Omega;L^2(\calX;\Ycal))}\\
&\qquad\leq\bigg(2\rho(h)
+\frac{1}{\sqrt{Q}}\bigg)
\|Y\|_{L^2(\Omega;C_\rho(\Xcal;\Ycal)\otimes C_\rho(\Xcal;\Ycal))}
\end{align*}
by standard tensor product arguments.

Fixing a set of data sites 
\(X_N=\{\bfx_1,\ldots,\bfx_N\}\subset\calX\), the 
corresponding random vector \({\bs y}(\omega)\), see 
\eqref{eq:randVec}, satisfies
\[
\bfy
\in
L^2\big(\Omega;C_\rho(X_N;\calY)\big)
\subset L^2\big(\Omega;L^2(X_N;\calY)\big).
\]
The previous derivation directly carries over to this case
and results in the error estimates
\[
\big\|\Ebb[{\bs y}]-E_Q[I_{\Xi_M}{\bs y}]\big\|_{L^2(\Omega;L^2(X_N;\Ycal))}
\leq\bigg(\rho(h)
+\frac{1}{\sqrt{Q}}\bigg)\|{\bs y}\|_{L^2(\Omega;C_\rho(X_N;\Ycal))}
\]
and
\begin{align*}
&\big\|\Ebb[{\bs y}{\bs y}^\intercal]
-E_Q[(I_{\Xi_M}{\bs y})(I_{\Xi_M}{\bs y})^\intercal]
\big\|_{L^2(\Omega;L^2(X_N;\Ycal)\otimes L^2(X_N;\Ycal))}\\
&\qquad\leq\bigg(2\rho(h)
+\frac{1}{\sqrt{Q}}\bigg)
\|{\bs y}\|_{L^2(\Omega;C_\rho(X_N;\Ycal)
\otimes C_\rho(X_N;\Ycal))},
\end{align*}
respectively. 

It is easy to see that the error is equilibrated if $\rho(h)\sim Q^{-1/2}$. Even
for the regular case of Lipschitz continuous functions, that is, $\rho(t)=t$,
and quasi-uniform spatial points in $\calX=\bbR^d$, that is, $h\sim N^{-1/d}$,
this implies $N\sim Q^{d/2}$ and results in a cost of $\calO(Q^{1+d/2})$ for the
computation of \(E_Q[I_{\Xi_M}{\bs y}]\) and in a cost of
\(\Ocal\big(Q^{1+d}\big)\) for the computation of
\(E_Q[(I_{\Xi_M}{\bs y})(I_{\Xi_M}{\bs y})^\intercal]\), which is intractable.
To significantly reduce this cost, we shall employ sparse tensor product
techniques, that is, the multilevel Monte Carlo method for the sample mean and
the multiindex Monte Carlo method for the correlation.

\subsection{Multilevel Monte Carlo method}
In this section, we propose a multilevel Monte Carlo approach,
see \cite{Gil15} and the references therein, for discrete data. 
Given a uniform binary tree \(\Tcal\) for the set \(X_N\subset\Xcal\), we
consider the partitions
\[
\Xi_j\isdef\big\{({\bs x}_\tau,\tau)\colon\tau\in\calT,\ell(\tau)=j\big\},
\quad\text{for }j=0,\ldots,J,
\]
where the interpolation points \({\bs x}_\tau\in\tau\) may be chosen
arbitrarily. A nested sequence of interpolation points can be obtained by the
choice \({\bs x}_\tau\in\{{\bs x}_{\tau'}:\tau'\in\children(\tau)\}\)
for \(\tau\in\Tcal\setminus\Lcal\).

We have the following lemma, which bounds the pointwise difference of two
consecutive interpolants.
The result directly carries over to site-to-value maps in 
\(C_\rho(X_N;\Ycal)\).

\begin{lemma}\label{lem:scaledec}
There holds for any \(f\in C_\rho(\Xcal;\Ycal)\) that
\begin{equation}\label{eq:varianceReduction}
\|I_{\Xi_j}f-I_{\Xi_{j-1}}f\|_{C(\Xcal;\Ycal)}
\leq
2\rho(\cdiam2^{-\cunif (j-1)})\|f\|_{C_\rho(\Xcal;\Ycal)},
\end{equation}
where the constants \(\cdiam,\cunif>0\) depend on the 
uniformity from Definition~\ref{def:uniformTree}.
\end{lemma}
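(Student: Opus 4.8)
The plan is to insert $f$ and use the triangle inequality, thereby reducing the claim to two applications of the piecewise constant interpolation bound of \Cref{lem:pwconstantapprox}, one at level $j$ and one at level $j-1$. No deeper machinery is needed: the nesting of the cluster tree only enters through the fact that the level-$\ell$ clusters are exactly the cells underlying $\Xi_\ell$, and the uniformity of the tree enters through the diameter bound of \Cref{def:uniformTree}.

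Concretely, since $\Ycal$ is a normed space I would first write
\[
\|I_{\Xi_j}f-I_{\Xi_{j-1}}f\|_{C(\Xcal;\Ycal)}
\leq
\|f-I_{\Xi_j}f\|_{C(\Xcal;\Ycal)}
+
\|f-I_{\Xi_{j-1}}f\|_{C(\Xcal;\Ycal)}.
\]
The hypothesis $f\in C_\rho(\Xcal;\Ycal)$ means $|f|_\rho<\infty$, hence $\omega(f,t)\leq\rho(t)|f|_\rho$ for all $t>0$, so the assumptions of \Cref{lem:pwconstantapprox} are met. For $\ell\in\{j-1,j\}$ the cells entering $\Xi_\ell$ are precisely the clusters $\{\tau\in\Tcal:\ell(\tau)=\ell\}$, which have mesh size $h_\ell\isdef\max_{\ell(\tau)=\ell}\diam(\tau)$, so \Cref{lem:pwconstantapprox} yields
\[
\|f-I_{\Xi_\ell}f\|_{C(\Xcal;\Ycal)}
\leq
\rho(h_\ell)\,|f|_\rho,
\qquad
\ell\in\{j-1,j\}.
\]

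It then remains to convert the mesh sizes into powers of two. Since $\Tcal$ is a uniform $n$-d-tree, the uniformity bound of \Cref{def:uniformTree} gives $\diam(\tau)\leq\cdiam 2^{-\cunif\ell(\tau)}$ for every $\tau\in\Tcal$, so that
\[
h_j\leq\cdiam 2^{-\cunif j}\leq\cdiam 2^{-\cunif(j-1)},
\qquad
h_{j-1}\leq\cdiam 2^{-\cunif(j-1)},
\]
where the first inequality uses $\cunif>0$. Because $\rho$ is monotonically increasing, both $\rho(h_j)$ and $\rho(h_{j-1})$ are bounded by $\rho(\cdiam 2^{-\cunif(j-1)})$. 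Adding the two estimates and using $|f|_\rho\leq\|f\|_{C_\rho(\Xcal;\Ycal)}$ produces
\[
\|I_{\Xi_j}f-I_{\Xi_{j-1}}f\|_{C(\Xcal;\Ycal)}
\leq
2\,\rho\big(\cdiam 2^{-\cunif(j-1)}\big)\,\|f\|_{C_\rho(\Xcal;\Ycal)},
\]
which is \Cref{eq:varianceReduction}. Replacing $\Xcal$ by $X_N$, $\omega(f,\cdot)$ by $\omega_N(f,\cdot)$, $|f|_\rho$ by $|f_N|_{\rho,N}$, and invoking \Cref{lem:pwconstantapproxstv} in place of \Cref{lem:pwconstantapprox} gives the statement verbatim for site-to-value maps in $C_\rho(X_N;\Ycal)$.

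I do not expect a genuine obstacle here; the only points needing care are bookkeeping: that the level-$\ell$ clusters really are the cells appearing in $\Xi_\ell$ (true by construction, after the Voronoi-type extension of the remark following \Cref{lem:pwconstantapprox} when $\Xcal\supsetneq X_N$), and that \Cref{def:uniformTree} is applied at the two levels $j-1$ and $j$. One could even drop the factor $2$ by comparing the interpolants pointwise: for $\bfx\in\Xcal$ in the nested pair of cells $\tau\subset\tau'$ with $\ell(\tau)=j$, $\ell(\tau')=j-1$, both interpolation points $\bfx_\tau,\bfx_{\tau'}$ lie in $\tau'$, whence $d_\Xcal(\bfx_\tau,\bfx_{\tau'})\leq\diam(\tau')\leq\cdiam 2^{-\cunif(j-1)}$ and $d_\Ycal\big(f(\bfx_\tau),f(\bfx_{\tau'})\big)\leq\omega\big(f,\cdiam 2^{-\cunif(j-1)}\big)\leq\rho\big(\cdiam 2^{-\cunif(j-1)}\big)|f|_\rho$. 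The coarser constant in \Cref{eq:varianceReduction} is, however, entirely sufficient for the multilevel estimates that follow.
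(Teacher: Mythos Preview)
Your proof is correct and follows exactly the paper's argument: insert $f$, apply the triangle inequality, use \Cref{lem:pwconstantapprox} at levels $j$ and $j-1$ with the diameter bound $\diam(\tau)\leq\cdiam 2^{-\cunif\ell(\tau)}$ from \Cref{def:uniformTree}, and then dominate both terms by the coarser level via monotonicity of $\rho$. Your closing remark that the factor $2$ can be removed by comparing the two interpolation points directly inside the level-$(j-1)$ cell is a valid sharpening not present in the paper.
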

\begin{proof}
By the triangle inequality, we have
\[
\begin{aligned}
\|I_{\Xi_j}f-I_{\Xi_{j-1}}f\|_{C(\Xcal;\Ycal)}
&\leq
\|I_{\Xi_j}f-f\|_{C(\Xcal;\Ycal)}
+\|I_{\Xi_{j-1}}f-f\|_{C(\Xcal;\Ycal)}\\
&\leq
\Big(\rho(\cdiam2^{-\cunif j})
+\rho(\cdiam2^{-\cunif (j-1)})\Big)\|f\|_{C\rho(\Xcal;\Ycal)}\\
&\leq
2\rho(\cdiam2^{-\cunif (j-1)})
\|f\|_{C_\rho(\Xcal;\Ycal)}
\end{aligned}
\]
due to \(\diam(\tau)\leq\cdiam 2^{-\cunif j}\) and using
\Cref{lem:pwconstantapprox}. 
\end{proof}

The decay across scales, shown in the previous lemma, corresponds to a 
level-wise variance reduction in case of random fields. This
stipulates the idea of the multilevel Monte Carlo estimator.
Given a sequence \(Q_0\geq\ldots\geq Q_J\) of natural numbers, 
the multilevel Monte Carlo estimator reads
\[
E^{\textrm{ML}}[Y]\isdef\sum_{j=0}^J E_{Q_{J-j}}[(I_{\Xi_j}-I_{\Xi_{j-1}})Y],
\quad\text{where }I_{\Xi_{-1}}Y\equiv 0.
\]

The following theorem is a well established result on the multilevel Monte Carlo
estimator. We provide its proof here adapted to the present context
for the readers convenience.

\begin{theorem}\label{thm:MLMC}
Let \(Y\in L^2\big(\Omega;C_\rho(\Xcal;\Ycal)\big)\). Then there holds
\[
\big\|\Ebb[Y]-E^{\textnormal{ML}}[Y]\big\|_{L^2(\Omega;L^2(\Xcal;\Ycal))}
\leq
\sigma_{\textnormal{ML}}(J,\rho,\{Q_j\}_j)\|Y\|_{L^2(\Omega;C_\rho(\Xcal;\Ycal))}
\]
with the convergence factor
\begin{equation}\label{eq:ConvFactorML}
\sigma_{\textnormal{ML}}(J,\rho,\{Q_j\}_j)
\isdef
\rho(\cdiam2^{-\cunif J})+2\sum_{j=0}^J
\frac{\rho(\cdiam2^{-\cunif (j-1)})}{\sqrt{Q_{J-j}}}.
\end{equation}
\end{theorem}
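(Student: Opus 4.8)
The plan is to decompose the error of the multilevel estimator into a deterministic \emph{bias} contribution and a \emph{statistical} contribution, estimate the bias with the pointwise interpolation bound of \Cref{lem:pwconstantapprox} and the statistical part level by level with the single-level Monte Carlo estimate \eqref{eq:MCestimate} and the scale-decay bound of \Cref{lem:scaledec}. First I would record the telescoping identity $I_{\Xi_J}f=\sum_{j=0}^J(I_{\Xi_j}-I_{\Xi_{j-1}})f$ (with $I_{\Xi_{-1}}\equiv 0$), which holds pointwise and, applied to $Y$ pathwise, gives $\Ebb[I_{\Xi_J}Y]=\sum_{j=0}^J\Ebb[(I_{\Xi_j}-I_{\Xi_{j-1}})Y]$. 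Adding and subtracting $\Ebb[I_{\Xi_J}Y]$ then yields
\[
\Ebb[Y]-E^{\textnormal{ML}}[Y]
=
\Ebb\big[Y-I_{\Xi_J}Y\big]
+\sum_{j=0}^J\Big(\Ebb\big[(I_{\Xi_j}-I_{\Xi_{j-1}})Y\big]
-E_{Q_{J-j}}\big[(I_{\Xi_j}-I_{\Xi_{j-1}})Y\big]\Big).
\]
Taking the $L^2(\Omega;L^2(\Xcal;\Ycal))$-norm and applying the triangle inequality reduces the claim to bounding the bias term and each of the $J+1$ statistical terms; since only the triangle inequality is used, no independence of samples \emph{across} levels is needed.

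For the bias term, $\Ebb[Y-I_{\Xi_J}Y]$ is deterministic, so its $L^2(\Omega;L^2(\Xcal;\Ycal))$-norm coincides with its $L^2(\Xcal;\Ycal)$-norm; Bochner's inequality together with the Cauchy--Schwarz inequality and the embedding $C(\Xcal;\Ycal)\hookrightarrow L^2(\Xcal;\Ycal)$ (valid since $\Xcal$ carries a finite measure, the embedding constant being absorbed as throughout \Cref{sec:MLMC}) bounds it by $\|Y-I_{\Xi_J}Y\|_{L^2(\Omega;C(\Xcal;\Ycal))}$. Since the clusters at level $J$ of the uniform binary tree partition the index set with $\diam(\tau)\le\cdiam2^{-\cunif J}$, \Cref{lem:pwconstantapprox} applied pathwise with mesh size $h=\cdiam2^{-\cunif J}$ gives the bound $\rho(\cdiam2^{-\cunif J})\|Y\|_{L^2(\Omega;C_\rho(\Xcal;\Ycal))}$, which is exactly the first summand of $\sigma_{\textnormal{ML}}$.

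For the $j$-th statistical term, $E_{Q_{J-j}}$ is a single-level Monte Carlo mean of $Q_{J-j}$ independent copies of the increment $(I_{\Xi_j}-I_{\Xi_{j-1}})Y$, which lies in $L^2(\Omega;L^2(\Xcal;\Ycal))$ by \Cref{lem:scaledec} and the assumption $Y\in L^2(\Omega;C_\rho(\Xcal;\Ycal))$; hence \eqref{eq:MCestimate} yields the bound $Q_{J-j}^{-1/2}\|(I_{\Xi_j}-I_{\Xi_{j-1}})Y\|_{L^2(\Omega;L^2(\Xcal;\Ycal))}$. For $j\ge 1$ I would estimate the increment pathwise by \Cref{lem:scaledec}, producing the factor $2\rho(\cdiam2^{-\cunif(j-1)})\|Y\|_{L^2(\Omega;C_\rho(\Xcal;\Ycal))}$; for $j=0$, where $I_{\Xi_{-1}}Y\equiv 0$, the stability estimate \eqref{eq:StabInterp} gives $\|I_{\Xi_0}Y\|_{L^2(\Omega;C(\Xcal;\Ycal))}\le\|Y\|_{L^2(\Omega;C_\rho(\Xcal;\Ycal))}$, matching the $j=0$ summand of $\sigma_{\textnormal{ML}}$ up to the constant in front. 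Summing the $J+1$ statistical contributions and the bias term then reproduces the convergence factor \eqref{eq:ConvFactorML}.

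Most of the argument is bookkeeping once \Cref{lem:pwconstantapprox,lem:scaledec} and the single-level bound \eqref{eq:MCestimate} are in hand. The two points requiring care are: (i) correctly identifying the mesh size of the level-$j$ partition as $\cdiam2^{-\cunif j}$ and hence the exponent $j-1$ in \eqref{eq:ConvFactorML}, which arises because \Cref{lem:scaledec} controls $I_{\Xi_j}-I_{\Xi_{j-1}}$ through its \emph{coarser} scale $j-1$; and (ii) the $j=0$ boundary increment, which must be treated separately via interpolation stability \eqref{eq:StabInterp} rather than the scale-decay lemma.
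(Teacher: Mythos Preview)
Your proposal is correct and follows essentially the same route as the paper: split into bias plus level-wise statistical error via the telescoping identity, bound the bias with \Cref{lem:pwconstantapprox} and Bochner's inequality, and bound each statistical term by the single-level Monte Carlo estimate \eqref{eq:MCestimate} combined with the scale-decay bound \eqref{eq:varianceReduction}. The only cosmetic difference is that the paper applies \Cref{lem:scaledec} uniformly over $j=0,\ldots,J$ (tacitly using $I_{\Xi_{-1}}\equiv 0$) rather than singling out the $j=0$ term via the stability estimate \eqref{eq:StabInterp}; your explicit treatment of this boundary case is arguably more transparent.
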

\begin{proof}
By the triangle inequality, we obtain
\begin{align*}
&\big\|\Ebb[Y]-E^{\textrm{ML}}[Y]\big\|_{L^2(\Omega;L^2(\Xcal;\Ycal))}\\
&\qquad\leq
\big\|\Ebb[Y]-\Ebb[I_{\Xi_J}Y]\big\|_{L^2(\Omega;L^2(\Xcal;\Ycal))}+
\big\|\Ebb[I_{\Xi_J}Y]-E^{\textrm{ML}}[Y]\big\|_{L^2(\Omega;L^2(\Xcal;\Ycal))}.
\end{align*}
The first term on the left-hand side can be estimated by
Lemma~\ref{lem:pwconstantapprox} and Bochner's inequality. For the second term,
we obtain by the linearity of expectation, the fact that
\(I_{\Xi_J}Y=\sum_{j=0}^J(I_{\Xi_j}-I_{\Xi_{j-1}})Y\)
and the triangle inequality that
\begin{align*}
&\big\|\Ebb[I_{\Xi_J}Y]
-E^{\textrm{ML}}[Y]]\big\|_{L^2(\Omega;L^2(\Xcal;\Ycal))}\\
&\qquad=
\bigg\|\sum_{j=0}^J\Ebb[(I_{\Xi_j}-I_{\Xi_{j-1}})Y]
-E_{Q_{J-j}}[(I_{\Xi_j}-I_{\Xi_{j-1}})Y]\bigg\|_{L^2(\Omega;L^2(\Xcal;\Ycal))}\\
&\qquad\leq\sum_{j=0}^J
\big\|\Ebb[(I_{\Xi_j}-I_{\Xi_{j-1}})Y]
-E_{Q_{J-j}}[(I_{\Xi_j}-I_{\Xi_{j-1}})Y]\big\|_{L^2(\Omega;L^2(\Xcal;\Ycal))}.
\end{align*}
Employing \Cref{eq:MCestimate,eq:varianceReduction}, we obtain
for the terms within the sum that
\begin{align*}
&\big\|\Ebb[(I_{\Xi_j}-I_{\Xi_{j-1}})Y]
-E_{Q_{J-j}}[(I_{\Xi_j}-I_{\Xi_{j-1}})Y]\big\|_{L^2(\Omega;L^2(\Xcal;\Ycal))}\\
&\qquad\leq\frac{1}{\sqrt{Q_{J-j}}}
\|I_{\Xi_j}Y-I_{\Xi_{j-1}}Y\|_{L^2(\Omega;L^2(\Xcal;\Ycal))}
\leq 2\frac{\rho(\cdiam2^{-\cunif (j-1)})}{\sqrt{Q_{J-j}}}
\|Y\|_{L^2(\Omega;C_\rho(\Xcal;\Ycal))}.
\end{align*}
Combining the preceding estimates, we finally arrive at the error bound
\[
\big\|\Ebb[I_{\Xi_J}Y]-E^{\textrm{ML}}[Y]]\big\|_{L^2(\Omega;L^2(D))}
\leq
\bigg(
\rho(\cdiam2^{-\cunif J})
+2\sum_{j=0}^J\frac{\rho(\cdiam2^{-\cunif (j-1)})}{\sqrt{Q_{J-j}}}\bigg)
\|Y\|_{L^2(\Omega;C_\rho(\Xcal;\Ycal))}.
\]
\end{proof}

In principle, a similar estimator with similar error bound could
also be stated for the correlation. However, the computational
cost of the arising estimator will be dominated by the quadratic
cost of assembling the samples $I_{\Xi_k}Y\otimes I_{\Xi_k}Y$.
Instead, we observe that applying \eqref{eq:varianceReduction}
to two different levels of a dyad
\(f\otimes f\in C_\rho(\Xcal;\Ycal)\otimes C_\rho(\Xcal;\Ycal)\)
results in the bound
\begin{equation}\label{eq:TPestimate}
\begin{aligned}
&\big\|(I_{\Xi_j}-I_{\Xi_{j-1}})f\otimes (I_{\Xi_j'}-I_{\Xi_{j'-1}})f
\big\|_{C(\Xcal;\Ycal)\otimes C(\Xcal;\Ycal)}\\
&\qquad\qquad\qquad\leq
4\rho(\cdiam2^{-\cunif (j-1)})\rho(\cdiam2^{-\cunif (j'-1)})
\|f\otimes f
\|_{C_\rho(\Xcal;\Ycal)\otimes C_\rho(\Xcal;\Ycal)}.
\end{aligned}
\end{equation}

This suggests the multiindex or sparse tensor product estimator
\[
E^{\textrm{MI}}[Y\otimes Y]\isdef\sum_{j=0}^J\sum_{k+k'=j}E_{Q_{J-j}}[
(I_{\Xi_k}-I_{\Xi_{k-1}})Y\otimes (I_{\Xi_{k'}}-I_{\Xi_{k'-1}})Y]
\]
for the sample correlation. The important difference to the
multilevel estimator is that the tensor products are formed
between interpolants at different levels, which reduces the computational
footprint. In analogy to \Cref{thm:MLMC}, we obtain the following result.

\begin{theorem}\label{thm:MLMCCor}
Let \(Y\in L^2\big(\Omega;C_\rho(\Xcal;\Ycal)\big)\). Then, there holds
\begin{equation*}
\begin{aligned}
\big\|\Ebb[Y\otimes Y]-E^{\textnormal{MI}}[Y\otimes Y]
\big\|_{L^2(\Omega;L^2(\Xcal;\Ycal)\otimes L^2(\Xcal;\Ycal))}
\leq
\sigma_{\textnormal{MI}}(J,\rho,\{Q_j\}_j)\|Y\otimes Y
\|_{L^2(\Omega;C_\rho(\Xcal;\Ycal)\otimes C_\rho(\Xcal;\Ycal))}
\end{aligned}
\end{equation*}
with the convergence factor
\begin{equation}\label{eq:ConvFactorMI}
\sigma_{\textnormal{MI}}(J,\rho,\{Q_j\}_j)\isdef
2\rho(\cdiam2^{-\cunif J})+
4\sum_{j=0}^J\sum_{k+k'=j}\frac{\rho(\cdiam2^{-\cunif (k-1)})
\rho(\cdiam2^{-\cunif (k'-1)})}{\sqrt{Q_{J-j}}}.
\end{equation}
\end{theorem}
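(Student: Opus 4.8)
The plan is to transcribe the proof of \Cref{thm:MLMC} to the tensorized setting, replacing the one-level surplus bound \eqref{eq:varianceReduction} by the mixed surplus bound \eqref{eq:TPestimate}. Write $\Delta_kY\isdef(I_{\Xi_k}-I_{\Xi_{k-1}})Y$ with $I_{\Xi_{-1}}Y\equiv 0$, so that $I_{\Xi_J}Y=\sum_{k=0}^J\Delta_kY$, set $R_j\isdef Y-I_{\Xi_j}Y$, and denote by
\[
\widehat I_J(Y\otimes Y)\isdef\sum_{j=0}^J\sum_{k+k'=j}\Delta_kY\otimes\Delta_{k'}Y=\sum_{k+k'\leq J}\Delta_kY\otimes\Delta_{k'}Y
\]
the sparse tensor interpolant. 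Since the plain Monte Carlo estimators $E_Q$ are unbiased, $\Ebb\big[E^{\textnormal{MI}}[Y\otimes Y]\big]=\Ebb\big[\widehat I_J(Y\otimes Y)\big]$, so a first use of the triangle inequality in $L^2\big(\Omega;L^2(\Xcal;\Ycal)\otimes L^2(\Xcal;\Ycal)\big)$ splits the error into the deterministic \emph{bias} $\big\|\Ebb[Y\otimes Y]-\Ebb[\widehat I_J(Y\otimes Y)]\big\|$ and the \emph{statistical error} $\big\|\Ebb[\widehat I_J(Y\otimes Y)]-E^{\textnormal{MI}}[Y\otimes Y]\big\|$.

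For the statistical error I would follow the proof of \Cref{thm:MLMC} almost verbatim: apply the triangle inequality over the total level $j$, so that the $j$-th contribution is the Monte Carlo error of the block $\sum_{k+k'=j}\Delta_kY\otimes\Delta_{k'}Y$ estimated with $Q_{J-j}$ samples; bound it by \eqref{eq:MCestimate}, pass from $L^2(\Omega;L^2\otimes L^2)$ to $L^2(\Omega;C\otimes C)$ by Bochner's inequality, split over $k+k'=j$ by the triangle inequality once more, and insert the pathwise estimate \eqref{eq:TPestimate} together with $\|Y\otimes Y\|_{C_\rho\otimes C_\rho}=\|Y\|_{C_\rho}^2$. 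This reproduces exactly the double sum $4\sum_{j=0}^J\sum_{k+k'=j}\rho(\cdiam2^{-\cunif(k-1)})\rho(\cdiam2^{-\cunif(k'-1)})/\sqrt{Q_{J-j}}$ in $\sigma_{\textnormal{MI}}$, see \eqref{eq:ConvFactorMI}.

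For the bias I would group the full tensor expansion $Y\otimes Y=\sum_{k,k'\geq0}\Delta_kY\otimes\Delta_{k'}Y$ along the diagonal strip $\{k+k'>J\}$, using $\sum_{k>J}\Delta_kY=R_J$ and $\sum_{k'>J-k}\Delta_{k'}Y=R_{J-k}$, to obtain the low-complexity identity
\[
Y\otimes Y-\widehat I_J(Y\otimes Y)=R_J\otimes Y+I_{\Xi_0}Y\otimes R_J+\sum_{k=0}^{J-1}\big(I_{\Xi_{J-k}}-I_{\Xi_{J-k-1}}\big)Y\otimes R_k.
\]
Each summand is a pure tensor, so its $C\otimes C$-norm is the product of the two factor norms; these are controlled pathwise by \Cref{lem:pwconstantapprox} (giving $\|R_j\|_{C}\leq\rho(\cdiam2^{-\cunif j})\|Y\|_{C_\rho}$ via $\diam(\tau)\leq\cdiam2^{-\cunif j}$ on level $j$), by the surplus bound \eqref{eq:varianceReduction}, and by the stability estimate \eqref{eq:StabInterp} (giving $\|I_{\Xi_0}Y\|_C\leq\|Y\|_C$). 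Passing to $L^2(\Omega;L^2\otimes L^2)$ via Bochner's inequality and collecting the resulting products of one-dimensional factors $\rho(\cdiam2^{-\cunif\,\cdot})$ over the strip then yields the remaining term $2\rho(\cdiam2^{-\cunif J})$ of \eqref{eq:ConvFactorMI}; combined with the statistical bound this proves the assertion.

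The step I expect to be the main obstacle is this last collection in the bias estimate: one must first find the correct telescoping of the mixed hierarchical surpluses and then verify that the accumulated products of one-dimensional approximation and surplus bounds over the sparse index set do not exceed the order recorded in the first term of \eqref{eq:ConvFactorMI}. This is the only place where the argument genuinely departs from a formal tensorization of the proof of \Cref{thm:MLMC}; the statistical part, by contrast, is a routine transcription built on \eqref{eq:MCestimate}, Bochner's inequality, and \eqref{eq:TPestimate}.
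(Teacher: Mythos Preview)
Your overall architecture is exactly the analogy the paper invokes: split into bias and statistical error, handle the statistical part by transcribing the proof of \Cref{thm:MLMC} with \eqref{eq:TPestimate} in place of \eqref{eq:varianceReduction}, and treat the bias via a telescoping of the sparse tensor interpolant. Your statistical estimate and your telescoping identity
\[
Y\otimes Y-\widehat I_J(Y\otimes Y)=R_J\otimes Y+\sum_{m=0}^{J}\Delta_mY\otimes R_{J-m}
\]
are both correct.

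The gap is precisely where you suspected it. Bounding the summands by \Cref{lem:pwconstantapprox}, \eqref{eq:varianceReduction} and \eqref{eq:StabInterp} gives, pathwise,
\[
\|Y\otimes Y-\widehat I_J(Y\otimes Y)\|_{C\otimes C}
\le\Big(2\rho(\cdiam 2^{-\cunif J})+2\sum_{m=1}^{J}\rho(\cdiam 2^{-\cunif(m-1)})\rho(\cdiam 2^{-\cunif(J-m)})\Big)\|Y\|_{C_\rho}^2,
\]
and for general $\rho$ the extra sum does \emph{not} collapse into the first term. Already for $\rho(t)=t^\alpha$ every summand equals $\cdiam^{2\alpha}2^{-\cunif\alpha(J-1)}$, so the sum is of order $J\,\rho(\cdiam 2^{-\cunif J})$, not $\rho(\cdiam 2^{-\cunif J})$; this is the familiar logarithmic loss of sparse tensor interpolation. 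Hence the ``collection'' step you flag as the main obstacle cannot be carried out as stated, and the bias contribution you obtain is genuinely larger than the $2\rho(\cdiam 2^{-\cunif J})$ recorded in \eqref{eq:ConvFactorMI}. Either an additional structural hypothesis on $\rho$ (or an absorption of these terms into the $j=J$ summand of the statistical part under a constraint on $Q_0$) is needed, or the bias term in the convergence factor has to carry the extra sum; your argument does not close the estimate as written.
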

Replacing \(Y\) in Theorems \ref{thm:MLMC} and \ref{thm:MLMCCor} by its discrete
version \({\bs y}\) immediately yields the following corollary.
\begin{corollary}\label{cor:datamlmc}
	Let \({\bs y}\in L^2\big(\Omega;C_\rho(X_N;\calY)\big)\).
	Then, there holds
	\[
	\big\|\Ebb[{\bs y}]-E^{\textnormal{ML}}[{\bs y}]]
	\big\|_{L^2(\Omega;L^2(X_N;\Ycal))}
	\leq
	\sigma_{\textnormal{ML}}(J,\rho,\{Q_j\}_j)
	\|{\bs y}\|_{L^2(\Omega;C_\rho(X_N;\Ycal))}
	\]
	and
	\begin{equation*}
		\begin{aligned}
			\big\|\Ebb[{\bs y}{\bs y}^\intercal]
      -E^{\textnormal{MI}}[{\bs y}{\bs y}^\intercal]
			\big\|_{L^2(\Omega;L^2(X_N;\Ycal)\otimes L^2(X_N;\Ycal))}
			\leq \sigma_{\textnormal{MI}}(J,\rho,\{Q_j\}_j)
			\|{\bs y}{\bs y}^\intercal
      \|_{L^2(\Omega;C_\rho(X_N;\Ycal)\otimes C_\rho(X_N;\Ycal))}.
		\end{aligned}
	\end{equation*}
	respectively.
\end{corollary}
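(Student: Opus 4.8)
The plan is to observe that every ingredient entering the proofs of \Cref{thm:MLMC,thm:MLMCCor} remains valid verbatim when the compact metric space $\calX$ is taken to be the finite set $X_N$ itself, equipped with the metric $d_\calX$ restricted to $X_N$. First I would note that $(X_N,d_\calX)$ is trivially a compact metric space, so all structural prerequisites used in \Cref{sec:MLMC}---the uniform binary tree $\calT$ for $X_N$ (here literally the tree already fixed for $X_N$), the induced level partitions $\Xi_j=\{({\bs x}_\tau,\tau)\colon\tau\in\calT,\ \ell(\tau)=j\}$, and the piecewise constant interpolation operators $I_{\Xi_j}$---are available as before. The remark following \eqref{eq:discretemodulusdata} records that on $\calX=X_N$ the continuous and discrete moduli of continuity coincide, so that $C_\rho(X_N;\calY)$ and the norm $\|\cdot\|_{C_\rho(X_N;\calY)}$ are well defined and consistent with the discrete seminorm $|\cdot|_{\rho,N}$.

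Next I would invoke the discrete analogues of the two lemmas underlying the theorems. \Cref{lem:pwconstantapproxstv} is precisely \Cref{lem:pwconstantapprox} on $X_N$, and the stability estimate \eqref{eq:StabInterp} holds with $\calX$ replaced by $X_N$ since $I_{\Xi_M}$ only reassigns already present values. \Cref{lem:scaledec} likewise carries over---its proof uses nothing beyond the triangle inequality, the diameter bound $\diam(\tau)\leq\cdiam 2^{-\cunif j}$ from the uniformity of $\calT$, and \Cref{lem:pwconstantapprox}---so that $\|I_{\Xi_j}{\bs y}-I_{\Xi_{j-1}}{\bs y}\|_{C(X_N;\calY)}\leq 2\rho(\cdiam 2^{-\cunif(j-1)})\|{\bs y}\|_{C_\rho(X_N;\calY)}$. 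Finally, the single-level Monte Carlo bound \eqref{eq:MCestimate} is a statement about Lebesgue--Bochner spaces over the sample space $\Omega$ that is agnostic to the spatial domain, hence applies with $L^2(X_N;\calY)$ in place of $L^2(\calX;\calY)$. With these ingredients the proof of the mean estimate is a line-by-line repetition of the argument for \Cref{thm:MLMC}, replacing $Y$ by ${\bs y}$ and $\calX$ by $X_N$ throughout: the telescoping identity $I_{\Xi_J}{\bs y}=\sum_{j=0}^J(I_{\Xi_j}-I_{\Xi_{j-1}}){\bs y}$, the triangle inequality, \eqref{eq:MCestimate}, and the discrete version of \eqref{eq:varianceReduction} combine exactly as before to yield the convergence factor $\sigma_{\textnormal{ML}}$ from \eqref{eq:ConvFactorML}.

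The only point that warrants a word of care---and the closest thing to an obstacle---is the tensor-product bookkeeping for the correlation: one must check that the dyadic estimate \eqref{eq:TPestimate} holds for ${\bs y}\otimes{\bs y}$ with ${\bs y}\in L^2(\Omega;C_\rho(X_N;\calY))$. This again follows by applying the discrete \eqref{eq:varianceReduction} in each tensor factor and using the crossnorm property of the chosen (injective or projective) tensor norm, exactly as in the continuous case; the multiindex telescoping $I_{\Xi_J}{\bs y}\otimes I_{\Xi_J}{\bs y}=\sum_{j=0}^J\sum_{k+k'=j}(I_{\Xi_k}-I_{\Xi_{k-1}}){\bs y}\otimes(I_{\Xi_{k'}}-I_{\Xi_{k'-1}}){\bs y}$, together with \eqref{eq:MCestimate} on the product domain, then reproduces the argument of \Cref{thm:MLMCCor} and delivers $\sigma_{\textnormal{MI}}$ from \eqref{eq:ConvFactorMI}. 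Both estimates of the corollary follow.
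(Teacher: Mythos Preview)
Your proposal is correct and follows exactly the paper's own approach: the paper simply states that replacing $Y$ by its discrete version ${\bs y}$ in \Cref{thm:MLMC,thm:MLMCCor} immediately yields the corollary, and your write-up spells out in detail why this replacement is legitimate (compactness of $X_N$, the discrete interpolation estimate \Cref{lem:pwconstantapproxstv}, the domain-agnostic nature of \eqref{eq:MCestimate}, and the tensor-product bound). The level of care you take is more than the paper demands, but the underlying argument is identical.
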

Assuming additional structure on $\rho$ allows for more concrete error estimates
and for bounds of the computational cost of the estimators.

\subsection{The multilevel Monte Carlo method for H\"older continuous functions}
Our initial observation is that for H\"older continuous functions with
H\"older exponent $\alpha\in (0,1]$, the error estimate \Cref{thm:MLMC}
coincides to what is known from literature. In particular, the convergence
factor \eqref{eq:ConvFactorML} for the multilevel Monte Carlo simplifies towards
\[
\sigma_{\textnormal{ML}}(J,t^\alpha,\{Q_j\}_j)
=\cdiam^\alpha\bigg(2^{-\alpha\cunif J}+
2^{\alpha\cunif+1}\sum_{j=0}^J\frac{2^{-\alpha\cunif j}}{\sqrt{Q_{J-j}}}\bigg).
\]

Based on the cost analysis from \cite{Gil15}, in order to achieve
an overall root mean square error of \(1/\sqrt{Q}\), there holds for the cost
\(C_{\textnormal{MLMC}}\)
of the multilevel Monte Carlo estimator that
\begin{equation*}
	C_{\textnormal{MLMC}}=\begin{cases} \Ocal(Q),&\text{if } \cunif\alpha> 1,\\
		\Ocal\big(Q\log^2 Q\big),&\text{if }\cunif\alpha = 1,\\
		\Ocal\big(Q^{\frac 1 2+\frac{1}{2\cunif\alpha}}\big),
   &\text{if }\cunif\alpha < 1.
	\end{cases}
\end{equation*}

For the convergence factor \eqref{eq:ConvFactorMI} of the multiindex estimator,
we first note that \(|\{k+k'=j:k,k'\in\Nbb\}|=j+1\) is the number of weak
integer compositions of \(j\). Therefore, the convergence factor of the
multiindex estimator simplifies towards
\[
\sigma_{\textnormal{MI}}(J,t^\alpha,\{Q_j\}_j)=
2^{-\alpha\cunif J+1}\cdiam^\alpha
+4^{\alpha\cunif+1}\cdiam^{2\alpha}\sum_{j=0}^J
\frac{(j+1)2^{-\alpha\cunif j}}{\sqrt{Q_{J-j}}}.
\]

\begin{remark}
Instead of using the cost bound for the multiindex estimator from \cite{Gil15}, 
we may in our particular setting just use a similar cost bound as for the
multilevel estimator by noticing that there
exists for every \(\varepsilon>0\) a \(j_0\) such that 
\((j+1)\leq 2^{\varepsilon j}\) for all \(j\geq j_0\). In order to achieve
an overall root mean error of \(1/\sqrt{Q}\), we obtain the cost bound 
\begin{equation*}
C_{\textnormal{MIMC}}=\begin{cases} \Ocal(Q),&\text{if }
  \cunif\alpha-\varepsilon> 1,\\
\Ocal\big(Q\log^2 Q\big),&\text{if }\cunif\alpha-\varepsilon = 1,\\
\Ocal\big(Q^{\frac 1 2+\frac{1}{2(\cunif\alpha-\varepsilon)}}\big),
                         &\text{if }\cunif\alpha-\varepsilon < 1.
\end{cases}
\end{equation*}
\end{remark}

Replacing \(Y\) by its discrete version \({\bs y}\) and employing the 
representation of the convergence factors for the H\"older case
immediately yields the following variant of \Cref{cor:datamlmc}.
\begin{corollary}\label{cor:covarianceapprox}
Let \({\bs y}\in L^2\big(\Omega;C^{0,\alpha}(X_N;\calY)\big)\).
Then, there exists a constant \(c_{\textnormal{ML}}>0\) such that
\[
  \big\|\Ebb[{\bs y}]-E^{\textnormal{ML}}[{\bs y}]]
  \big\|_{L^2(\Omega;L^2(X_N;\Ycal))}
\leq c_{\textnormal{ML}}
\bigg(\sum_{j=0}^J\frac{2^{-\cunif j\alpha}}{\sqrt{Q_{J-j}}}\bigg)
\|{\bs y}\|_{L^2(\Omega;C^{0,\alpha}(X_N;\Ycal))}.
\]
Similarly, there exists a constant \(c_{\textnormal{MI}}>0\) such that
\begin{equation*}
\begin{aligned}
&\big\|\Ebb[{\bs y}{\bs y}^\intercal]
-E^{\textnormal{MI}}[{\bs y}{\bs y}^\intercal]
\big\|_{L^2(\Omega;L^2(X_N;\Ycal)\otimes L^2(X_N;\Ycal))}\\
&\qquad\leq c_{\textnormal{MI}}\sum_{j=0}^J
\frac{(j+1)2^{-\cunif j\alpha}}{\sqrt{Q_{J-j}}}
\|{\bs y}{\bs y}^\intercal\|_{L^2(\Omega;C^{0,\alpha}(X_N;\Ycal)
\otimes C^{0,\alpha}(X_N;\Ycal))}.
\end{aligned}
\end{equation*}
respectively.
\end{corollary}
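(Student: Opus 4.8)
The plan is to specialize the generic multilevel error bounds to H\"older data and to read off the convergence factors from the expressions already derived above. Note first that the hypothesis $\bs y\in L^2\big(\Omega;C^{0,\alpha}(X_N;\calY)\big)$ is precisely the statement that $\bs y\in L^2\big(\Omega;C_\rho(X_N;\calY)\big)$ for $\rho(t)=t^\alpha$, with coinciding norms. Hence \Cref{cor:datamlmc} applies verbatim and yields
\[
\big\|\Ebb[\bs y]-E^{\textnormal{ML}}[\bs y]\big\|_{L^2(\Omega;L^2(X_N;\calY))}
\le
\sigma_{\textnormal{ML}}(J,t^\alpha,\{Q_j\}_j)\,\|\bs y\|_{L^2(\Omega;C^{0,\alpha}(X_N;\calY))}
\]
together with the analogous inequality for $\bs y\bs y^\intercal$ involving $\sigma_{\textnormal{MI}}$. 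It therefore only remains to bound the two convergence factors by the stated sums up to an absolute constant.

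For the sample mean I would insert $\rho(\cdiam2^{-\cunif(j-1)})=\cdiam^\alpha2^{\cunif\alpha}2^{-\cunif j\alpha}$ and $\rho(\cdiam2^{-\cunif J})=\cdiam^\alpha2^{-\cunif J\alpha}$ into \eqref{eq:ConvFactorML}, which gives the closed form
\[
\sigma_{\textnormal{ML}}(J,t^\alpha,\{Q_j\}_j)
=\cdiam^\alpha2^{-\cunif J\alpha}
+\cdiam^\alpha2^{\cunif\alpha+1}\sum_{j=0}^J\frac{2^{-\cunif j\alpha}}{\sqrt{Q_{J-j}}}
\]
already recorded in the text, so that the $J$-independent prefactors $\cdiam^\alpha$ and $2^{\cunif\alpha+1}$ are absorbed into $c_{\textnormal{ML}}$. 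For the correlation I would proceed identically with \eqref{eq:ConvFactorMI}, additionally using that there are $|\{(k,k')\in\Nbb^2:k+k'=j\}|=j+1$ weak integer compositions of $j$ and that $\rho(\cdiam2^{-\cunif(k-1)})\rho(\cdiam2^{-\cunif(k'-1)})=\cdiam^{2\alpha}2^{2\cunif\alpha}2^{-\cunif j\alpha}$ whenever $k+k'=j$. This collapses the inner sum and turns \eqref{eq:ConvFactorMI} into $2\cdiam^\alpha2^{-\cunif J\alpha}+4^{\cunif\alpha+1}\cdiam^{2\alpha}\sum_{j=0}^J(j+1)2^{-\cunif j\alpha}/\sqrt{Q_{J-j}}$, whose geometric and combinatorial constants are collected into $c_{\textnormal{MI}}$.

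The only step that is not pure bookkeeping is showing that the bias contributions $\cdiam^\alpha2^{-\cunif J\alpha}$, respectively $2\cdiam^\alpha2^{-\cunif J\alpha}$, are controlled by the displayed sums. Since $2^{-\cunif J\alpha}\le1$ and, by the standing multilevel convention, the finest level carries only $Q_J=\Ocal(1)$ samples, one has $2^{-\cunif J\alpha}\le C\,Q_J^{-1/2}$ for an absolute constant $C$, that is, the bias is at most $C$ times the $j=0$ summand of $\sum_{j=0}^J2^{-\cunif j\alpha}/\sqrt{Q_{J-j}}$ (and of $\sum_{j=0}^J(j+1)2^{-\cunif j\alpha}/\sqrt{Q_{J-j}}$), so that it can be folded into $c_{\textnormal{ML}}$ and $c_{\textnormal{MI}}$. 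I expect this absorption of the bias term, where one must be careful about what the constants are allowed to depend on, to be the main -- albeit minor -- obstacle; everything else is a direct substitution together with the reindexing $i=J-j$ that makes the dominance of the sums transparent.
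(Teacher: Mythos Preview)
Your proposal matches the paper's own reasoning: the paper gives no explicit proof but simply states that the corollary ``immediately yields'' from \Cref{cor:datamlmc} by substituting the H\"older convergence factors already computed in the text, which is precisely what you do. Your additional remark about absorbing the bias term $\cdiam^\alpha 2^{-\cunif J\alpha}$ into the sum via the assumption $Q_J=\Ocal(1)$ is a point the paper glosses over entirely; you are right that some such assumption is needed for the bound to hold with a $J$- and $\{Q_j\}$-independent constant, since otherwise taking all $Q_j$ large for fixed $J$ would violate the inequality.
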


We note that similar multilevel estimates as in
\Cref{thm:MLMCCor} and \Cref{cor:covarianceapprox} have been shown for an
$\calH^2$-matrix based approach in \cite{Doe2023} and for a wavelet based
approach in \cite{HHKS2024}. Both approaches are able to approximate the second
moment with optimal cost, but have limited applicability. The $\calH^2$-matrix
approach is based on additional assumptions on the regularity of the second
moment, which are hard to verify for general site-to-value maps. The wavelet
based approach leverages the multilevel structure of wavelet representations
of site-to-value maps. However, as wavelet constructions are usually confined
to intervals or tensor products thereof, it remains to be investigated how the
approach can be applied to general site-to-value maps.

\section{Numerical examples}\label{sec:Numerics}
In this section, we verify the theoretical findings and give illustrative
examples, ranging from the computation of the discrete modulus of continuity for
different types of functions over the convergence of the discrete modulus of
continuity and the piecewise constant approximation of discrete data to the
multilevel Monte Carlo method for discrete data.

\subsection{Computation of the discrete modulus of continuity}%
\label{sec:discmodcomp}

\paragraph{Wiener process on \([0,1]\).}\label{sec:wiener1}
In this example, we consider the pathwise smoothness of the Wiener process,
that is, a Gaussian process with mean zero and covariance function
\(k(s,t)=\min\{s,t\}\) on \([0,1]\). It is well known that the paths of the
Wiener process are H\"older continuous for all exponents
\(\alpha<1/2\). Especially, their modulus of continuity is well approximated for
small \(t\) by
\begin{equation}\label{eq:wienermodcont}
\omega(f,t)
\approx \sqrt{2t\log(1/t)}.
\end{equation}
We compute the discrete modulus of continuity using
\Cref{alg:initialization,alg:evaluation}, respectively for 10 different
realizations of the Wiener process, where we set \(r=10^{-5},\ R=2,\ T=1\). For
the generation of the evaluation points of the Wiener process, we consider
independently drawn uniformly distributed random numbers in $[0,1]$. We keep
these time points fixed over all runs. We employ
\(N=10^6\) time points resulting in a fill distance of
\(h_{X_N}=6.00\cdot 10^{-5}\) and a separation distance of
\(q_{X_N}=5.00\cdot 10^{-12}\). To evaluate the discrete modulus of continuity
by means of \Cref{alg:evaluation}, we choose a quadratically graded mesh with
1000 points with the smallest value being \(10^{-5}\) and the largest
value being $1$. 

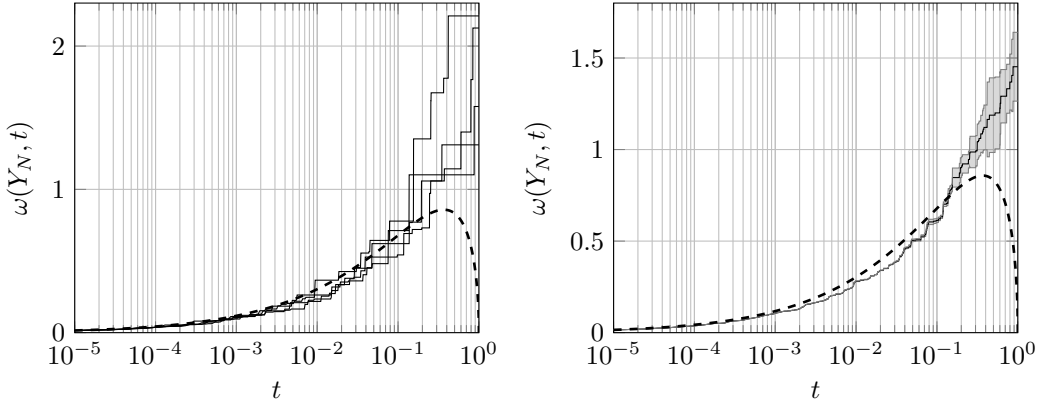
\begin{figure}[htb]
\centering
\begin{tikzpicture}
  \begin{axis}[
  width = 0.46\textwidth,
  xmin = 1e-5, xmax =1,
  ymin = 0, ymax = 2.3,
  xmode=log,
    cycle list={{black, mark=none}},
      xlabel={$t$},
      ylabel={$\omega(Y_N,t)$} ,
      grid = both 
    ]
\addplot table [x index=0, y index=1, mark=none,black, solid] {%
  ./Images/BM/eval_1D.txt};
  \addplot table [x index=0, y index=4, mark=none,black, solid] {%
  ./Images/BM/eval_1D.txt};
    \addplot table [x index=0, y index=7, mark=none,black, solid] {%
  ./Images/BM/eval_1D.txt};
    \addplot table [x index=0, y index=10, mark=none,black, solid] {%
  ./Images/BM/eval_1D.txt};
        \addplot+[
      no marks,
            line width = 1pt,
             dashed,
      black
    ] table[
      x index=0,
      y expr={(2*\thisrowno{0}*ln(1/\thisrowno{0}))^(0.5)}
    ] {./Images/BM/eval_1D.txt};
  \end{axis}
\end{tikzpicture}
\begin{tikzpicture}
  \begin{axis}[
  width = 0.46\textwidth,
  xmin = 1e-5, xmax =1,
  ymin = 0, ymax = 1.8,
  xmode=log,
      xlabel={$t$},
      ylabel={$\omega(Y_N,t)$} ,
      grid = both 
    ]
    \addplot+ [gray, mark=none, solid, name path=lower]
      table [x index=0, y index=1] {./Images/BM/mean_1D.txt};
    \addplot+ [black, mark=none, solid] table [x index=0, y index=2]
      {./Images/BM/mean_1D.txt};
    \addplot+ [gray, mark=none, solid, name path=upper]
      table [x index=0, y index=3] {./Images/BM/mean_1D.txt};
    \addplot+ [
      fill=gray, fill opacity=0.3,
    ] fill between[
      of=lower and upper,
    ];
        \addplot+[
      no marks,
            line width = 1pt,
             dashed,
      black
    ] table[
      x index=0,
      y expr={(2*\thisrowno{0}*ln(1/\thisrowno{0}))^(0.5)}
    ] {./Images/BM/eval_1D.txt};
  \end{axis}
\end{tikzpicture}

\caption{\label{fig:DMOC1D}Samples of the discrete modulus of continuity 
of the Wiener process (left).
Average modulus of continuity and one standard deviation envelope (right).
Dashed lines indicate the approximation \eqref{eq:wienermodcont}
for small $t$.}
\end{figure}

The left panel of \Cref{fig:DMOC1D} shows four realizations of the resulting
discrete moduli of continuity together with the approximate analytical rate from
\eqref{eq:wienermodcont}, while the right panel shows the expected modulus
of continuity averaged over the ten runs, where the gray shaded area
corresponds to one standard deviation. As can be seen and as expected, the
obtained trajectories as well as the mean closely follow the graph of
\eqref{eq:wienermodcont} for small values of \(t\).

\paragraph{Continuous but not H\"older continuous function on $\bbS^2$.}
\begin{figure}[htb]
\centering
\begin{minipage}{0.4\textwidth}
	\begin{tikzpicture}
		\draw (0,0) node {%
        \includegraphics[width=0.9\textwidth,clip,trim=1600 500 1600 500]{%
				./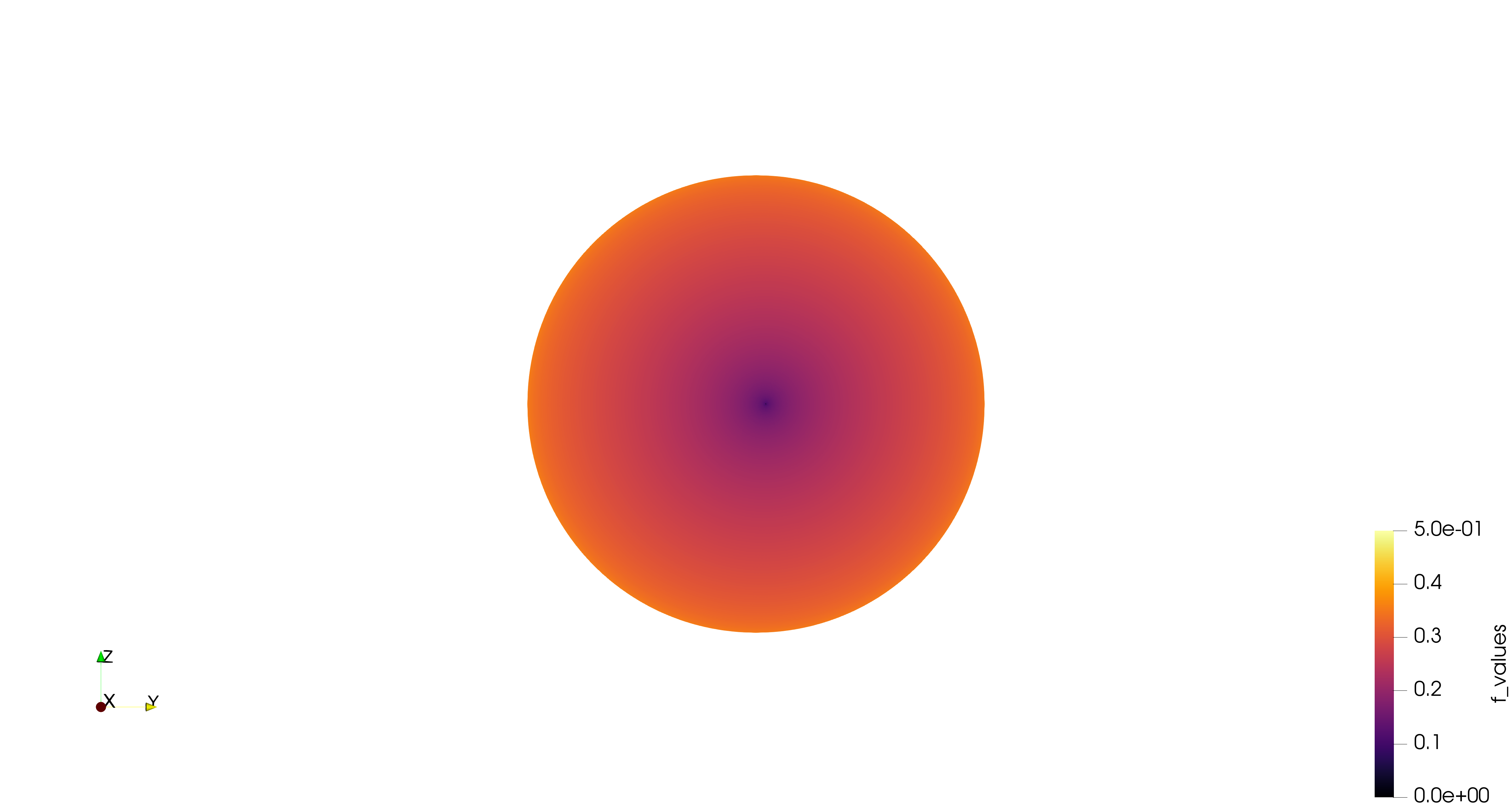}};
		\fill[white] (0.1,0) circle (0.05);
		\draw[white] (0.3,-0.25) node {$\bfx_0$};
	\end{tikzpicture}
\end{minipage}
\begin{minipage}{0.5\textwidth}
\begin{tikzpicture}
\begin{axis}[
	width = 0.9\textwidth,
	xmin =3e-3, xmax = 6.3,
	ymin = 0, ymax = 0.6,
xmode=log,
	xlabel={$t$},
	ylabel={$\omega(Y_N,t)$} ,
	grid = both 
	]
	\addplot+ [black, mark=none, solid] table [x index=0, y index=1] {%
    ./Images/Sphere_log/eval_sphere_log.txt};
	\addplot+[
	no marks,
	line width = 1pt,
	dashed,
	black
	] table[
	x index=0,
	y expr={1 * (abs(1/(ln(\thisrowno{0}/3.141)-2)) * (\thisrowno{0} < 3.141)
  + 0.5 *  (\thisrowno{0} >= 3.141))}
	] {./Images/Sphere_log/eval_sphere_log.txt};
\end{axis}
\end{tikzpicture}
\end{minipage}
\caption{\label{fig:logSphere}Continuous but not H\"older continuous function
from \Cref{eq:nothoelder} on $\bbS^2$ (left) and corresponding discrete modulus
of continuity (right). The dashed line indicates the exact modulus of
continuity.}
\end{figure}
The next example is dedicated to showing that the modulus of continuity can
quantify smoothness which is weaker than H\"older continuity. As an example
of a continuous but not H\"older continuous function, we consider the function
$f\colon\bbS^2\to\bbR$ defined by
\begin{equation}\label{eq:nothoelder}
f({\bs x}) =
\begin{cases}
\displaystyle\bigg|\log\frac{d_{\bbS^2}({\bs x},{\bs x}_0)}{\pi}-2\bigg|^{-1},
& {\bs x} \neq{\bs x}_0, \\
	0, & {\bs x} = {\bs x}_0,
\end{cases}
\end{equation}
where ${\bs x}_0\in\bbS^2$ and
\(d_{\Sbb^2}({\bs x},{\bs y})\) denotes the geodesic distance.
A visualization of this function is shown on the left hand side of
\Cref{fig:logSphere} for the choice
\({\bs x}_0=[1.00,3.53\cdot 10^{-2},1.00\cdot 10^{-6}]^\intercal\). 
The function is evaluated at a Fibonacci lattice, see \cite{MBRS+13}, with
\(N=10^6\) points. The fill distance is \(h_{X_N}\approx 3.55\cdot 10^{-3}\) 
and the separation distance \(q_{X_N}\approx3.01\cdot 10^{-3}\). Both
quantities have been approximated using Euclidean distances.
It is a short computation to check that the modulus of continuity of
\Cref{eq:nothoelder} is given by
\begin{equation}\label{eq:nothoeldermodcont}
\omega(f,t)
=
\begin{cases}
	0,& t=0,\\
  \displaystyle \bigg|\log\frac{t}{\pi}-2\bigg|^{-1}, & t\in (0,\pi),\\
	\displaystyle \frac{1}{2}, & t\geq \pi.
\end{cases}
\end{equation}

We approximate the modulus of continuity choosing \(r=10^{-3},\ R=2,\ T=2\pi\)
in \Cref{alg:initialization,alg:evaluation}. Within the algorithms, the
distances are chosen as the geodesic distances on the sphere. The resulting
approximation to the discrete modulus of continuity is found on the right
hand side of \Cref{fig:logSphere}. As can be seen, the approximation closely
follows the true behavior, where we notice that the discrete modulus of
continuity becomes zero for \(t<q_X\), as it is to be expected.

\paragraph{Real-world weather data}
Our final example concerning the estimation of the modulus of continuity
considers a time series of real-world data. We consider the temperature 
two meters above ground level at the airport Cologne/Bonn (CGN) in ${}^\circ$C.
The data is provided in 10 minute intervals by the DWD Climate Data Center (CDC)
for the period from 2 December 2008 to 11 September 2025, see \cite{DWD},
and consists of 881\,528 data points covering a time span of almost
$9\cdot 10^6$ minutes. We compute the modulus of continuity on the full data
set using \(r=10,\ R=2,\ T=10^6\) in 
\Cref{alg:initialization,alg:evaluation}. 
\begin{figure}[htb]
\centering
\begin{tikzpicture}
  \begin{axis}[
  width = 0.46\textwidth,
  xmin = 0, xmax = 9e6,
  ymin = -20, ymax = 40,
      xlabel={$t$ (min)},
      ylabel={$T\ ({}^\circ\text{C})$},
      grid = both 
    ]
    \addplot+ [black, mark=none, solid, name path=lower]
      table [x index=0, y index=1] {./Images/Tmp/temp_sub.txt};
  \end{axis}
\end{tikzpicture}
\begin{tikzpicture}
  \begin{axis}[
  width = 0.46\textwidth,
  xmin = 10, xmax =1e6,
  ymin = 0, ymax = 40,
  xmode=log,
      xlabel={$t$ (min)},
      ylabel={$\omega(Y_N,t)$} ,
      grid = both 
    ]
    \addplot+ [black, mark=none, solid, name path=lower]
      table [x index=0, y index=1] {./Images/Tmp/eval_1D_tmp.txt};
  \end{axis}
\end{tikzpicture}
\caption{\label{fig:weathermod}Time series of the real-world weather data (left)
and its discrete modulus of continuity (right).}
\end{figure}
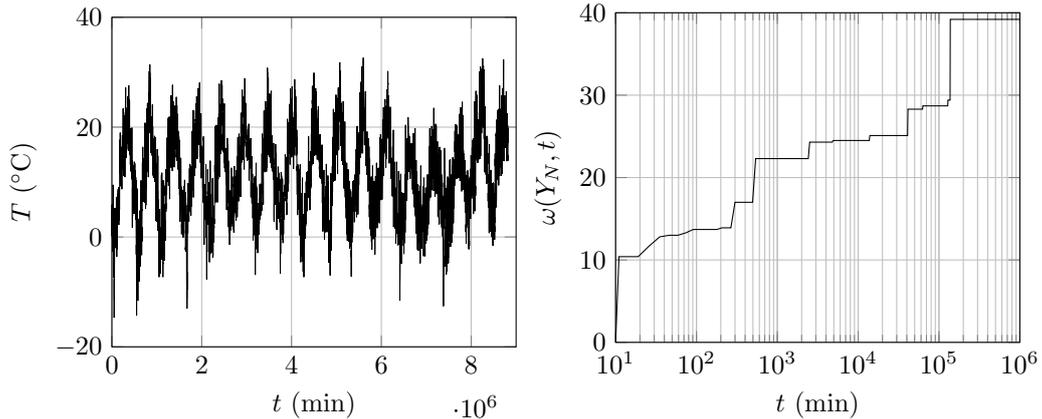
A daily subsample of the data set can be found on the
left-hand side of \Cref{fig:weathermod}. The separation
distance is \(q_{X_N}=10\), implying that the discrete modulus
of continuity is zero for $t<q_X$. The fill distance is \(h_{X_N} = 60\),
indicating that the dataset contains gaps with only one measurement recorded
per hour. The approximate discrete
modulus of continuity is shown on the right hand side of \Cref{fig:weathermod}.
The modulus of continuity indicates a rather irregular behavior already at the
smallest time interval of 10 minutes. Indeed, closer investigation of the data
reveals several sudden temperature drops of more than 10\textdegree C within a
10 minute span, which are responsible for this irregularity,
see \Cref{tab:weathertempdrop}. Thus, the modulus of continuity 
correctly captures the smoothness of the data.

\begin{table}
	\centering
	\begin{tabular}{|c|c|}
		\hline
		date and time (YYYY-MM-DD hh:mm:ss) & \textdegree C \\\hline
		$\vdots$ & $\vdots$ \\\hline
		2017-07-19 13:30:00	& 30.5 \\\hline
		\textbf{2017-07-19 13:40:00}	& \textbf{29.6} \\\hline
		\textbf{2017-07-19 13:50:00}	& \textbf{19.2} \\\hline
		2017-07-19 14:00:00 & 18.9 \\\hline
		$\vdots$ & $\vdots$ \\\hline
		2018-08-09 12:30:00	& 30 \\\hline
		\textbf{2018-08-09 12:40:00}	& \textbf{29.6} \\\hline
		\textbf{2018-08-09 12:50:00}	& \textbf{19.3} \\\hline
		2018-08-09 13:00:00	& 21.2 \\\hline
		$\vdots$ & $\vdots$ \\\hline
	\end{tabular}
	\caption{\label{tab:weathertempdrop}Excerpt from the real world weather data
  indicating the temperature drops responsible for the irregularity of the
temperature measurements as determined by the modulus of continuity 
in \Cref{fig:weathermod}.}
\end{table}

\subsection{Consistency for deterministic data sites}
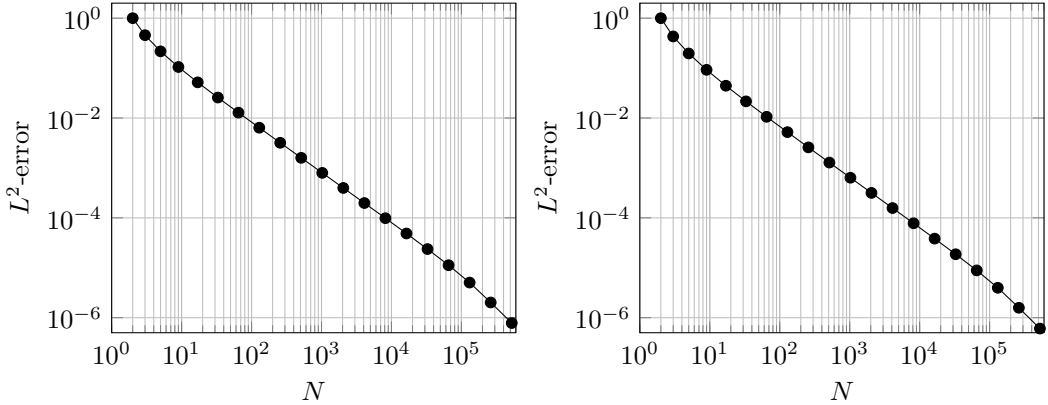
\begin{figure}[htb]
\centering
\begin{tikzpicture}
  \begin{axis}[
  width = 0.46\textwidth,
  xmin = 1, xmax =6e5,
  ymin = 5e-7, ymax = 2,
  ymode=log,
  xmode=log,
      xlabel={$N$},
      ylabel={$L^2$-error} ,
      grid = both 
    ]
    \addplot+ [black, solid, name path=lower,      
      mark options={draw=black,fill=black},] 
      table [x index=0, y index=1] {./Images/DMOC_Conv/conv_moc_311_sqrt.txt};
  \end{axis}
\end{tikzpicture}
\begin{tikzpicture}
  \begin{axis}[
  width = 0.46\textwidth,
  xmin = 1, xmax =6e5,
  ymin = 5e-7, ymax = 2,
    ymode=log,
      xmode=log,
      xlabel={$N$},
      ylabel={$L^2$-error} ,
      grid = both 
    ]
    \addplot+ [black, solid, name path=lower,      
      mark options={draw=black,fill=black},] 
      table [x index=0, y index=1] {./Images/DMOC_Conv/conv_moc_311_log.txt};
  \end{axis}
\end{tikzpicture}
\caption{\label{fig:Conv311}Convergence of the discrete modulus of continuity
  in \(L^2\) for \(N\to\infty\) using deterministic data sites.
The left panel shows the convergence for the square root function, while the
right panel shows the convergence for
the log function on the interval.}
\end{figure}

\paragraph{Consistency in $L^2$}
We verify the consistency estimates from \Cref{cor:detconsab} in $L^p$ for
$p=2$. To this end, we consider two test cases, namely the function
\(f(x)=\sqrt{x}\) on $[0,1]$ and the univariate version of the non-H\"older
continuous function from \eqref{eq:nothoelder}, that is,
\[
f(x) =
\begin{cases}
	\displaystyle |\log x-2|^{-1}, & x\neq 0, \\
	0, & x = 0,
\end{cases}
\]
also evaluated on \([0,1]\).
For the square root, the modulus of continuity is \(\omega(f,t)=t^{-1/2}\),
while it is 
\[
\omega(f,t)
=
\begin{cases}
	0,& t=0,\\
	\displaystyle |\log t-2|^{-1}, & t\in (0,1),\\
	\displaystyle \frac{1}{2}, & t\geq 1.
\end{cases}
\]
for the non-H\"older continuous function from \Cref{eq:nothoelder},
see also \Cref{eq:nothoeldermodcont}. To demonstrate the consistency, we
compute the modulus of continuity using a brute-force approach with runtime
\(\Ocal(N^2)\) and compare it to the true one. For the spatial discretization,
we employ uniform subdivisions of the unit interval with \(N=2^j+1\) points
for \(j=0,\ldots,19\). As the discrete moduli of continuity are nonsmooth, we
use a composite trapezoidal rule on \([0,1]\) with $10^{6}$ quadratically
graded points. The resulting quadrature error is around $2.5\cdot 10^{-13}$ for
both moduli of continuity.
We remark that such a high accuracy of the quadrature has been necessary, as
otherwise the convergence stalled.
The results are depicted in \Cref{fig:Conv311}. The left-hand side of the
figure shows the convergence for the modulus of continuity of the
square root, while the right-hand
side shows the convergence for the modulus of continuity of the
non-H\"older continuous function. 
As can be inferred from the figure,
in both cases, the rate of convergence is approximately linear.

\subsection{Consistency for empirical data sites}
To show consistency also for the case of empirical data sites,
we perform a similar experiment as in the previous paragraph. This time, we
randomize the locations of the data sites using independently drawn uniform
random points in $[0,1]$. All other parameters are kept as before.
We show the average error over 10 runs in \Cref{fig:Conv45}.
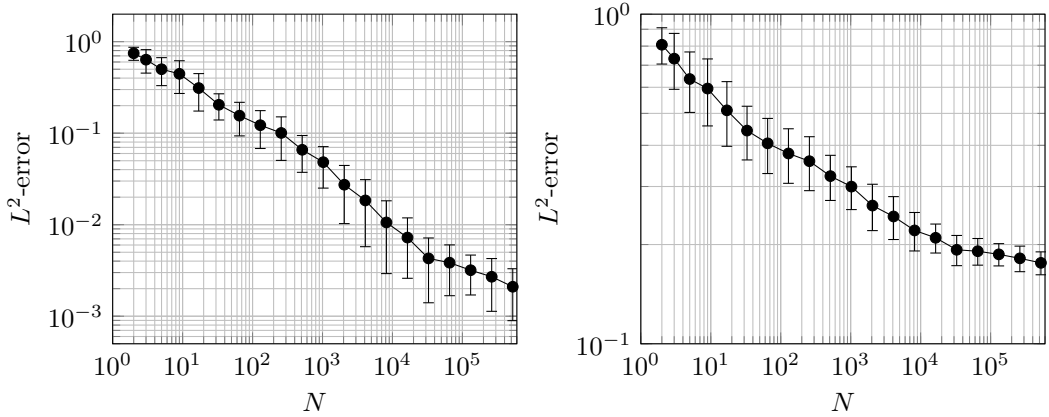
\begin{figure}[htb]
\centering
\begin{tikzpicture}
  \begin{axis}[
  width = 0.46\textwidth,
  xmin = 1, xmax =6e5,
  ymin = 5e-4, ymax = 2,
  ymode=log,
  xmode=log,
      xlabel={$N$},
      ylabel={$L^2$-error} ,
      grid = both,
          error bars/y dir=both,
    error bars/y explicit, 
    ]
    \addplot+ [
      black,
      mark options={draw=black,fill=black},
      error bars/.cd,
      y explicit,
    ] table [
      x index=0, %
      y index=11, %
      y error index=12 %
    ] {./Images/DMOC_Conv/conv_moc_45_sqrt.txt};
  \end{axis}
\end{tikzpicture}
\begin{tikzpicture}
  \begin{axis}[
  width = 0.46\textwidth,
  xmin = 1, xmax =6e5,
  ymin = 1e-1, ymax = 1,
    ymode=log,
      xmode=log,
      xlabel={$N$},
      ylabel={$L^2$-error} ,
      grid = both,
          error bars/y dir=both,
    error bars/y explicit, 
    ]
    \addplot+ [
      black,
      mark options={draw=black,fill=black},
      error bars/.cd,
      y explicit,
    ] table [
      x index=0, %
      y index=11, %
      y error index=12 %
    ] {./Images/DMOC_Conv/conv_moc_45_log.txt};
  \end{axis}
\end{tikzpicture}
\caption{\label{fig:Conv45}Convergence of the discrete modulus of continuity in
  \(L^2\) for \(N\to\infty\) using uniformly random data sites.
The left panel shows the convergence for the square root function, while the
right panel shows the convergence for the log function on the interval. The
error bars correspond to one standard deviation.}
\end{figure}
As we can see from the left panel, the rate of convergence is approximately
\(\Ocal(N^{-1/2})\) for the modulus of continuity of the square root,
while it is significantly reduced for the 
 modulus of continuity of the non-H\"older continuous function.

\subsection{Piecewise constant approximation of discrete data}
In this example, we verify the approximation rates for piecewise constant
approximation spaces defined directly at the data sites, compare
\Cref{sec:Approximation}. We focus on the approximation estimate
\Cref{eq:interapproxdiscmodcont} from \Cref{lem:pwconstantapproxstv} which
bounds the interpolation error on data sets in terms of the discrete modulus of
continuity. To this end, we use the data sets from \Cref{sec:discmodcomp} as 
ground truth on which we build coarser piecewise constant approximation spaces
by an $s$-$d$-tree based coarsening procedure. The interpolation points for the
constant approximation within the resulting clusters are chosen randomly.
If a cluster at a certain level is empty, we replace its value by that of the
parent. For the Wiener process, we choose \(N=10^6\) for the ground truth
and set \(r=10^{-5},\ R=2,\ T=1\) 
for the computation of the discrete modulus of continuity by
\Cref{alg:initialization,alg:evaluation}. As before, the time points
are sampled uniformly at random from the interval $[0,1]$ and remain fixed
across ten runs. Each run selects different interpolation points and samples a
new path of the Wiener process. 
\Cref{fig:ConvWproc} shows the average maximum
pointwise approximation error versus the mesh size \(h\), that is, the
maximum cluster diameter at each level.
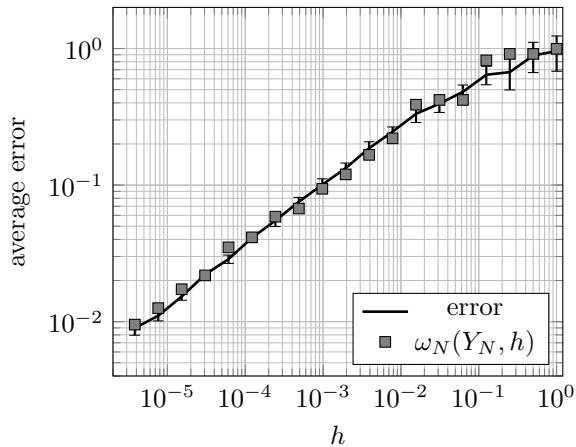
\begin{figure}[hbt]
\begin{center}
\begin{tikzpicture}
  \begin{axis}[
    width = 0.5\textwidth,
    xmin = 2e-6,
    xmax = 1.2,
    ymin = 4e-3,
    ymax = 2,
    xmode=log,
    ymode=log,
    xlabel={$h$},
    ylabel={average error},
    error bars/y dir=both,
    error bars/y explicit,
    grid=both,
    legend pos=south east
  ]
    \addplot+ [
      line width = 1pt,
      black,
      no marks,
      mark options={draw=black,fill=black},
      error bars/.cd,
      y explicit,
  error bar style={thick},
    ] table [
      x index=0, %
      y index=12, %
      y error index=13 %
    ] {./Images/Disc_Conv/discrete_err_wproc.txt};
    \addlegendentry{error}
        \addplot+[
      only marks,
      mark options={draw=black,fill=gray},
    ] table[
      x index=0,
      y index=1,
    ] {./Images/Disc_Conv/discrete_err_wproc.txt};
    \addlegendentry{$\omega_N(Y_N,h)$}
  \end{axis}
\end{tikzpicture}
\caption{\label{fig:ConvWproc}Average approximation error for the Wiener 
  process on \([0,1]\)
versus the mesh size \(h\).
The error bars show one standard deviation. The gray boxes indicate the 
corresponding discrete modulus of continuity, evaluated approximately using
\Cref{alg:initialization,alg:evaluation}.}
\end{center}
\end{figure} 
The error bars correspond to one standard deviation. The
boxes mark the approximate discrete modulus of continuity computed for the
ground truth. Especially, for
smaller mesh sizes, the interpolation error is bounded by the 
approximate discrete modulus of continuity, as expected. For larger
mesh sizes, the discrete modulus of continuity is evaluated at larger values
of $t$, where we need to expect a larger approximation error from
\Cref{alg:initialization,alg:evaluation}. Still, the values of the discrete
modulus of continuity are close to the true errors.

For the non-H\"older continuous function \eqref{eq:nothoelder}, we choose a
Fibonacci lattice with \(N=10^6\) points as ground truth
and set \(r=10^{-2},\ R=2,\ T=2\pi\)
for the computation of the discrete modulus of continuity by
\Cref{alg:initialization,alg:evaluation}. As before, 
the interpolation points are randomly sampled within each cluster.
\Cref{fig:ConvSphereLog} shows the average maximum
pointwise approximation error versus the mesh size \(h\), computed using
ten runs. Here, the mesh size is computed using the geodesic distance. 
\begin{figure}[htb]
\begin{center}
\begin{tikzpicture}
  \begin{axis}[
    width = 0.5\textwidth,
    xmin = 0,
    xmax = 7,
    ymin = 1e-1,
    ymax = 1,
    xmode=log,
    ymode=log,
    xlabel={$h$},
    ylabel={average error},
    error bars/y dir=both,
    error bars/y explicit,
    grid=both,
    legend pos=south east
  ]
    \addplot+ [
      black,
      no marks,
      mark options={draw=black,fill=black},
      error bars/.cd,
      y explicit,
  error bar style={thick},
    ] table [
      x index=0, %
      y index=12, %
      y error index=13 %
    ] {./Images/Disc_Conv/discrete_err_log.txt};
    \addlegendentry{error}
        \addplot+[
      only marks,
      mark options={draw=black,fill=gray},
    ] table[
      x index=0,
      y index=1
    ] {./Images/Disc_Conv/discrete_err_log.txt};
    \addlegendentry{$\omega_N(Y_N,h)$}
  \end{axis}
\end{tikzpicture}
\caption{\label{fig:ConvSphereLog}Average approximation error for the
  non-H\"older 
  continuous function 
\eqref{eq:nothoelder} on \(\Sbb^2\)
versus the mesh size \(h\).
The error bars show one standard deviation. The gray boxes indicate the 
corresponding discrete modulus of continuity, evaluated approximately using
\Cref{alg:initialization,alg:evaluation}.}
\end{center}
\end{figure}
In this example, the maximum interpolation error stays clearly below the
approximate discrete modulus of continuity computed for the ground truth.

Finally, we consider the entire time series data set with \(N=881\,528\) points
and set \(r=10,\ R=2,\ T=10^6\) for the computation of the discrete modulus of
continuity by \Cref{alg:initialization,alg:evaluation}. As in the
other two examples, the interpolation points are randomly sampled within each
cluster.
\Cref{fig:ConvTmp} shows the average maximum
pointwise approximation error versus the mesh size \(h\), computed using
ten runs.
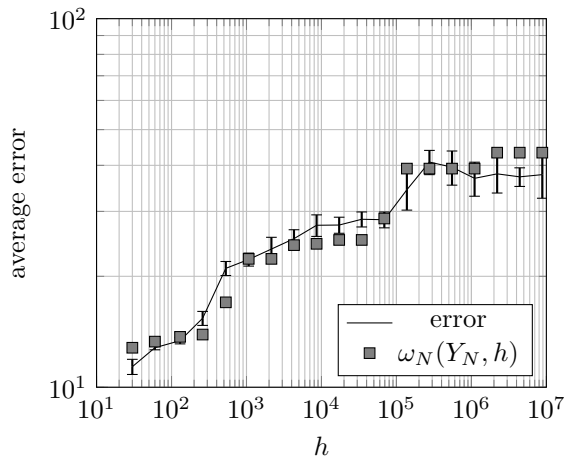
\begin{figure}[htb]
\begin{center}
\begin{tikzpicture}
  \begin{axis}[
    width = 0.5\textwidth,
    xmin = 10,
    xmax = 1e7,
    ymin = 10,
    ymax = 100,
    xmode=log,
    ymode=log,
    ytick={10,100},
    xlabel={$h$},
    ylabel={average error},
    error bars/y dir=both,
    error bars/y explicit,
    grid=both,
    legend pos=south east
  ]
    \addplot+ [
      black,
      no marks,
      mark options={draw=black,fill=black},
      error bars/.cd,
      y explicit,
  error bar style={thick},
    ] table [
      x index=0, %
      y index=12, %
      y error index=13 %
    ] {./Images/Disc_Conv/discrete_err_tmp.txt};
    \addlegendentry{error}
        \addplot+[
      only marks,
            mark options={draw=black,fill=gray},
    ] table[
      x index=0,
      y index=1
    ] {./Images/Disc_Conv/discrete_err_tmp.txt};
    \addlegendentry{$\omega_N(Y_N,h)$}
  \end{axis}
\end{tikzpicture}
\caption{\label{fig:ConvTmp}Average approximation error for the time series data
versus the mesh size \(h\).
The error bars show one standard deviation. The gray boxes indicate the 
corresponding approximate discrete modulus of continuity, evaluated approximately
using \Cref{alg:initialization,alg:evaluation}.}
\end{center}
\end{figure}
Here, for the large and the small values of the mesh size,
the discrete modulus of continuity bounds the approximation error, while
it is slightly smaller than the approximation error for intermediate mesh sizes.
We attribute this again to the approximation error of
\Cref{alg:initialization,alg:evaluation}.

\subsection{Multilevel Monte Carlo}
\paragraph{Wiener process}
In this example, we consider the multilevel Monte Carlo approximation of the
mean of the Wiener process on \([0,1]\), sampled at random time points as
in \Cref{sec:wiener1}. Four different realizations of the
Wiener process are depicted in \Cref{fig:BM}.

\begin{figure}[htb]
\centering
\begin{tikzpicture}
  \begin{axis}[
  width = 0.5\textwidth,
  xmin = 0, xmax =1,
  ymin = -1, ymax = 1.35,
    cycle list={{black, mark=none}},
      xlabel={$t$},
      ylabel={$W_t$}  
    ]
\addplot table [x index=0, y index=7, mark=none,black, solid] {%
  ./Images/BM/combined_sss.txt};
\addplot table [x index=0, y index=3, mark=none,black, solid] {%
./Images/BM/combined_sss.txt};
\addplot table [x index=0, y index=8, mark=none,black, solid] {%
./Images/BM/combined_sss.txt};
\addplot table [x index=0, y index=10, mark=none,black, solid] {%
./Images/BM/combined_sss.txt};
  \end{axis}
\end{tikzpicture}
\caption{\label{fig:BM}Different realizations of the Wiener process on $[0,1]$.}
\end{figure}
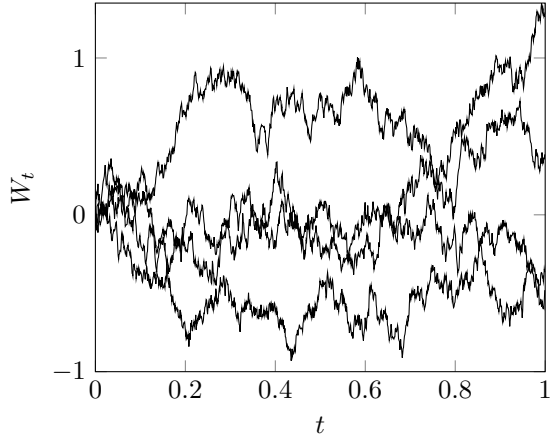

As noted before, the paths of the Wiener process are H\"older continuous
for all exponents \(\alpha<1/2\).
Hence, in view of \Cref{cor:HoelderApprox}, based on a balanced binary tree, the
variance at level \(\ell\) scales as \(V_\ell=\Ocal(2^{-\alpha\ell})\), while the
cost is linear with the level, that is, \(C_\ell=\Ocal(2^\ell)\). Therefore, the 
number of samples to achieve an overall root mean square error of \(1/\sqrt{N}\)
has to scale like \(\sqrt{V_\ell/C_\ell}\) on level \(\ell\), see \cite{Gil15}. 
Consequently, choosing \(N_0=N\) samples at the coarsest level, the number of
samples has to decrease by a factor of 
\(N_\ell/N_{\ell-1}=\sqrt{V_\ell/V_{\ell-1}}\cdot\sqrt{C_{\ell-1}/C_\ell}\).
In the current example this leads to 
\(N_\ell/N_{\ell-1}=2^{-(\alpha+1)/2}\approx 2^{-3/2}\).

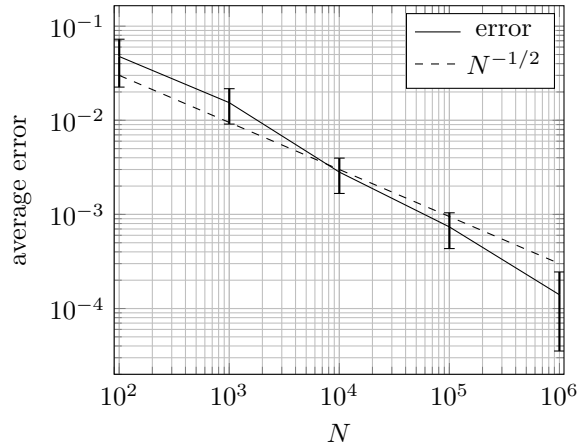
\begin{figure}[htb]
\begin{center}
\begin{tikzpicture}
  \begin{axis}[
    width = 0.5\textwidth,
    xmin = 90,
    xmax = 1100000,
    ymin = 2e-5,
    xmode=log,
    ymode=log,
    xlabel={$N$},
    ylabel={average error},
    error bars/y dir=both,
    error bars/y explicit,
    grid=both,
  ]
    \addplot+ [
      black,
      no marks,
      mark options={draw=black,fill=black},
      error bars/.cd,
      y explicit,
  error bar style={thick},
    ] table [
      x index=0, %
      y index=1, %
      y error index=2 %
    ] {./Images/BM/convergenceBM.txt};
    \addlegendentry{error}
        \addplot+[
      no marks,
             dashed,
      black
    ] table[
      x index=0,
      y expr={0.3*(\thisrowno{0})^(-0.5)}
    ] {./Images/BM/convergenceBM.txt};
    \addlegendentry{$N^{-1/2}$}
  \end{axis}
\end{tikzpicture}
\caption{\label{fig:RMSEBM}Average error of the multilevel Monte Carlo
  approximation of the mean of the Wiener process for different $N$ over ten
  different runs. The error bars show one standard deviation.}
\end{center}
\end{figure}

The convergence of the multilevel Monte Carlo approximation to the mean of the
Wiener process on \([0,1]\) is shown in \Cref{fig:RMSEBM}. The figure shows the
maximum error averaged over 10 runs. The error bars indicate one standard
deviation. Over each of these runs, the uniform random timepoints points for the
Wiener process are kept fixed, while the interpolation points used for the
multilevel Monte Carlo method are randomized in each leaf of the cluster tree.
Nested interpolation points are obtained by selecting the first non-empty
child and assigning its sample point to the parent cluster. This interpolation
procedure is compliant with the theoretical result in
\Cref{cor:HoelderApprox}. The theoretical rate of \(N^{-1/2}\) is clearly
achieved in this example.

\paragraph{Isotropic Gaussian random field on $\Sbb^2$.}
We consider an isotropic Gaussian field on the
unit sphere $\Sbb^2$ with mean zero and covariance function
\(k({\bs x},{\bs y})=e^{-4d_{\Sbb^s}({\bs x},{\bs y})}\), where
\(d_{\Sbb^2}({\bs x},{\bs y})\) denotes the geodesic distance. The field is
evaluated at a Fibonacci lattice. \Cref{fig:FibonacciSphere} shows the resulting
points for \(N=10^2,10^3,10^4,10^5\) together with the bounding boxes of
the leaves of the corresponding $s$-d-tree.
\begin{figure}[htb]
	\begin{center}
		\begin{tikzpicture}
			\draw(0,0) node{\includegraphics[scale=0.05,clip,trim=1350 550 1300 550]{%
        ./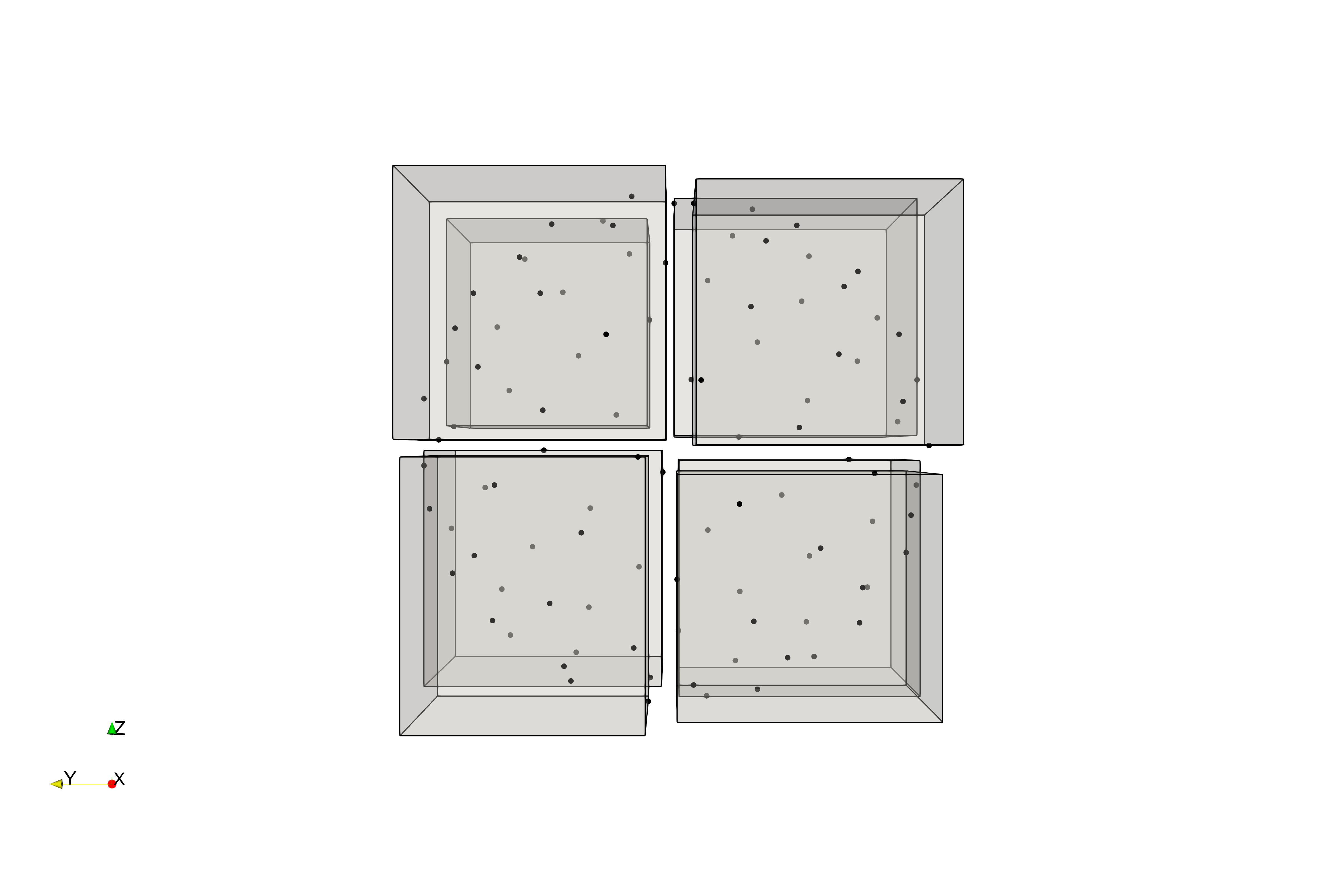}};
		\draw(3.6,0) node{\includegraphics[scale=0.05,clip,trim=1400 550 1400 550]{%
      ./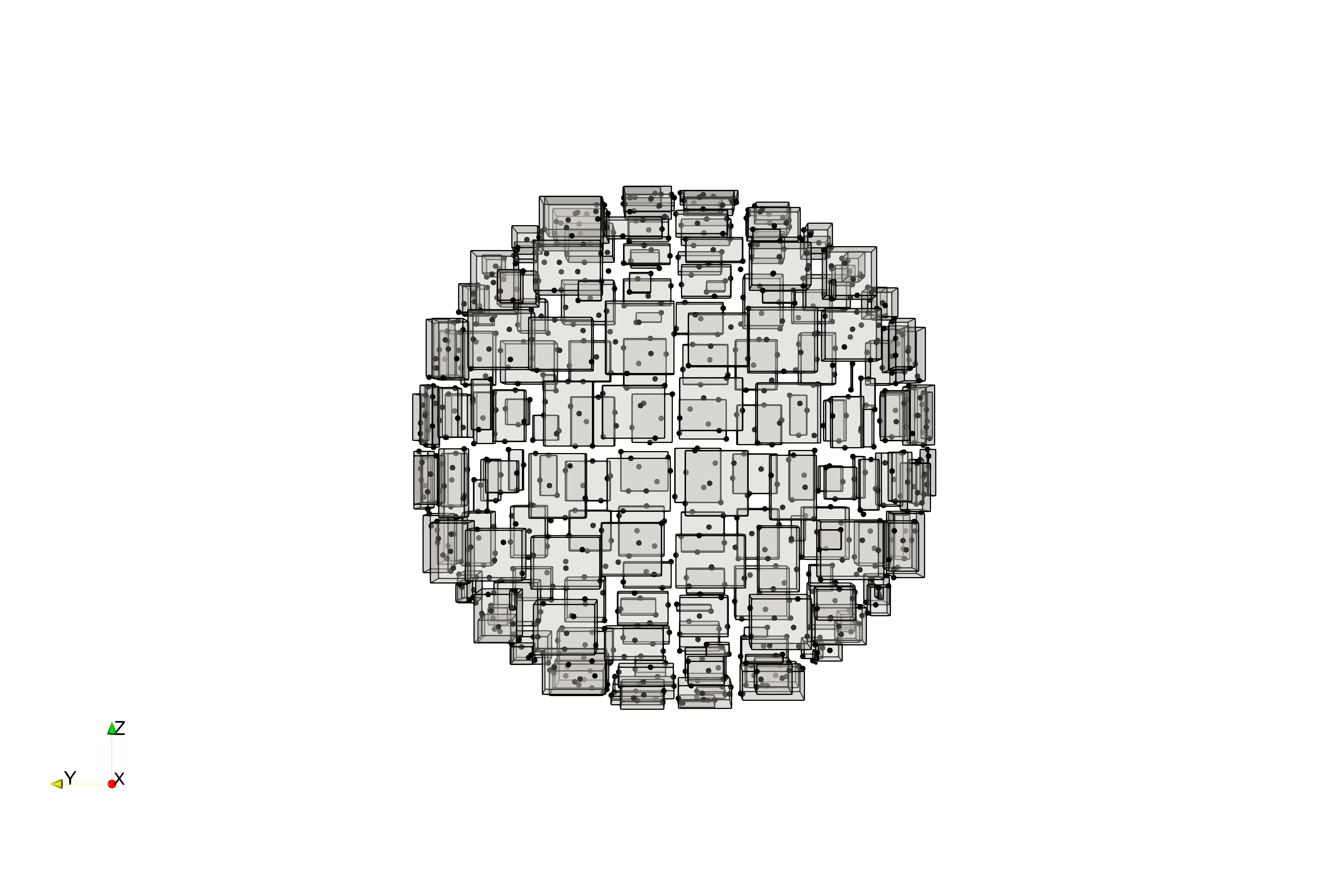}};
		\draw(7.2,0) node{\includegraphics[scale=0.05,clip,trim=1400 550 1400 550]{%
      ./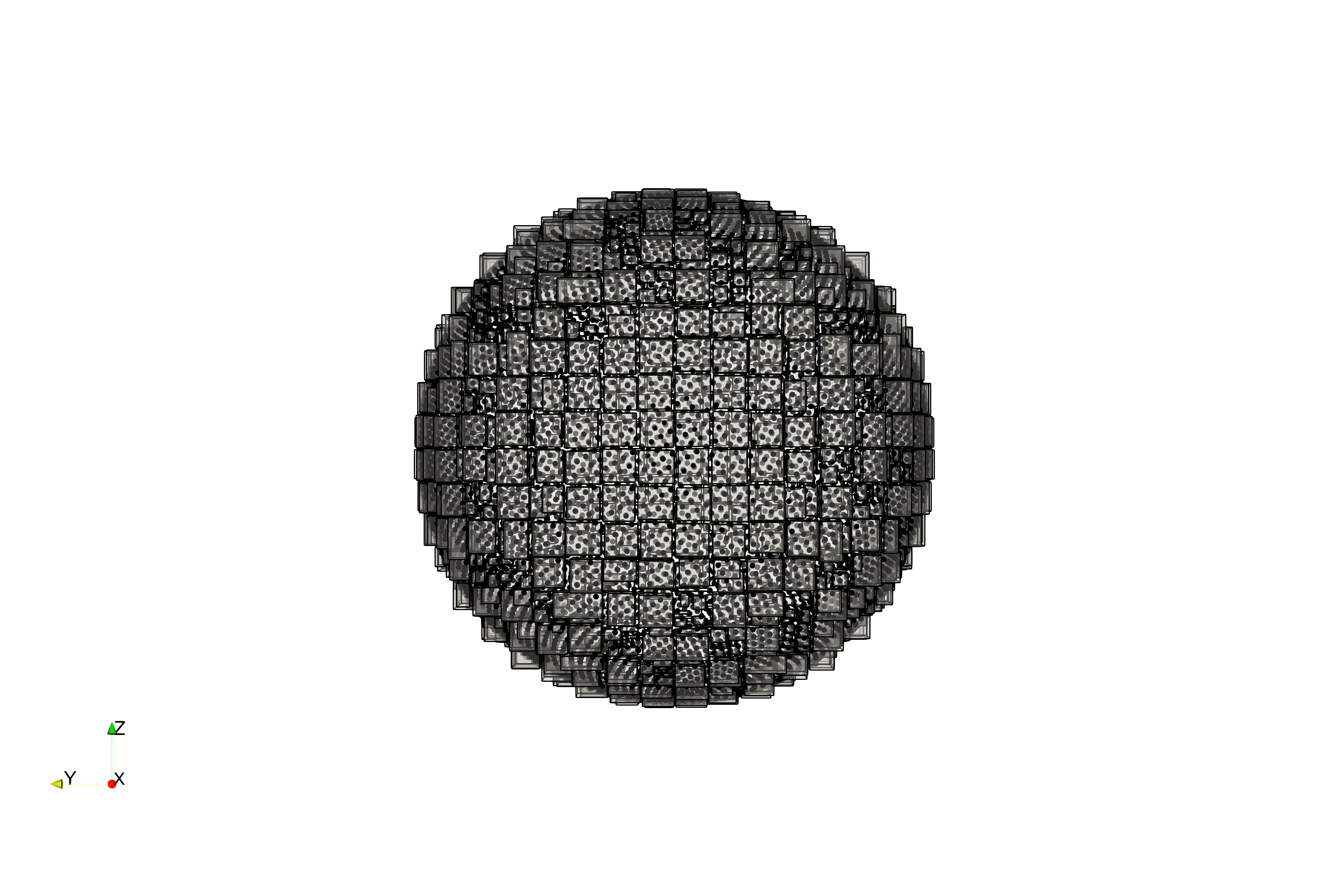}};
	\draw(10.8,0) node{\includegraphics[scale=0.05,clip,trim=1400 550 1400 550]{%
    ./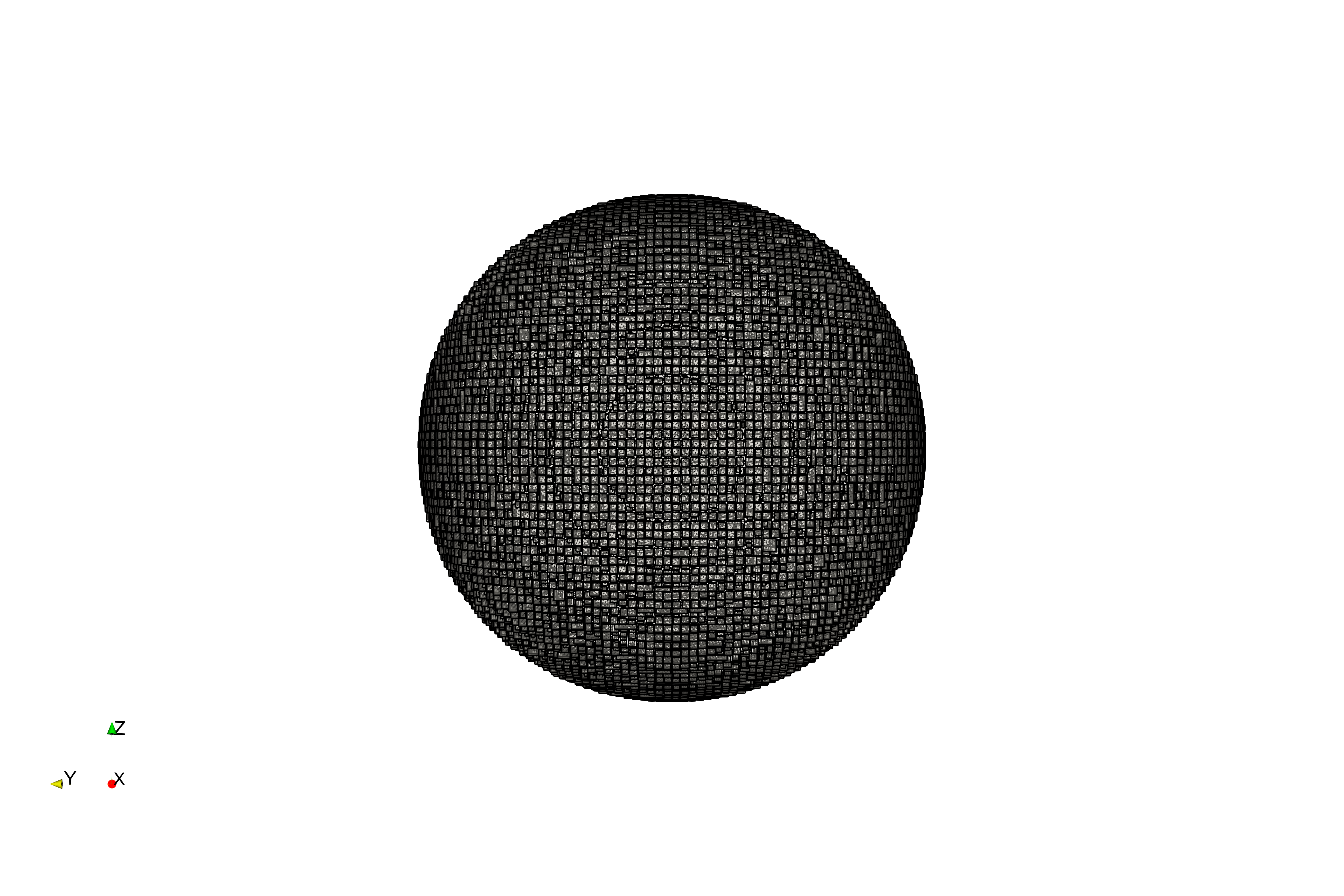}};
		\end{tikzpicture}
		\caption{\label{fig:FibonacciSphere}Fibonacci lattices for the unit sphere
      with $10^2,\ 10^3,\ 10^4,\ 10^5$
			points and bounding boxes of the leaves of the corresponding $s$-d-trees.}
	\end{center}
\end{figure}

Since the paths of the field are only H\"older continuous with coefficient
\(\alpha<1/2\), they cannot efficiently be approximated by a spectral approach.
Therefore, we perform the sampling using a Krylov subspace approximation of the
matrix square root. For \(N\leq 10^3\),
we directly use the covariance matrix, while for larger \(N\), we first perform
a samplet matrix compression thereof, see \cite{HM25} and the references
therein. We use samplets with three vanishing moments and the cutoff criterion
is set to \(\eta=10\). An a-posteriori compression is performed with a relative
threshold of \(10^{-4}\) in the Frobenius norm. For \(N=10^6\), the resulting
compression error is \(1.53\cdot 10^{-4}\) using a randomized estimator. The
compressed matrix contains 141 entries per row on average. For all \(N\), we
employ a Krylov subspace of dimension 10. Four different realizations
of the computed Gaussian field can be found in \Cref{fig:GPsphere}.
Particularly, we observe that the high frequency oscillations in the
realizations, known from the Wiener process, are captured by the taken approach.
\begin{figure}[htb]
	\begin{center}
		\begin{tikzpicture}
			\draw(0,0) node{\includegraphics[scale=0.05,clip,trim=1450 650 1450 650]{%
        ./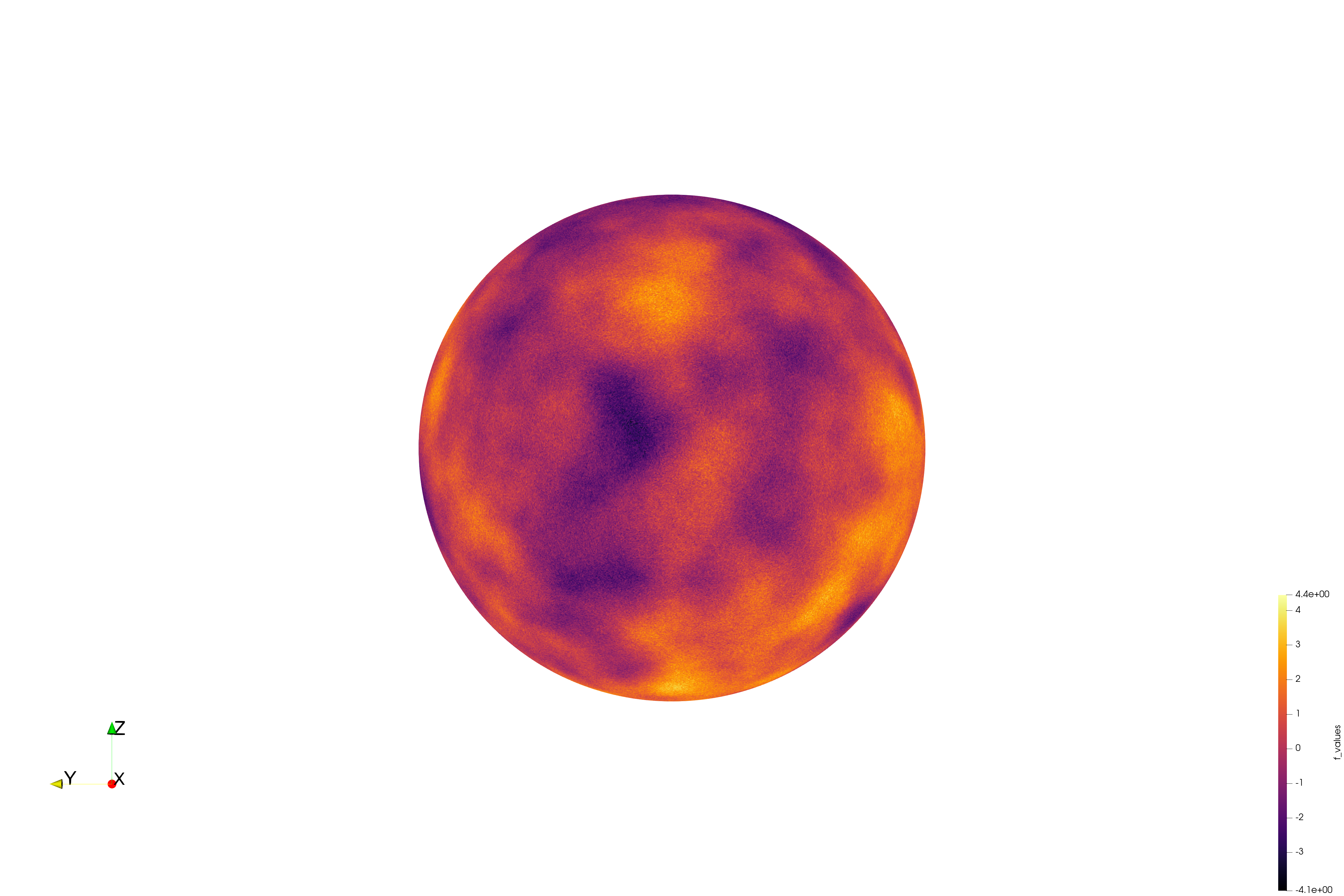}};
		\draw(3.6,0) node{\includegraphics[scale=0.05,clip,trim=1450 650 1450 650]{%
      ./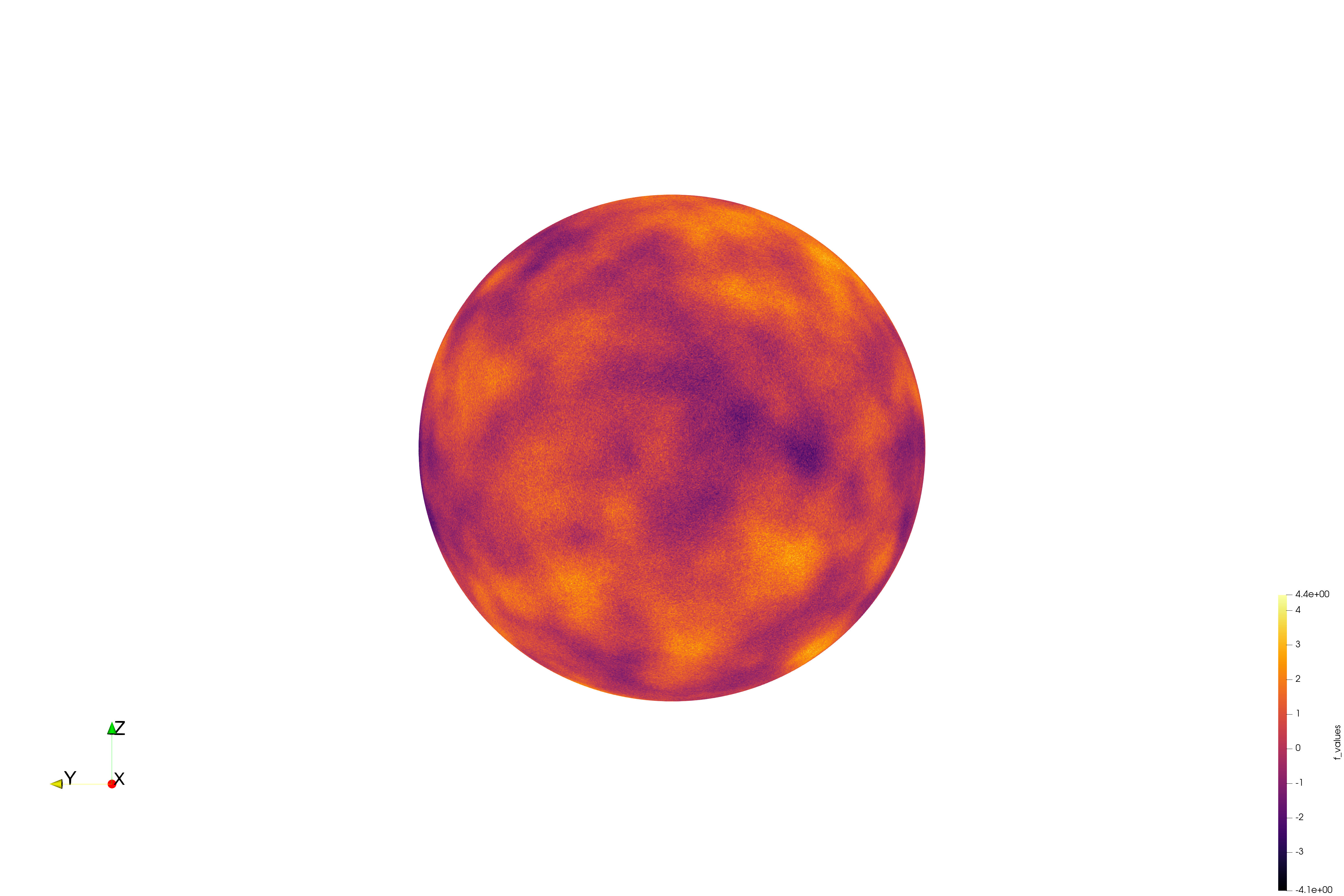}};
	\draw(7.2,0) node{\includegraphics[scale=0.05,clip,trim=1450 650 1450 650]{%
    ./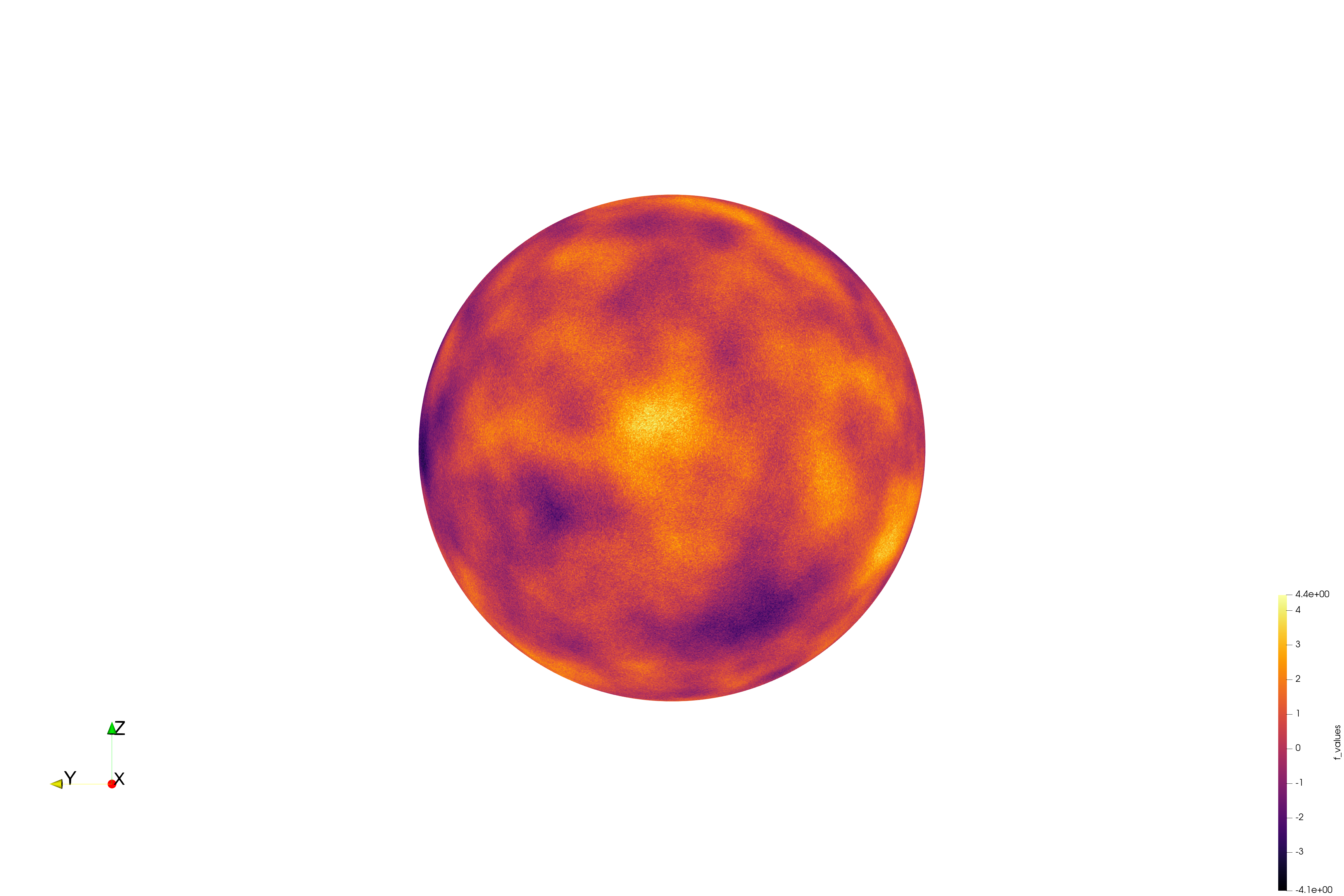}};
	\draw(10.8,0) node{\includegraphics[scale=0.05,clip,trim=1450 650 1450 650]{%
    ./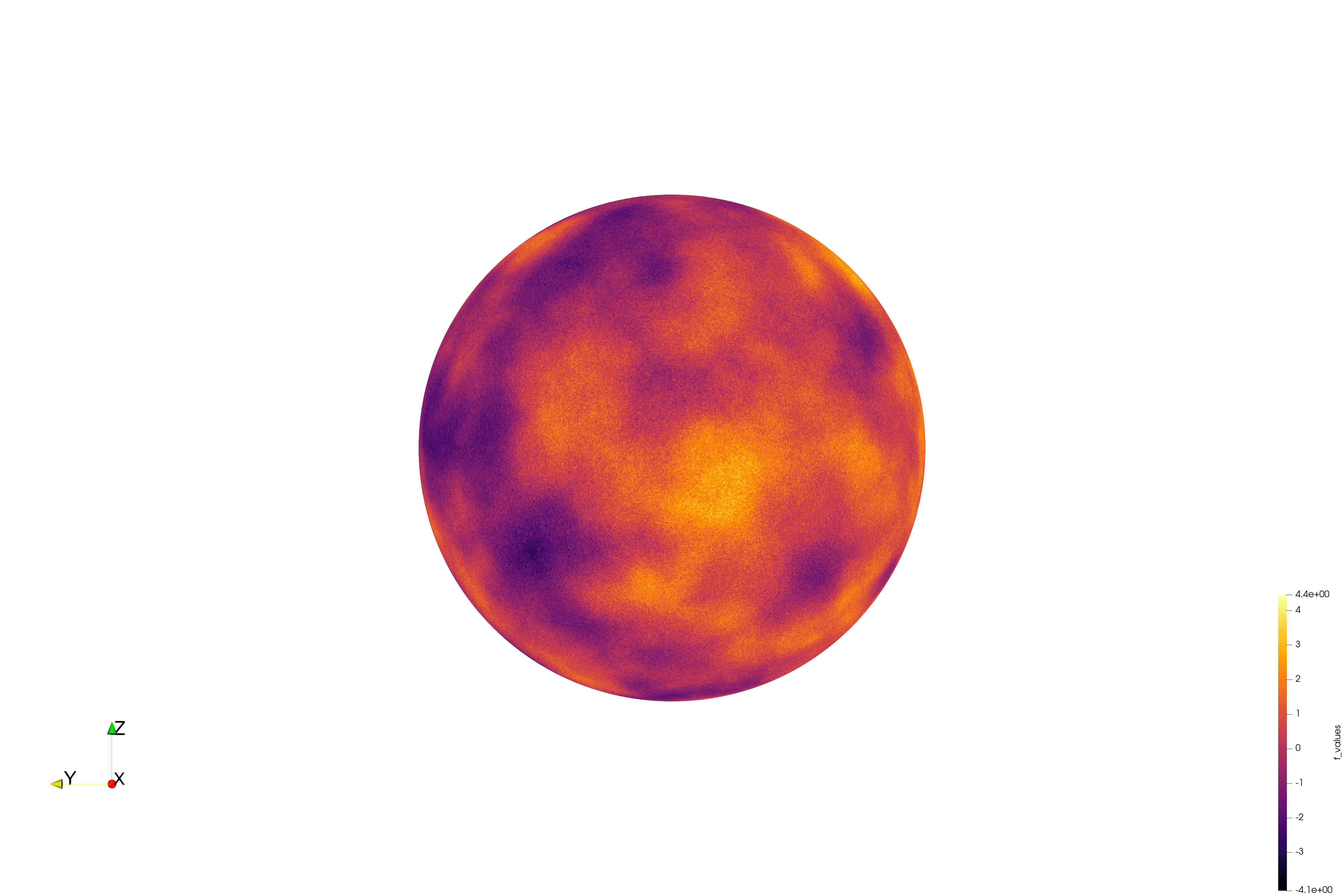}};
		\end{tikzpicture}
		\caption{\label{fig:GPsphere}Different realizations of an isotropic Gaussian
			random field on $\Sbb^2$ evaluated at a Fibonacci lattice with \(N=10^6\)
      points. All shown realizations assume values in the interval $[-4.1,4.4]$.
      Darker colors correspond to smaller values.}
	\end{center}
\end{figure}

In this example, we have \(V_\ell=\Ocal(2^{-\alpha\ell})\) as before, while the
cost is now \(C_\ell=\Ocal(2^{2\ell})\), since the sphere is a two dimensional
manifold and the \(n\)-d-tree becomes a quad-tree asymptotically. Therefore, we
decrease the number of samples in this example by the factor
\(N_\ell/N_{\ell-1}=2^{-(\alpha+2)/2}\approx 2^{-5/4}\). The convergence of the
multilevel Monte Carlo approximation to the mean of the Gaussian process
on \(\Sbb^2\) is shown in \Cref{fig:RMSES2}. 
\begin{figure}[htb]
\begin{center}
\begin{tikzpicture}
  \begin{axis}[
    width = 0.5\textwidth,
    xmin = 90,
    xmax = 1100000,
    ymin = 1e-3,
    xmode=log,
    ymode=log,
    xlabel={$N$},
    ylabel={average error},
    error bars/y dir=both,
    error bars/y explicit,
    grid=both,
  ]
    \addplot+ [
      black,
      no marks,
      mark options={draw=black,fill=black},
      error bars/.cd,
      y explicit,
  error bar style={thick},
    ] table [
      x index=0, %
      y index=1, %
      y error index=2 %
    ] {./Images/Sphere/convergenceS2_eiger.txt};
    \addlegendentry{error}
        \addplot+[
      no marks,
             dashed,
      black
    ] table[
      x index=0,
      y expr={2.3*(\thisrowno{0})^(-0.5)}
    ] {./Images/Sphere/convergenceS2_eiger.txt};
    \addlegendentry{$N^{-1/2}$}
  \end{axis}
\end{tikzpicture}
\caption{\label{fig:RMSES2}Average error of the multilevel Monte Carlo
  approximation of the mean of a Gaussian process on \(\Sbb^2\) for different
  $N$ over 10 different runs. The error bars show one standard deviation.}
\end{center}
\end{figure}
The figure shows the maximum error
averaged over 10 runs. The error bars indicate one standard deviation. Over each
of these runs, the Fibonacci lattice is kept fixed, while the interpolation
points used for the multilevel Monte Carlo method are randomized in each leaf of
the cluster tree. Nested interpolation points are again obtained by selecting
the first non-empty child and assigning its sample point to the parent cluster.
The theoretical rate of \(N^{-1/2}\) is roughly achieved in this example.

\section{Conclusion}\label{sec:Conclusion}
We have developed a data-centric framework for the approximation of
site-to-value maps. This framework covers both deterministic and
empirical data sets in metric spaces.
We have identified the discrete modulus of continuity as a natural and
computable intrinsic regularity measure for discrete data.
This makes the framework applicable to all site-to-value maps mapping
from a compact metric space to a metric space.
Our analysis shows the consistency of this regularity measure in the infinite
data limit for deterministic and empirical data sites. Motivated by these
findings, a data-intrinsic
approximation theory for site-to-value maps has been introduced. For random
site-to-value maps we have proposed multilevel approximation spaces and a
variant of the multilevel Monte Carlo method to compute statistical quantities
of interest. The obtained results can serve as a foundation for more intricate
data-driven applications. In the numerical studies, synthetic and
real-world data sets have been considered. The numerical results for the
consistency of the discrete modulus of continuity and the validation of the
discrete approximation error bounds corroborate the theoretical findings.
The multilevel Monte-Carlo approximation of the expectation of a spatial
low-regularity Gaussian random field on the sphere evaluated at a Fibonacci
lattice clearly demonstrates the practical relevance of data-centric approaches.

\section*{Acknowledgements}
JD received support from the DFG under Germany’s Excellence Strategy,
project 390685813. MM was funded by the SNSF starting grant 
``Multiresolution methods for unstructured data'' (TMSGI2\_211684).
\bibliographystyle{plain}
\bibliography{references}
\end{document}